\documentclass[11pt,reqno,tbtags]{amsart}

\usepackage{amsthm}
\usepackage{amssymb}
\usepackage{amsfonts, mathrsfs}
\usepackage{amsmath}
\usepackage[numbers, sort&compress]{natbib}
\usepackage{graphicx}
\usepackage{vmargin, enumerate}
\usepackage{setspace}
\usepackage{paralist}
\usepackage[pdftex,colorlinks=true,citecolor=linkgreen,linkcolor=linkgreen]{hyperref}
\usepackage{color}
\usepackage{wrapfig}
\usepackage{setspace}
\usepackage{bbm}

\numberwithin{equation}{section}

\newcommand{\e}[1]{{\mathbf E}\left[#1\right]}
\newcommand{\p}[1]{{\mathbf P}\left(#1\right)}
\newtheorem{thm}{Theorem}[section]
\newtheorem{lem}[thm]{Lemma}
\newtheorem{cor}[thm]{Corollary}
\newtheorem{prop}[thm]{Proposition}
\newtheorem{dfn}[thm]{Definition}

\newtheorem{rmk}{Remark}[section]

\newcommand{\nk}{\mathbf{n}_{\kappa}}

\newcommand{\sk}{\mathbf{s}_{\kappa}}
\newcommand{\ck}{c_{\kappa}}

\definecolor{darkgreen}{rgb}{0.05,0.45,0.1}
\definecolor{linkgreen}{rgb}{0,0.5,0}

 \pdfpageheight 11in
 \pdfpagewidth 8.5in
 \textwidth = 6.2 in
 \textheight = 9.1 in
 \oddsidemargin = 1.1 in
 \evensidemargin = 1.1 in
 \topmargin = 1 in
 \headheight = 0.0 in
 \headsep = 0.3 in
\parskip = 0.1in


\begin{document}

\title{Scaling limit of random forests with prescribed degree sequences}
\author{Tao Lei}
\address{Department of Mathematics and Statistics, McGill University, 805 Sherbrooke Street West,
		Montr\'eal, Qu\'ebec, H3A 0B9, Canada}
\email{tao.lei@mail.mcgill.ca}
\date{March 31, 2017} 

\subjclass[2010]{60C05}


\begin{abstract}
In this paper, we consider the random plane forest uniformly drawn from all possible plane forests with a given degree sequence. Under suitable conditions on the degree sequences, we consider the limit of a sequence of such forests with the number of vertices tends to infinity in terms of Gromov-Hausdorff-Prokhorov topology. This work falls into the general framework of showing convergence of random combinatorial structures to certain Gromov-Hausdorff scaling limits, described in terms of the {\em Brownian Continuum Random Tree} (BCRT), pioneered by the work of Aldous \cite{Aldous19911, Aldous19912, Aldous1993}. In fact we identify the limiting random object as a sequence of random real trees encoded by excursions of some first passage bridges reflected at minimum. We establish such convergence by studying the associated Lukasiewicz walk of the degree sequences. In particular, our work is closely related to and uses the results from the recent work of Broutin and Marckert \cite{BroutinMarckert2012} on scaling limit of random trees with prescribed degree sequences, and the work of Addario-Berry \cite{Addario2012} on tail bounds of the height of a random tree with prescribed degree sequence.
 \end{abstract}

\maketitle


\section{Introduction}\label{sec:intro} 

Scaling limits for finite graphs is a topic at the intersection of combinatorics and probability. In this paper, we investigate the Gromov-Hausdorff-Prokhorov convergence of random forests with prescribed degree sequence. Our work is a natural continuation of \cite{BroutinMarckert2012} where it is shown that under natural hypotheses on the degree sequences, after suitable normalization, uniformly random trees with given degree sequence converge to Brownian continuum random tree, with the size of trees going to infinity.

In a series of papers \cite{Aldous19911, Aldous19912, Aldous1993}, Aldous introduced the concept of Brownian continuum random tree (BCRT) and showed that {\em critical} Galton-Watson tree conditioned on its size has BCRT as limiting objects. Since then, many families of graphs have been shown to have BCRT or random processes derived from BCRT as their limiting objects. For example, multi-type Galton-Watson trees \cite{Miermont 2008}, unordered binary trees \cite{MarckertMiermont2011}, critical Erd\"{o}s-R\'{e}nyi  random graph \cite{AddarioberryBroutinGoldschmidt10}, random planar maps with a unique large face \cite{JansonStefansson2015}, random planar quadrangulations with a boundary \cite{Bettinelli2015}.

As in \cite{BroutinMarckert2012}, our combinatorial model is motivated by the metric structure of graphs with a prescribed degree sequence. This model was first introduced by Bender and Canfield \cite{BenderCanfield} and by Bollob\'{a}s \cite{Bollobas} in the form of the configuration model. This model can give rise to graphs with any particular (legitimate) prescribed degree sequence (including, e.g., heavy tailed degree distributions, a feature which is observed in realistic networks but is not captured by the Erd\"{o}s-R\'{e}nyi random graph model). 

Our main results, which are stated formally in Section \ref{subsection:statement of main thm}, are that, under natural assumptions on degree sequences and after suitable normalization, large uniformly random forests with given degree sequence converge in distribution to the forests coded by Brownian first passage bridge, with respect to the Gromov-Hausdorff-Prokhorov topology. In order to present these results rigorously, we need the following subsection to introduce the necessary concepts and notations involved.

\subsection{Definitions and Notation}
\subsection*{Plane trees and forests}
We recall the following definition of plane trees (as in e.g. \cite{DuquesneLegall2002}). Let $$\mathcal{U}=\bigcup\limits_{n=0}^\infty \mathbb{N}^n,$$ where $\mathbb{N}=\{1, 2, \cdots \}$ and $\mathbb{N}^0=\{\emptyset\}$. If $u=(u_1, u_2, \cdots, u_n)\in\mathcal{U}$ we write $u=u_1u_2\cdots u_n$ for short and let $|u|=n$ be the {\em generation} of $u$. If $u=u_1\cdots u_m, v=v_1\cdots v_n$, we write $uv=u_1\cdots u_mv_1\cdots v_n$ for the {\em concatenation} of $u$ and $v$.

\begin{dfn}\label{def: plane tree}
A {\em rooted plane tree} $\mathrm{T}$ is a subset of $\mathcal{U}$ satisfying the following conditions:

(i)~$\emptyset\in \mathrm{T}$;

(ii)~If $v\in \mathrm{T}$ and $v=uj$ for some $u\in\mathcal{U}$ and $j\in \mathbb{N}$, then $u\in \mathrm{T}$;

(iii)~For every $u\in \mathrm{T}$, there exists a number $k_{\mathrm{T}}(u)\ge 0$ such that $uj\in \mathrm{T}$ if and only if $1\le j\le k_{\mathrm{T}}(u)$. We call $k_{\mathrm{T}}(u)$ the {\em degree} of $u$ in $\mathrm{T}$.
\end{dfn}

We denote the lexicographic order on $\mathcal{U}$ by $<$ (e.g. $\emptyset<11<21<22$). The lexicographic order on $\mathcal{U}$ induces a total order on the set of all rooted plane trees.

We call a finite sequence of finite rooted plane trees $\mathrm{F}=(\mathrm{T}_1, \mathrm{T}_2, \cdots, \mathrm{T}_m)$ a {\em rooted plane forest}. For forest $\mathrm{F}$, we let $\mathrm{F}^{\downarrow}$ be the sequence of tree components of $\mathrm{F}$ in decreasing order of size, breaking ties lexicographically (if again tied, then as the original order of appearance in $\mathrm{F}$).

\begin{dfn}
A {\em degree sequence} is a sequence $\mathbf{s}=(s^{(i)}, i\ge 0)$ of non-negative integers with $\sum\limits_{i\ge 0} s^{(i)}<\infty$ such that $c(\mathbf{s}):= \sum\limits_{i\ge 0}(1-i)s^{(i)}>0$.
For a plane tree $\mathrm{T}$, the degree sequence $\mathbf{s}(\mathrm{T})=(s^{(i)}(\mathrm{T}), i\ge 0)$ is given by $$s^{(i)}(\mathrm{T})=|\{u\in\mathrm{T}: k_{\mathrm{T}}(u)=i\}|.$$ 

For a plane forest $\mathrm{F}=\left(\mathrm{T}_1, \cdots, \mathrm{T}_m\right)$, the degree sequence $\mathbf{s}(\mathrm{F})=(s^{(i)}(\mathrm{F}), i\ge 0)$ is given by \[s^{(i)}(\mathrm{F})=\sum\limits_{j=1}^m s^{(i)}(\mathrm{T}_j).\] 
\end{dfn}

Note that $c(\mathbf{s}(\mathrm{T}))=1$ for any plane tree $\mathrm{T}$. In general since $$\sum\limits_{i\ge 0}i s^{(i)}(\mathrm{F})=\sum\limits_{j=1}^m\sum\limits_{u\in\mathrm{T}_j}k_{\mathrm{T}_j}(u)=\sum\limits_{j=1}^m(|\mathrm{T}_j|-1)$$ and $\sum\limits_{i\ge 0}s^{(i)}(\mathrm{F})=\sum\limits_{j=1}^m|\mathrm{T}_j|$, the number of tree components in $\mathrm{F}$ is always $c(\mathbf{s}(\mathrm{F}))$. For any degree sequence $\mathbf{s}$, we adopt the notations \[n(\mathbf{s}):=\sum\limits_{i\ge 0} s^{(i)},\ \ \  \Delta(\mathbf{s}):=\max\{i: s^{(i)}>0\}.\]

Figure \ref{fig:forest with degree sequence}, below, shows a plane forest with degree sequence $\mathbf{s}=(7,2,2,1,0,\cdots)$ with $s^{(i)}=0$ for $i\ge 4$.

\begin{figure}[h]\label{fig:forest with degree sequence}
\centering
\includegraphics[scale=0.9]{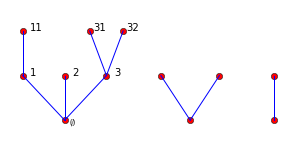}
\caption{A plane forest (with labels for the first tree) with degree sequence $\mathbf{s}=(7,2,2,1,0,\cdots)$}
\end{figure}

For any degree sequence $\mathbf{s}=(s^{(i)}, i\ge 0)$, we let $\mathrm{F}(\textbf{s})$ denote the set of all plane forests with degree sequence $\textbf{s}$. Let $\mathbb{P}_\mathbf{s}$ be the uniform measure on $\mathrm{F}(\textbf{s})$ and let $\mathbb{F}(\mathbf{s})$ be a random plane forest with law $\mathbb{P}_\mathbf{s}$.

%
%

\subsection*{First passage bridge}
We also need to recall the following definition of {\em first passage bridge} as in \cite{BCP2003}. Informally, for $\lambda>0$, the {\em first passage bridge} of unit length from 0 to $-\lambda$, denoted $F^{br}_\lambda$, is a $C[0, 1]-$valued random variable with law $$(F^{br}_{\lambda}(t), 0\le t\le 1)\overset{d}{=}(B(t),0\le t\le 1 ~|~ T_{\lambda}=1)$$ where $B$ is a standard Brownian motion and $T_{\lambda}:=\inf\{t:B(t)<-\lambda\}$ is the first passage time below level $-\lambda< 0$.

For $l\ge 0$, we write $B^{br}_l$ for the Brownian bridge of duration 1 from 0 to $-l$. As explained in Proposition 1 of \cite{FPY1992}, the law of the 
Brownian bridge $B^{br}_l$ is characterized by $B^{br}_l(1)=-l$ and the formula 
\begin{equation}\label{eqn:brownian bridge change of measure}
\e{f((B^{br}_l(t))_{0\le t\le m})}=\e{f((B(t))_{0\le t\le m})\frac{p_{1-m}(-l-B(m))}{p_1(-l)}}
\end{equation}
for all bounded measurable function $f$, and all $0\le m<1$, where $p_a$ is the Gaussian density with variance $a$ and mean 0, that is, $p_a(x)=\frac{1}{\sqrt{2\pi a}}e^{-\frac{x^2}{2a}}$. 
In a similar way the law of $F^{br}_{\lambda}$ can be defined as the law such that 
\begin{equation}\label{eqn:first passage bridge change of measure}
\e{f((F^{br}_{\lambda}(t))_{0\le t\le s})}=\e{(f(B(t))_{0\le t\le s})\frac{p'_{1-s}(-\lambda-B(s))}{p'_{1}(-\lambda)}{\mathbbm{1}}_{\{\inf\limits_{r\le s}B(r) >-\lambda\}}}
\end{equation}
for all bounded measurable functions $f$ and all $0\le s<1$ and $F^{br}_{\lambda}(1)=-\lambda$, where $p'_a$ is the derivative of $p_a$. These formulae set the finite-dimensional laws of the first passage bridge. In \cite{Bettinelli2010} (see Section 5.1 for details) it is shown that it admits a continuous version, and that $F^{br}_\lambda$ is the weak limit of $F^\epsilon_\lambda$ where $(F^\epsilon_\lambda(t), 0\le t\le 1)$ has the law of $B$ conditioned on the event $\{B(1)<-\lambda+\epsilon,~ \inf\limits_{s\le 1}B(s)>-\lambda-\epsilon\}$, hence justifying the informal conditioning definition.

\subsection*{Gromov-Hausdorff-Prokhorov distance}
We recall the definition of the Gromov-Hausdorff distance (see for example Definition 7.3.10 in \cite{Burago2001}). Let $(X, d)$ and $(X', d')$ be compact metric spaces. Then the {\em Gromov-Hausdorff distance} between $(X, d)$ and $(X', d')$ is given by $$d_{GH}((X,d),(X',d'))=\inf\limits_{\phi,\phi', Z}d_H^Z(\phi(X),\phi'(X')),$$ where the infimum is taken over all isometric embeddings $\phi:X\hookrightarrow Z$ and $\phi': X'\hookrightarrow Z$ into some common Polish metric space $(Z, d^Z)$ and $d_H^Z$ denotes the Hausdorff distance between compact subsets of $Z$, that is, $$d_H^Z(A,B)=\inf\{\epsilon>0: A\subset B^\epsilon, B\subset A^\epsilon\},$$ where $A^\epsilon$ is the {\em $\epsilon-$enlargement} of $A$: $$A^\epsilon=\{z\in Z: \inf\limits_{y\in A} d^Z(y, z)<\epsilon\}.$$

Note that strictly speaking $d_{GH}$ is not a distance since different compact metric spaces can have GH distance zero.

A {\em rooted measured metric space} $\mathcal{X}=(X, d, \emptyset, \mu)$ is a metric space $(X, d)$ with a distinguished element $\emptyset\in X$ and a finite Borel measure $\mu$. Note that the definitions in this subsection work in more general settings, e.g. $\mu$ could be a boundedly finite Borel measure (see \cite{AbrahamDH2013}), but for the purpose of this paper, finite measure $\mu$ is enough.

Let $\mathcal{X}=(X, d, \emptyset, \mu)$ and 
${\mathcal{X}}'=(X', d', \emptyset',\mu')$ be two compact rooted measured metric spaces, they are {\em GHP-isometric} if there exists an isometric one-to-one map $\Phi: X\rightarrow X'$ such that $\Phi(\emptyset)=\emptyset'$ and $\Phi_{\ast}\mu=\mu'$ where $\Phi_{\ast}\mu$ is the {\em push forward} of measure $\mu$ to $(X', d')$, that is, $\Phi_{\ast}\mu(A)=\mu(\Phi^{-1}(A))$ for $A\in\mathcal{B}(X')$. In this case, call $\Phi$ a {\em GHP-isometry}.

Suppose both $\mathcal{X}$ and $\mathcal{X'}$ are compact, then define the Gromov-Hausdorff-Prokhorov distance as:
$$d_{GHP}(\mathcal{X},\mathcal{X'})=\inf\limits_{\Phi, \Phi', Z}(d^Z(\Phi(\emptyset), \Phi'(\emptyset'))+d_H^Z(\Phi(X), \Phi'(X'))+d_P^Z(\Phi_\ast\mu, \Phi'_\ast\mu'))$$ where the infimum is taken over all isometric embeddings $\Phi: X\hookrightarrow Z$ and $\Phi': X'\hookrightarrow Z$ into some common Polish metric space $(Z, d^Z)$, and $d_P^Z$ denotes the Prokhorov distance between finite Borel measures on $Z$, that is, \[d_P^Z(\mu, \nu)=\inf\{\epsilon>0: \mu(A)\le\nu(A^\epsilon)+\epsilon, \nu(A)\le \mu(A^\epsilon)+\epsilon \mbox{ for any closed set }A\}.\]
Let $\mathbb{K}$ denote the set of GHP-isometry classes of compact rooted measured metric spaces and we identify $\mathcal{X}$ with its GHP-isometry class. We have the following results from \cite{AbrahamDH2013}:
\begin{thm}[Theorem 2.5 in \cite{AbrahamDH2013}]
The function $d_{GHP}$ defines a metric on $\mathbb{K}$ and the space $(\mathbb{K}, d_{GHP})$ is a Polish metric space.
\end{thm}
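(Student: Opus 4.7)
The proof plan splits into two parts: verifying the metric axioms for $d_{GHP}$, and then establishing that $(\mathbb{K},d_{GHP})$ is separable and complete. Symmetry and non-negativity of $d_{GHP}$ are immediate from the definition. For the triangle inequality I would use the classical gluing trick: given compact rooted measured metric spaces $\mathcal{X},\mathcal{Y},\mathcal{Z}$ and isometric embeddings $\Phi_1,\Psi_1$ of $\mathcal{X},\mathcal{Y}$ into a Polish space $W_1$ and $\Psi_2,\Phi_2$ of $\mathcal{Y},\mathcal{Z}$ into $W_2$ that are $\epsilon$-optimal for $d_{GHP}(\mathcal{X},\mathcal{Y})$ and $d_{GHP}(\mathcal{Y},\mathcal{Z})$, I would form the amalgamated space $Z=(W_1\sqcup W_2)/\sim$ by identifying $\Psi_1(y)$ with $\Psi_2(y)$ and equipping it with the largest metric restricting to those on $W_1$ and $W_2$; the triangle inequalities on $Z$ for the distance between roots, for the Hausdorff distance and for the Prokhorov distance then give the bound. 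The main subtlety is the separation axiom: if $d_{GHP}(\mathcal{X},\mathcal{X}')=0$, I would pass to the equivalent characterization via correspondences $R\subset X\times X'$ and couplings $\pi$ of $(\mu,\mu')$ whose distortion and mass defect are small, then use compactness of $X\times X'$ together with Prokhorov's theorem on couplings to extract a subsequential limit correspondence and coupling. The limit correspondence has zero distortion, and together with the limit coupling it yields the graph of a GHP-isometry between $\mathcal{X}$ and $\mathcal{X}'$.

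Separability follows by a standard approximation argument: any compact rooted measured metric space $\mathcal{X}=(X,d,\emptyset,\mu)$ is approximated to arbitrary precision in $d_{GHP}$ by a finite rooted measured metric space built on an $\epsilon$-net of $X$ containing the root, with rational inter-point distances and rational atomic weights approximating $\mu$ in Prokhorov distance. The countable family of such rational finite objects is dense in $\mathbb{K}$.

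Completeness is the main obstacle. Given a Cauchy sequence $(\mathcal{X}_n)$ in $(\mathbb{K},d_{GHP})$, I would extract a subsequence with $d_{GHP}(\mathcal{X}_{n_k},\mathcal{X}_{n_{k+1}})<2^{-k}$ and iteratively embed the $\mathcal{X}_{n_k}$ into a single Polish ambient space $Z$—either by iterating the amalgamation construction above or by invoking Fr\'echet's embedding of every separable metric space into $\ell^\infty$—so that in $Z$ the successive images are Hausdorff-Cauchy, the pushforward measures are Prokhorov-Cauchy, and the roots form a Cauchy sequence. The delicate point is coordinating the geometric and probabilistic pieces across infinitely many gluings, because the approximate embeddings witnessing $d_{GHP}(\mathcal{X}_{n_k},\mathcal{X}_{n_{k+1}})$ do not a priori compose. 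Using the correspondence-plus-coupling reformulation of $d_{GHP}$ at each step makes this coordinated propagation transparent. Taking the closure of the union of the images in $Z$ gives a compact set $X_\infty$, the Prokhorov limit of the pushforward measures gives a finite Borel measure $\mu_\infty$ on $X_\infty$, the roots converge to some $\emptyset_\infty\in X_\infty$, and the GHP-isometry class of $(X_\infty,d^Z,\emptyset_\infty,\mu_\infty)$ is the desired limit.
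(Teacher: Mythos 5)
The paper does not actually prove this statement: it is imported wholesale, with citation, as Theorem 2.5 of \cite{AbrahamDH2013}, so your proposal can only be measured against the argument in that reference. Your outline follows essentially that standard route: amalgamating two near-optimal ambient spaces along the common factor (with the maximal compatible metric on the quotient) for the triangle inequality; a compactness/extraction argument with correspondences and couplings to show that $d_{GHP}(\mathcal{X},\mathcal{X}')=0$ forces a root- and measure-preserving isometry; finite rooted spaces with rational distances and rational atomic weights for separability; and an iterated gluing of successive near-optimal embeddings into one Polish ambient space $Z$ for completeness. All of these steps are sound and are the ones used in the literature.

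One step, as written, would fail: in the completeness argument you take $X_\infty$ to be the closure of the union of all images $K_k=\Phi_k(X_{n_k})$ in $Z$. That set is compact (the $K_k$ are Hausdorff--Cauchy, so the union is totally bounded), but it need not be the Hausdorff limit of the $K_k$: if the first image contains a point lying at some fixed positive distance (smaller than $2^{-1}$, so still compatible with the Cauchy condition) from all later images, then $d_H^Z(K_k,\overline{\bigcup_m K_m})$ stays bounded away from $0$, so the space you build is not the $d_{GHP}$-limit of the sequence. The standard fix is to take instead $X_\infty=\bigcap_{N\ge 1}\overline{\bigcup_{k\ge N}K_k}$, i.e.\ the Hausdorff limit of the $K_k$ in the complete space $Z$; the Prokhorov limit $\mu_\infty$ of the pushforward measures is then automatically supported on $X_\infty$, the roots converge to a point $\emptyset_\infty\in X_\infty$, and $d_{GHP}\bigl(\mathcal{X}_{n_k},(X_\infty,d^Z,\emptyset_\infty,\mu_\infty)\bigr)\to 0$, which by the Cauchy property gives convergence of the whole sequence. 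With that correction your plan is a faithful reconstruction of the proof of Theorem 2.5 in \cite{AbrahamDH2013}.
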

We next define a distance between sequences of rooted measured metric spaces. For $\mathbf{X}=(\mathcal{X}_j, j\ge 1), \mathbf{X}'=(\mathcal{X}'_j, j\ge 1)$ in $\mathbb{K}^{\mathbb{N}}$, we let
\[d_{GHP}^\infty(\mathbf{X}, \mathbf{X}')=\sup\limits_{j\ge 1} d_{GHP}(\mathcal{X}_j, \mathcal{X}'_j).\]
If $\mathbf{X}\in \mathbb{K}^n$ for some $n\in\mathbb{N}$, in order to view $\mathbf{X}$ as a member of $\mathbb{K}^\mathbb{N}$, we append to $\mathbf{X}$ an infinite sequence of zero metric spaces $\mathcal{Z}$. Here $\mathcal{Z}$ is the rooted measured metric space consisting of a single point with measure 0. Let $\mathbf{Z}=(\mathcal{Z}, \mathcal{Z},\cdots)$ and 
\[\mathbb{L}_\infty=\{\mathbf{X}\in \mathbb{K}^\mathbb{N}: \limsup\limits_{j\rightarrow\infty}d_{GHP}(\mathcal{X}_j,\mathcal{Z})=0\}.\]
By definition of GHP distance it is not hard to see that $d_{GHP}(\mathcal{X},\mathcal{Z})=\frac{\mathrm{diam}(X)}{2}+\mu(X)$, hence $\mathbf{X}\in\mathbb{L}_\infty$ if and only if $\limsup\limits_{j\rightarrow\infty}\left(\mathrm{diam}(X_j)+\mu_j(X_j)\right)=0$. It is likewise straightforward to show that $(\mathbb{L}_\infty, d_{GHP}^\infty)$ is a complete separable metric space.

%
%
%
%

\subsection*{Real trees}
Next we briefly recall the concepts of real trees and real trees coded by continuous functions. A more lengthy presentation about the probabilistic aspects of real trees can be found in \cite{Evans2008, Le Gall2005}.

\begin{dfn}
A compact metric space $(T, d)$ is a {\em real tree} if the following hold for every $a, b\in T$:

(i) There is a unique isometric map $f_{a,b}$ from $[0, d(a,b)]$ into $T$ such that $f_{a,b}(0)=a$ and $f_{a,b}(d(a,b))=b$.

(ii) If $q$ is a continuous injective map from $[0,1]$ into $T$, such that $q(0)=a$ and $q(1)=b$, we have $q([0,1])=f_{a,b}([0,d(a,b)])$.
\end{dfn}

A real tree $(T,d)$ is {\em rooted} if there is a distinguished vertex (the root) $\emptyset\in T$ and we denote a rooted real tree by $(T,d,\emptyset)$. If there is a finite Borel measure $\mu$ on $T$, then $(T, d, \emptyset, \mu)$ is a measured rooted real tree.

Next we show a way of constructing real trees from continuous functions. Let $g:[0,\infty)\rightarrow [0,\infty)$ be a continuous function with compact support and such that $g(0)=0$. For every $s, t\ge 0$, let $$d^\circ_g(s,t)=g(s)+g(t)-2m_g(s,t)$$ where $$m_g(s,t)=\min\limits_{s\wedge t\le r\le s\vee t} g(r).$$ The function $d^\circ_g$ is a pseudometric on $[0,\infty)$. Define an equivalence relation $\sim$ on $[0,\infty)$ by setting $s\sim t$ iff $d^\circ_g(s,t)=0$. Then let $T_g=[0,\infty)/\sim$ and let $d_g$ be the induced distance on $T_g$. Then $(T_g, d_g)$ is a real tree (see, e.g. Theorem 2.2 in \cite{Le Gall2005}).

To get an intuition of this construction, for a rooted plane tree $\mathrm{T}$ with graph distance $d_{gr}$, let $\hat{\mathrm{T}}$ be the metric space obtained from $\mathrm{T}$ by viewing each edge as an isometric copy of the unit interval $[0,1]$, and imagine a particle exploring the tree, starting from the root and moving at unit speed. Each time the particle leaves a vertex $u$, it moves to the lexicographically next unvisited child of $u$, if such a child exists; otherwise it moves to the parent of $u$. The exploration concludes the moment the particle has visited all vertices and returned to the root. Let $C:[0, 2(|\mathrm{T}|-1)]\rightarrow [0,\infty)$ be such that $C(t)$ equals to the graph distance between the particle and the root at time $t$. $C$ is called the {\em contour function} of $\mathrm{T}$. Then the metric space $\mathcal{T}_C$ constructed from $C$ is isometric to $\hat{\mathrm{T}}$.

Let $\emptyset_g$ denote the equivalence class of 0. Let $p_g$ be the canonical projection from $[0,\infty)$ to $T_g$ and $\sigma_g=sup\{t: g(t)>0\}$. Let $\textbf{m}_g$ be the push forward of the Lebesgue measure on $[0, \sigma_g]$ ($(\sigma_g,\infty)$ has measure 0) by $p_g$. Then $\mathcal{T}_g=(T_g, d_g, \emptyset_g, \textbf{m}_g)$ is a compact measured rooted real tree. In particular, $\mathcal{T}_g\in\mathbb{K}$. Let $\mathbf{e}$ denote the standard Brownian excursion, then $\mathcal{T}_{\mathbf{e}}$ is called the {\em Brownian continuum random tree} (BCRT for short).


\subsection{Statement of main theorems}\label{subsection:statement of main thm}
For $c>0$, let $c\mathbf{e}\in C[0,\infty)$ denote the Brownian excursion of length $c$, that is $(c\mathbf{e})(s):=\sqrt{c}\mathbf{e}(\frac{s}{c} \wedge 1)$ for $s\ge 0$. For any probability distribution $\textbf{p}=(p^{(i)}, i\ge 0)$ on $\mathbb{N}$, let $\mu(\textbf{p})=\sum\limits_{i\ge 0}i p^{(i)}$ and $\sigma^2(\textbf{p})=\sum\limits_{i\ge 0} i^2 p^{(i)}$.

In this paper we consider a sequence of degree sequences $(\mathbf{s}_\kappa, \kappa\in\mathbb{N})$, where $\mathbf{s}_\kappa=(s_\kappa^{(i)}, i\ge 0)$. We assume $\nk:=\sum\limits_{i\ge 0}s_\kappa^{(i)}\rightarrow\infty$ and let $\mathbb{F}_\kappa:=\mathbb{F}(\sk)$ and write $\mathbb{F}_\kappa^{\downarrow}=(\mathbb{T}_{\kappa, l},~ l\ge 1)$. We write $\textbf{p}_{\kappa}=(p_\kappa^{(i)}, i\ge 0):=(\frac{s^{(i)}_\kappa}{\nk}, i\ge 0)$. For $\mathbb{F}_\kappa^{\downarrow}=(\mathbb{T}_{\kappa, l},~ l\ge 1)$, let $\mathcal{T}_{\kappa,l}$ denote the measured rooted real tree $(\mathbb{T}_{\kappa, l}, \frac{\sigma_\kappa}{2\nk^{1/2}}d_{gr},\emptyset_{\kappa, l},\mu_{\kappa,l})$ where $\sigma_\kappa=\sigma(\textbf{p}_\kappa)$ and $\mu_{\kappa,l}$ denotes the uniform measure putting mass $\frac{1}{\nk}$ on each vertex of $\mathbb{T}_{\kappa, l}$. Let $\mathcal{F}_\kappa^\downarrow=(\mathcal{T}_{\kappa,l}, l\ge 1)$. Let $\Delta_{\kappa}:=\max\{i: s^{(i)}_{\kappa}>0\}$. We are now prepared to state our main theorems.

\begin{thm}\label{thm:forest of trees}
Suppose that there exists a distribution $\textbf{p}=(p^{(i)}, i\ge 0)$ on $\mathbb{N}$ with $p^{(1)}<1$ such that $\textbf{p}_\kappa$ converges to $\textbf{p}$ coordinatewise. Suppose also that $\sigma(\textbf{p}_\kappa)\rightarrow\sigma(\textbf{p})\in(0,\infty)$. If $\frac{c(\mathbf{s}_\kappa)}{\sigma(\mathbf{p}_\kappa)\nk^{1/2}}\rightarrow \lambda\in(0,\infty)$, then 
\begin{equation}\label{eqn:main_thm_statement}
\mathcal{F}_\kappa^\downarrow \xrightarrow{\mbox{d}}(\mathcal{T}_{\gamma_l}, l\ge 1) \mbox{ as } \kappa\rightarrow\infty,
\end{equation}
with respect to the product topology for $d_{GHP}$ where $(\gamma_l, l\ge 1)$ are the excursions of the process $(F_{\lambda}(s)-\inf\limits_{s'\in(0,s)}  F_{\lambda}(s'))_{0\le s\le 1}$, listed in decreasing order of length. 
\end{thm}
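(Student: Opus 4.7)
The plan is to encode each forest $\mathbb{F}_\kappa$ by its Lukasiewicz walk $W_\kappa$, the lattice walk of length $\nk$ starting at $0$ whose $(j{+}1)$-st jump equals $k(u_j)-1$, where $u_j$ is the $j$-th vertex in depth-first order across all trees. The walk first hits $-c(\sk)$ at time $\nk$, the tree components of $\mathbb{F}_\kappa$ are in bijection with the excursions of $W_\kappa$ strictly above its running minimum, and by the cycle lemma the plane forests in $\mathrm{F}(\sk)$ are in bijection with those arrangements of the jump multiset $\{i-1\text{ with multiplicity }s_\kappa^{(i)}\}$ whose associated walk first hits $-c(\sk)$ at time $\nk$. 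Consequently, under $\mathbb{P}_{\sk}$, $W_\kappa$ is uniform over such ``valid'' arrangements.

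The first main step is to prove that the rescaled walk $\tilde W_\kappa(t):=W_\kappa(\lfloor \nk t\rfloor)/(\sigma_\kappa \nk^{1/2})$ converges in distribution in $C[0,1]$ to the first passage bridge $F^{br}_\lambda$. By exchangeability, $W_\kappa$ may be realized as a random walk with i.i.d.\ jumps of distribution $\mathbf{p}_\kappa$-shifted-by-$-1$ conditioned to first hit $-c(\sk)$ at time $\nk$; the unconditioned walk has step mean $\mu(\mathbf{p}_\kappa)-1\to 0$ and step variance $\to \sigma(\mathbf{p})^2$, so its natural scaling limit is standard Brownian motion. A Donsker-type invariance principle together with a local central limit argument controlling the first-passage conditioning event (the derivative $p'_{1-s}$ appearing in \eqref{eqn:first passage bridge change of measure}) then identifies the limit as exactly $F^{br}_\lambda$ from $0$ to $-\lambda$ of length $1$.

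Next I would translate walk convergence into convergence of $\mathcal{F}_\kappa^{\downarrow}$. The decreasing rearrangement of excursion lengths of a continuous path is a continuous functional at $F^{br}_\lambda$ in the appropriate sense, so $(|\mathbb{T}_{\kappa,l}|/\nk,\ l\ge 1)$ converges to the decreasing sequence of excursion lengths of $(F^{br}_\lambda(s)-\inf_{s'\le s}F^{br}_\lambda(s'))$. Conditionally on the excursion partition of $W_\kappa$, each tree $\mathbb{T}_{\kappa,l}$ is uniform over plane trees with its realized degree sequence $\mathbf{s}_{\kappa,l}$ (the multiplicities of the jumps falling inside the $l$-th excursion), and a further exchangeability argument identifies these jumps as a uniform sample without replacement from the full multiset. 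On the event that $|\mathbb{T}_{\kappa,l}|/\nk$ is bounded away from zero, the renormalized degree distribution and variance of $\mathbf{s}_{\kappa,l}$ still satisfy the hypotheses of Broutin--Marckert \cite{BroutinMarckert2012}, which, applied conditionally, gives $\mathcal{T}_{\kappa,l}\xrightarrow{\mathrm{d}}\mathcal{T}_{\gamma_l}$ in $d_{GHP}$ for each fixed $l$.

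The main obstacle is upgrading tree-by-tree convergence to joint convergence of the whole sequence in the product topology for $d_{GHP}$, and in particular ensuring that the $\downarrow$-ranking on both sides matches in the limit and that small excursions cannot produce GHP-large trees. For this I would invoke Addario-Berry's tail bound on the height of a random tree with prescribed degree sequence \cite{Addario2012}, applied conditionally to each excursion: this forces $\operatorname{diam}(\mathcal{T}_{\kappa,l})$ to be bounded by a constant multiple of $(|\mathbb{T}_{\kappa,l}|/\nk)^{1/2}$ with overwhelming probability, uniformly in $l$. Combined with the joint excursion-length convergence from the previous step, this pins down the ordering in $\mathbb{F}_\kappa^{\downarrow}$ and drives the tail of small trees to zero in $d_{GHP}$, yielding the stated product-topology convergence.
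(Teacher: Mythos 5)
There is a genuine gap at the heart of your first step: the identification of the law of the Lukasiewicz walk. Under $\mathbb{P}_{\mathbf{s}_\kappa}$ the walk is uniform on the set of arrangements of the \emph{fixed} jump multiset $d(\mathbf{s}_\kappa)$ that first hit $-c(\mathbf{s}_\kappa)$ at time $\mathbf{n}_\kappa$; this is \emph{not} the same as an i.i.d.\ walk with step law $\mathbf{p}_\kappa$ shifted by $-1$ conditioned only on the first-passage event. Under your conditioned i.i.d.\ law the empirical step counts are random (e.g.\ the number of $-1$ steps is not exactly $s^{(0)}_\kappa$), so the two laws differ; to recover the model you must additionally condition on the empirical counts equalling $\mathbf{s}_\kappa$ exactly, and that conditioning cannot be absorbed by a local CLT controlling only the endpoint/first-passage event (it is a conditioning on a superpolynomially small, non-local event, and it is exactly the feature that distinguishes the prescribed-degree-sequence model from conditioned Galton--Watson trees). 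The paper avoids this entirely: it first proves that the walk built from a \emph{uniformly permuted} copy of $d(\mathbf{s}_\kappa)$ converges to the Brownian bridge $B^{br}_\lambda$ via an invariance principle for exchangeable increments (Corollary 20.10(a) of Aldous, Theorem \ref{thm:aldous_exchangeability}), and then transfers this to the forest walk through the $n$-to-$1$ cyclic-shift map of Section \ref{sec:nto1} together with the Bertoin--Chaumont--Pitman identity (Theorem \ref{thm: BCP 2003}) and an almost-sure continuity argument for the shift at a uniform level (Lemmas \ref{lem: local min}--\ref{lem: continuity of rotation}). Your Donsker-plus-LCLT route would have to be replaced by essentially this exchangeability-plus-rotation argument.

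A second, smaller point: for the statement at hand (convergence in the \emph{product} topology for $d_{GHP}$) no control of the small trees is needed at all; once one has the joint convergence of the first $j$ ranked sizes and trees (Proposition \ref{prop: first j converge}), the product-topology convergence follows by Skorokhod representation, and the ranking is already pinned down by the ranked-excursion-length convergence (Proposition \ref{prop:length convergence}, via the Chassaing--Louchard criterion and the a.s.\ distinctness properties of $F^{br}_\lambda$). The uniform diameter control you invoke is only required for the stronger $(\mathbb{L}_\infty, d^\infty_{GHP})$ statement of Theorem \ref{thm:forest of trees_2}, which assumes in addition $\Delta_\kappa=O(\mathbf{n}_\kappa^{(1-\epsilon)/2})$; moreover, Addario-Berry's bound does not directly give $\mathrm{diam}(\mathcal{T}_{\kappa,l})\lesssim (|\mathbb{T}_{\kappa,l}|/\mathbf{n}_\kappa)^{1/2}$ uniformly in $l$, since the bound is in terms of $\sigma^2$ of the tree's own (random) degree sequence, which must first be controlled by a sampling-without-replacement concentration argument (Propositions \ref{prop: concentration on long intervals} and \ref{prop:tree second momonet concentration alpha level}), and even then one only gets the weaker height bound $\beta^{1/8}\mathbf{n}_\kappa^{1/2}$ for trees of size at most $\beta\mathbf{n}_\kappa$ as in Proposition \ref{prop:key_prop}. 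Similarly, your appeal to Broutin--Marckert for each large tree is the right idea, but verifying its hypotheses for the random degree sequences $\mathbf{s}_{\kappa,l}$ (empirical proportions, $\sigma$, and maximum degree) requires the concentration estimates of Section \ref{sec:martingale stuff} and Lemma \ref{lem: max degree ok}, not just a one-line exchangeability remark.
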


\begin{thm}\label{thm:forest of trees_2}
Under the conditions of Theorem \ref{thm:forest of trees}, suppose additionally that there exists $\epsilon>0$ such that $\Delta_\kappa=O(\nk^{\frac{1-\epsilon}{2}})$. Then the convergence (\ref{eqn:main_thm_statement}) holds in $(\mathbb{L}_\infty, d_{GHP}^\infty)$.
\end{thm}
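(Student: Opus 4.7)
The plan is to upgrade the product-topology convergence of Theorem~\ref{thm:forest of trees} to convergence in $(\mathbb{L}_\infty, d_{GHP}^\infty)$ by establishing uniform tail control on the smaller tree components of $\mathcal{F}_\kappa^\downarrow$. Invoking the Skorokhod representation theorem, I would assume that $\mathcal{F}_\kappa^\downarrow \to (\mathcal{T}_{\gamma_l})_{l \geq 1}$ almost surely in the product topology. Since $d_{GHP}(\mathcal{X},\mathcal{Z}) = \mathrm{diam}(X)/2 + \mu(X)$, the triangle inequality reduces the task to showing that, for every $\delta > 0$,
\begin{equation*}
\lim_{L \to \infty} \limsup_{\kappa \to \infty} \mathbb{P}\!\left( \sup_{l \geq L} \bigl( \mathrm{diam}(\mathcal{T}_{\kappa, l}) + \mu_{\kappa, l}(\mathbb{T}_{\kappa,l}) \bigr) > \delta \right) = 0,
\end{equation*}
together with the corresponding statement for the limiting sequence. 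The latter holds because the excursion lengths of the reflected first passage bridge $F_\lambda - \inf_{[0,\cdot]}F_\lambda$ sum to $1$ and any sufficiently short excursion has small supremum by uniform continuity, so $(\mathcal{T}_{\gamma_l})_{l \geq 1} \in \mathbb{L}_\infty$ almost surely.

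The mass part is the easier one. The quantity $\mu_{\kappa,l}(\mathbb{T}_{\kappa, l}) = |\mathbb{T}_{\kappa, l}|/\nk$ equals the length of the $l$-th longest excursion of the reflected normalized Lukasiewicz walk associated with $\mathbb{F}_\kappa$. Convergence of this walk to the reflected first passage bridge---the engine underlying Theorem~\ref{thm:forest of trees}---combined with the fact that the $L$-th longest excursion length is a continuous functional of the reflected path, implies $\sup_{l \geq L} \mu_{\kappa,l}(\mathbb{T}_{\kappa,l}) = |\mathbb{T}_{\kappa,L}|/\nk \to 0$ in probability as $L \to \infty$ uniformly in $\kappa$.

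The diameter part is the main obstacle, and is where the hypothesis $\Delta_\kappa = O(\nk^{(1-\epsilon)/2})$ is used. Conditional on the partition of the degree multiset $\mathbf{s}_\kappa$ among the tree components, the trees $(\mathbb{T}_{\kappa, l})_{l \geq 1}$ are independent uniform plane trees with their respective prescribed degree sequences, each of maximum degree bounded by $\Delta_\kappa$. The sub-Gaussian height estimate of \cite{Addario2012} then gives a bound of the form
\begin{equation*}
\mathbb{P}\bigl( \mathrm{height}(\mathbb{T}_{\kappa,l}) \geq h \,\big|\, |\mathbb{T}_{\kappa,l}| = N \bigr) \,\leq\, C \exp\!\left(-c\, h^2/N\right),
\end{equation*}
valid for $h$ up to order $\sqrt{\nk}$ under the hypothesis on $\Delta_\kappa$. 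Choosing $h$ to be a constant multiple of $\sqrt{\nk}$ (the scale needed to produce rescaled diameter at most $\delta$), performing a dyadic decomposition over the sizes $N \in [2^k, 2^{k+1})$, and using the trivial bound $\#\{l : |\mathbb{T}_{\kappa,l}| \geq 2^k\} \leq \nk/2^k$ in a union bound, yields
\begin{equation*}
\sum_{l \geq L} \mathbb{P}\bigl( \mathrm{diam}(\mathcal{T}_{\kappa,l}) > \delta \bigr) \,\longrightarrow\, 0 \quad \text{as } L \to \infty,
\end{equation*}
uniformly in $\kappa$, once the conclusion of the previous paragraph---that $|\mathbb{T}_{\kappa,L}|/\nk \to 0$---is invoked to make the exponential factor dominate the polynomial prefactor. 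Combining the mass and diameter estimates closes the argument.
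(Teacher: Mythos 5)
Your overall skeleton (reduce to tail insignificance of diameter and mass for both $\mathcal{F}_\kappa^\downarrow$ and the limit, handle mass via excursion lengths, handle diameter via the height bound of \cite{Addario2012} plus a dyadic union bound over component sizes) matches the paper's strategy, and the mass and limit-object parts are fine (the paper's mass bound is even simpler: deterministically $|\mathbb{T}_{\kappa,j}|\le \nk/j$). But the diameter step contains a genuine gap. You assert a conditional bound of the form $\mathbf{P}(h(\mathbb{T}_{\kappa,l})\ge h\mid |\mathbb{T}_{\kappa,l}|=N)\le C\exp(-c\,h^2/N)$, i.e.\ sub-Gaussian in the tree's \emph{size}. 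Theorem \ref{thm:louigi_tree_height_tail_bound} does not give this: its exponent is $-m^2/\bigl(608\,\sigma^2(\mathbf{s})1_{\mathbf{s}}^2\bigr)$, where $\sigma^2(\mathbf{s})=\sum_i i^2 s^{(i)}$ is the sum of squared degrees of the tree and $1_{\mathbf{s}}=(|\mathbf{s}|-2)/(|\mathbf{s}|-1-s^{(1)})$ blows up when almost all vertices have degree $1$. Conditioning only on the size $N$ is therefore not enough: a small component could hoard high-degree vertices (making $\sigma^2(T)$ of order $\Delta_\kappa|T|$ rather than $|T|$), or could be essentially a path of length $N\gg\sqrt{\nk}$ with $N=o(\nk)$ (in which case $1_{\mathbf{s}}\approx N$ and the height is $N$, yet your bound would wrongly declare this exponentially unlikely). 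So the inequality you rely on is false without further input, and the phrase ``valid for $h$ up to order $\sqrt{\nk}$ under the hypothesis on $\Delta_\kappa$'' does not reflect how that hypothesis actually enters.

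What is missing is precisely the bulk of the paper's work behind Proposition \ref{prop:diameter of small trees}: one must show that, with high probability, \emph{every} small component $T$ has (i) $\sigma^2(T)$ not much larger than $(|T|/\nk)^{1/2}\sigma^2(\mathbf{s}_\kappa)$ (or $\le \nk^{1-\epsilon/2}$ for very small $T$), and (ii) a proportion of degree-$1$ vertices bounded away from $1$, so that $1_{\mathbf{s}}$ stays bounded. The paper obtains (i) from the sampling-without-replacement concentration bound Proposition \ref{prop: concentration on long intervals} — this is exactly where $\Delta_\kappa=O(\nk^{(1-\epsilon)/2})$ is used, since $\Delta^2$ sits in the denominator of the exponent — transferred to tree components via the marked-forest/cyclic-interval argument (Proposition \ref{prop:tree second momonet concentration alpha level}), and (ii) from the martingale concentration for degree proportions (Lemma \ref{lem: bad event small}, Proposition \ref{prop: bad_characterization}); these are then combined with Theorem \ref{thm:louigi_tree_height_tail_bound} through the four-event decomposition and conditioning argument of Proposition \ref{prop:key_prop}. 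Your dyadic union bound could then be run essentially as you describe, but without establishing (i) and (ii) the conditional height estimate you invoke has no justification, and the proof as proposed does not go through.
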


\begin{rmk}\label{rmk:degree assumption}
The assumptions of Theorem \ref{thm:forest of trees} imply that $\mu(\textbf{p}_\kappa)\rightarrow \mu(\textbf{p})=1$ and that $\Delta_{\kappa}=o({\nk}^{1/2})$. We include the proof of these facts as Lemma \ref{lem:appendix degree assumption} in the Appendix.
\end{rmk}



\begin{rmk}
The pair $((\gamma_l, l\ge 1), (\mathcal{T}_{\gamma_l}, l\ge 1))$ has the same law as $((\gamma_l, l\ge 1),(\mathcal{T}_{|\gamma_l|\mathbf{e}_l}, l\ge 1))$ where $(\mathbf{e}_l,l\ge 1)$ are standard Brownian excursions, independent of each other and of $(\gamma_l, l\ge 1)$.
\end{rmk}

\subsection{Key ingredients of the paper}
Here we summarize the two key ingredients of this paper. The first element is the convergence of the large trees in (\ref{eqn:main_thm_statement}), which is essentially given by the following proposition. For all $l\ge 1$, let $X_{\kappa, l}=\frac{|\mathbb{T}_{\kappa,l}|}{\nk}$. 
\begin{prop}\label{prop: first j converge}
Under the conditions of Theorem \ref{thm:forest of trees}, for any fixed $j\ge 1$,
\begin{equation}\label{prop:large_tree_conv}((X_{\kappa,l})_{l\le j}, (\mathcal{T}_{\kappa,l})_{l\le j})\overset{d}{\rightarrow}((|\gamma_l|)_{l\le j}, (\mathcal{T}_{|\gamma_l|\mathbf{e}_l})_{l\le j})
\end{equation}
as $\kappa\rightarrow\infty$, where $(\mathbf{e}_l)_{l\le j}$ are independent copies of $\mathbf{e}$, and $(\gamma_l, l\ge 1)$ are the excursions of $(F_{\lambda}(s)-\inf\limits_{s'\in(0,s)}  F_{\lambda}(s'))_{0\le s\le 1}$ ranked in decreasing order of length.
\end{prop}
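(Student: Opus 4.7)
The plan is to exploit the standard Lukasiewicz-walk encoding of plane forests. For a uniformly random $\mathbb{F}_\kappa$ with degree sequence $\sk$, the increments of its depth-first Lukasiewicz walk $W_\kappa$ form a uniformly random ordering of the multiset $\{k-1 : \text{a vertex has degree } k \text{ in } \sk\}$, conditioned so that the walk first reaches $-c(\sk)$ at time $\nk$. Equivalently, $W_\kappa$ is a discrete first-passage bridge, and the tree components of $\mathbb{F}_\kappa$ correspond bijectively to the excursions of $W_\kappa$ above its running minimum, each such excursion being itself the Lukasiewicz walk of its tree.

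Under the hypotheses of Theorem~\ref{thm:forest of trees} together with Remark~\ref{rmk:degree assumption}---so that $\Delta_\kappa=o(\nk^{1/2})$, $\sigma(\textbf{p}_\kappa)\to\sigma(\textbf{p})\in(0,\infty)$, and $c(\sk)/(\sigma_\kappa\nk^{1/2})\to\lambda$---a Donsker-type invariance principle for discrete first-passage bridges in the spirit of \cite{Bettinelli2010} should give
\[
\left(\tfrac{1}{\sigma_\kappa\nk^{1/2}}W_\kappa(\lfloor\nk t\rfloor)\right)_{0\le t\le 1}\ \overset{d}{\longrightarrow}\ \left(F^{br}_{\lambda}(t)\right)_{0\le t\le 1}
\]
in the Skorokhod topology. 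Since $F^{br}_\lambda-\inf F^{br}_\lambda$ almost surely has countably many excursions with distinct leading lengths, the functional returning the ranked lengths of the top $j$ excursions of a reflected path is a.s.\ continuous at the limit, so the continuous mapping theorem yields $(X_{\kappa,l})_{l\le j}\overset{d}{\longrightarrow}(|\gamma_l|)_{l\le j}$ jointly with the rescaled walk.

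For the metric structure, I would condition on the ordered sub-degree sequences $(\mathbf{s}^{(l)}_\kappa)_{l\ge 1}$ of $(\mathbb{T}_{\kappa,l})_{l\ge 1}$. Conditionally, the tree components are independent, each uniformly distributed over plane trees with its prescribed sub-degree-sequence, and the Broutin--Marckert theorem \cite{BroutinMarckert2012} applied componentwise gives convergence of the rescaled $\mathbb{T}_{\kappa,l}$'s to independent BCRTs. Matching the per-tree rescaling to the forest-scale rescaling $\tfrac{\sigma_\kappa}{2\nk^{1/2}}d_{gr}$---using the scaling identity that rescales BCRT distances by $\sqrt{c}$ and measures by $c$---combined with $X_{\kappa,l}\to|\gamma_l|$, identifies the limit as $(\mathcal{T}_{|\gamma_l|\mathbf{e}_l})_{l\le j}$. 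The hard part, and really the only step requiring input beyond the cited results, is verifying that the random sub-degree sequences $(\mathbf{s}^{(l)}_\kappa)_{l\le j}$ simultaneously inherit the Broutin--Marckert hypotheses---$\sigma(\mathbf{p}^{(l)}_\kappa)\to\sigma(\textbf{p})$, the proportion of degree-$1$ vertices bounded away from $1$, and maximum degree $o(|\mathbb{T}_{\kappa,l}|^{1/2})$. This is ultimately a concentration statement for a uniformly random permutation of a fixed multiset: any contiguous block of length $\sim\alpha\nk$ should contain a degree sub-multiset proportionally close to $\alpha\sk$. I expect this to follow from negative-association / sampling-without-replacement bounds, with the global bound $\Delta_\kappa=o(\nk^{1/2})$ taking care of the per-tree maximum degree. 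Once this concentration is in hand, the proof of Proposition~\ref{prop: first j converge} assembles routinely.
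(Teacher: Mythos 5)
Your proposal follows essentially the same route as the paper: sizes of the large components via the first-passage-bridge limit of the Lukasiewicz walk (the paper's Theorem \ref{thm:walk convergence} and Proposition \ref{prop:length convergence}), then conditioning on the random sub-degree sequences, verifying the Broutin--Marckert hypotheses for each large tree, applying Theorem \ref{thm:Broutin-Marckert} componentwise with conditional independence, and rescaling by Brownian scaling. The ``hard part'' you defer to sampling-without-replacement concentration is exactly what the paper supplies via its martingale bounds and the marked-forest/cyclic-block argument (Lemma \ref{lem: bad event small}, Propositions \ref{prop: bad_characterization}, \ref{prop:proportion and second moments convergence} and Lemma \ref{lem: max degree ok}), so the outline matches the paper's proof step for step.
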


There are two parts of the convergence in (\ref{prop:large_tree_conv}). One is the convergence of the normalized sizes of large trees to lengths of excursions. This will be given by the following proposition. To state this result, we need to first introduce some notions. Let $\mathcal{C}_0(1)=\{x\in C([0,1], \mathbb{R}): x(0)=0\}$
For a non-negative function $g^+\in \mathcal{C}_0(1)$, an {\em excursion} $\gamma$ of $g^+$ is the restriction of $g^+$ to a time interval $[l(\gamma), r(\gamma)]$ such that $g^+(l(\gamma))=g^+(r(\gamma))=0$ and $g^+(s)>0$ for $s\in (l(\gamma), r(\gamma))$. In this case $[l(\gamma), r(\gamma)]$ is called an {\em excursion interval} of $g^+$. The length of the excursion is denoted as $|\gamma|=r(\gamma)-l(\gamma)$. For a function $g$ we write $g(s)-\min\limits_{0\le s' < s}g(s')$ to denote $(g(s)-\min\limits_{0\le s' < s}g(s'),~ 0\le s\le 1)$. For $g \in \mathcal{C}_0(1)$, sometimes we refer the excursions of $g(s)-\min\limits_{0\le s' < s}g(s')$ as excursions of $g$. Let $l^{\downarrow}_1=\{x=(x_1, x_2, \cdots): x_1\ge x_2\ge \cdots \ge 0, \sum\limits_i x_i\le 1\}$ and endow $l^\downarrow_1$ with the topology induced by the $l_1$ distance: $d(x,y)=\sum\limits_i |x_i-y_i|$. 

\begin{prop}\label{prop:length convergence}
Under the hypothesises of Theorem \ref{thm:forest of trees}, we have
\begin{equation}\label{eqn: tree sizes convergence}
(|\mathbb{T}_{\kappa, l}|/\nk)_{l\ge 1}\overset{d}{\rightarrow}(|\gamma_l|)_{l\ge 1}
\end{equation}
 in $l^{\downarrow}_1$, where $(\gamma_l, l\ge 1)$ are the excursions of $F_{\lambda}^{br}(s)-\min\limits_{0\le s'\le s} F_{\lambda}^{br}(s')$ ranked in decreasing order of length. 
\end{prop}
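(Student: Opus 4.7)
The plan is to encode the random forest $\mathbb{F}_\kappa$ by its \emph{Lukasiewicz walk}, establish convergence of this walk to the first passage bridge $F^{br}_\lambda$, and then apply the continuous mapping theorem at the level of excursion lengths. Enumerating the vertices $u_1,\dots,u_{\nk}$ of $\mathbb{F}_\kappa$ in lexicographic order (concatenated across the tree components), set
\[
W_\kappa(i)=\sum_{j=1}^{i}\bigl(k_{\mathbb{F}_\kappa}(u_j)-1\bigr),\qquad 0\le i\le \nk,
\]
so that $W_\kappa(0)=0$ and $W_\kappa(\nk)=-c(\sk)$. A standard bijection identifies the excursions above zero of $W_\kappa(\cdot)-\min_{0\le j\le \cdot}W_\kappa(j)$ with the tree components of $\mathbb{F}_\kappa$, the length of the $l$-th excursion being exactly $|\mathbb{T}_{\kappa,l}|$. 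Under $\mathbb{P}_{\sk}$, the walk $W_\kappa$ is a uniformly random arrangement of the multiset of increments (with multiplicity $s_\kappa^{(i)}$ for the value $i-1$), conditioned on first hitting $-c(\sk)$ at time $\nk$.

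Next I would prove that the rescaled walk
\[
\tilde W_\kappa(t):=\frac{1}{\sigma_\kappa\nk^{1/2}}W_\kappa(\lfloor \nk t\rfloor),\qquad 0\le t\le 1,
\]
converges in distribution to $F^{br}_\lambda$ in $C([0,1])$. The argument follows the Broutin--Marckert strategy: first drop the first-passage constraint, so that $W_\kappa$ becomes a uniform permutation of the multiset of step sizes, which rescales to a Brownian bridge from $0$ to $-\lambda$ via a Donsker-type theorem for exchangeable increments. The first-passage conditioning is then transferred by an absolute-continuity argument, the discrete counterpart of \eqref{eqn:first passage bridge change of measure}, together with continuity of the limiting finite-dimensional densities. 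Since $\Delta_\kappa=o(\nk^{1/2})$ by Remark \ref{rmk:degree assumption}, the rescaled jumps vanish uniformly and Skorokhod convergence upgrades to uniform convergence.

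Finally, I would deduce \eqref{eqn: tree sizes convergence} by continuous mapping. Let $\Psi:C([0,1])\to l^{\downarrow}_1$ send $f$ to the decreasing rearrangement of the lengths of the excursions above zero of $f(s)-\min_{0\le s'\le s}f(s')$. Via the Skorokhod representation theorem one may assume $\tilde W_\kappa\to F^{br}_\lambda$ uniformly on $[0,1]$; since the running infimum is continuous in the uniform topology, the reflected processes converge uniformly. Using that $F^{br}_\lambda$ almost surely has no interval of constancy at its running minimum, one obtains coordinate-wise convergence of the ranked excursion lengths. To upgrade to $l^{\downarrow}_1$ convergence, observe that the tree components of $\mathbb{F}_\kappa$ partition its vertex set, so $\sum_{l}|\gamma_l(\tilde W_\kappa)|=1$ deterministically, while $\sum_{l}|\gamma_l(F^{br}_\lambda)|=1$ almost surely, because the zero set of $F^{br}_\lambda(\cdot)-\min_{0\le s'\le\cdot}F^{br}_\lambda(s')$ has zero Lebesgue measure. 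Coordinate-wise convergence together with equality of the totals then yields $l^{\downarrow}_1$ convergence by a Scheff\'e-type argument.

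The main obstacle is the continuity of $\Psi$ at $F^{br}_\lambda$: one has to rule out near-tangencies between $F^{br}_\lambda$ and its running minimum that an approximating walk could fail to resolve. Inside $[0,1)$ the absolute continuity formula \eqref{eqn:first passage bridge change of measure} gives a direct comparison with Brownian motion and handles the interior of the interval; the endpoint $s=1$, where the first-passage constraint forces the minimum to be attained deterministically, needs a separate analysis to ensure that the excursion ending at $1$ is correctly captured in the limit. Once these pathological configurations are excluded almost surely, the mass-preservation argument completes the proof.
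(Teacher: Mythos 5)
Your overall skeleton --- Lukasiewicz walk, convergence of the rescaled walk to $F^{br}_\lambda$, continuous mapping for ranked excursion lengths, then an $l^{\downarrow}_1$ upgrade from conservation of mass --- is the same as the paper's (Theorem \ref{thm:walk convergence} followed by Lemma \ref{lem: criteria for deterministic functions}). Where you genuinely diverge is the conditioning step. The paper does not condition by absolute continuity: it realizes $S_{\mathbb{F}_\kappa}$ exactly as a uniform cyclic shift of the exchangeable bridge walk via the $n$-to-$1$ map of Section \ref{sec:nto1} (Corollary \ref{cor:dfswalkofforest}), proves bridge convergence (Proposition \ref{prop:walk convergence}), and transfers the limit through a.s.\ continuity of the shift (Lemmas \ref{lem:boils down to shift at close place}--\ref{lem: continuity of rotation}) and the Bertoin--Chaumont--Pitman transformation (Theorem \ref{thm: BCP 2003}). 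Your $h$-transform route is viable in principle: by the cycle lemma (the same count as \eqref{eqn:number of plane forests}) the discrete Radon--Nikodym derivative of the first-passage walk with respect to the permuted walk on the first $m$ steps is exactly $\frac{\nk(\ck+W_\kappa(m))}{\ck(\nk-m)}$ on the event $\{\min_{j\le m}W_\kappa(j)>-\ck\}$, which rescales to $\frac{\lambda+B^{br}_\lambda(s)}{\lambda(1-s)}$, so no local limit theorem for ``finite-dimensional densities'' is needed. But this only controls $[0,s]$ for $s<1$, and the behaviour near $s=1$, which you explicitly defer, is a genuine gap: the conditioning event has probability of order $\nk^{-1/2}$, tightness of $\widetilde{W}_\kappa$ on $[1-\delta,1]$ does not follow from the unconditioned walk, and without it you get neither convergence in $C[0,1]$ nor control of excursions whose intervals approach $1$ --- and for the ranked-lengths statement those excursions may be among the largest, so they cannot be discarded. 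The rotation argument is precisely what lets the paper avoid any endpoint analysis.

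The second gap is in the continuity of $\Psi$ at $F^{br}_\lambda$: ``no interval of constancy at the running minimum'' is not the relevant property. What must be ruled out is two distinct excursion intervals of the reflected process beginning at the same value of the running minimum, since then an arbitrarily small perturbation (or the discrete walk) can merge or split them; this is the paper's Lemma \ref{lem: no same new min}, proved via absolute continuity plus a separate endpoint argument (Lemma \ref{lem: properties of brownian bridge}(b)). Also, your justification that the excursion lengths of $F^{br}_\lambda$ sum to $1$ ``because the zero set has zero Lebesgue measure'' is circular --- that is the same assertion; the paper proves it (Lemma \ref{lem: positive excursion length}) through the representation of the ranked lengths as jumps of a conditioned stable subordinator, although an absolute-continuity argument on $[0,s]$ for $s<1$ could be made to work. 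With those two facts supplied, your Scheff\'e-type $l^{\downarrow}_1$ upgrade matches the paper's final step and is fine.
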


This proposition will be a corollary of the following theorem, which is the main result of Section \ref{sec:convergence of walk}. For a plane forest $\mathrm{F}$, let $u_1<u_2<\cdots<u_{|\mathrm{F}|}$ be the nodes of $\mathrm{F}$ listed according to their lexicographical order in $\mathcal{U}$ in each tree component, with nodes of first tree listed first, then the nodes of second tree and so on. The \emph{depth-first walk, or Lukasiewicz path} $S_\mathrm{F}$ is defined as follows. First set $S_\mathrm{F}(0)=0$ and then let $$S_\mathrm{F}(i)=\sum\limits_{j=1}^i(k_{\mathrm{F}}(u_j)-1) \mbox{ for } i=1, 2, \cdots, |\mathrm{F}|.$$ We extend the definition of $S_\mathrm{F}$ to the compact interval $[0, |\mathrm{F}|]$ by linear interpolation.

\begin{thm}\label{thm:walk convergence}
Under the conditions of Theorem \ref{thm:forest of trees}, we have
\begin{equation}\label{eqn: walk convergence}
\left(\frac{S_{\mathbb{F}_\kappa}(t\nk)}{\sigma(\mathbf{p}_\kappa){\nk}^{1/2}}\right)_{t\in[0,1]}\overset{d}{\rightarrow} F^{br}_{\lambda}
\end{equation}
in $\mathcal{C}_0(1)$ as $\kappa\rightarrow\infty$.
\end{thm}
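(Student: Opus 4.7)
\emph{Representation via the cycle lemma.} The plan is to exhibit $S_{\mathbb{F}_\kappa}$ as a conditioned exchangeable random bridge and identify the scaling limit of the conditioned process as $F^{br}_\lambda$. Let $W_\kappa$ be the walk whose $\nk$ increments form a uniformly random permutation of the multiset containing $s_\kappa^{(i)}$ copies of $i-1$ for each $i \geq 0$; then $W_\kappa$ is an exchangeable bridge from $0$ to $-c(\mathbf{s}_\kappa)$. The classical bijection between plane forests with degree sequence $\mathbf{s}_\kappa$ and integer paths whose first passage time to $-c(\mathbf{s}_\kappa)$ equals $\nk$, combined with the exchangeability of the increments, shows that $S_{\mathbb{F}_\kappa}$ has the law of $W_\kappa$ conditioned on
\[
A_\kappa := \{W_\kappa(i) > -c(\mathbf{s}_\kappa)\ \text{for all } i < \nk\}.
\]
The cycle lemma gives $\mathbb{P}(A_\kappa) = c(\mathbf{s}_\kappa)/\nk$, and, applied to the remaining $(1-s)\nk$ increments conditional on $\mathcal{F}_{s\nk}$, yields the exact identity
\[
\mathbb{P}(A_\kappa \mid \mathcal{F}_{s\nk}) = \mathbf{1}_{B_\kappa(s)} \cdot \frac{c(\mathbf{s}_\kappa) + W_\kappa(s\nk)}{(1-s)\nk}, \qquad B_\kappa(s) := \{W_\kappa(i) > -c(\mathbf{s}_\kappa),\ i \leq s\nk\}.
\]

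\emph{Bridge invariance and matching the change of measure.} Under our hypotheses, together with $\Delta_\kappa = o(\nk^{1/2})$ from Remark~\ref{rmk:degree assumption}, a standard invariance principle for exchangeable bridges (Kallenberg 1973) yields
\[
\hat{W}_\kappa(t) := \frac{W_\kappa(t\nk)}{\sigma(\mathbf{p}_\kappa)\nk^{1/2}} \xrightarrow{d} B^{br}_\lambda \quad \text{in } \mathcal{C}_0(1).
\]
Writing $\hat{m}_\kappa := c(\mathbf{s}_\kappa)/(\sigma(\mathbf{p}_\kappa)\nk^{1/2}) \to \lambda$ and combining the two displays above, for any $0 \leq s < 1$ and any bounded continuous $f$ on $C([0,s])$ one obtains
\[
\mathbb{E}\bigl[f(\hat{S}_{\mathbb{F}_\kappa}|_{[0,s]})\bigr] = \mathbb{E}\!\left[ f(\hat{W}_\kappa|_{[0,s]})\, \mathbf{1}_{\{\inf_{t \leq s} \hat{W}_\kappa(t) > -\hat{m}_\kappa\}} \cdot \frac{\hat{m}_\kappa + \hat{W}_\kappa(s)}{(1-s)\hat{m}_\kappa} \right].
\]
Passing $\kappa \to \infty$ replaces $\hat{W}_\kappa$ and $\hat{m}_\kappa$ by $B^{br}_\lambda$ and $\lambda$. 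A short computation, using the Brownian-bridge formula \eqref{eqn:brownian bridge change of measure} and the identity $p'_a(x) = -(x/a)p_a(x)$, then identifies the limit expression with the right-hand side of \eqref{eqn:first passage bridge change of measure}, delivering convergence of all finite-dimensional marginals.

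\emph{Tightness and main obstacle.} To upgrade to convergence in $\mathcal{C}_0(1)$, I would prove tightness of the rescaled Lukasiewicz paths by controlling their modulus of continuity. The oscillation of $S_{\mathbb{F}_\kappa}$ on a sub-interval translates, via the Lukasiewicz coding, to heights in subforests of $\mathbb{F}_\kappa$, and Addario-Berry's tail bound \cite{Addario2012} on the height of a random plane tree with prescribed degree sequence provides uniform estimates across $\kappa$. The principal difficulty lies in the limit passage in the second step: the integrand features the indicator $\mathbf{1}_{\{\inf_{t \leq s} \hat{W}_\kappa(t) > -\hat{m}_\kappa\}}$, which is only lower semicontinuous on $C([0,s])$. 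One has to show that $B^{br}_\lambda$ a.s.\ satisfies $\inf_{t \leq s} B^{br}_\lambda(t) > -\lambda$ for each $s < 1$ (a continuous analogue of the discrete constraint), while simultaneously controlling, uniformly in $\kappa$, the contribution of discrete paths that approach $-\hat{m}_\kappa$ very closely; combined with a Skorokhod coupling this justifies dominated convergence and the interchange of limit and expectation.
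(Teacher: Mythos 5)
Your route is genuinely different from the paper's. The paper never conditions on the vanishing-probability event $A_\kappa$: it represents $S_{\mathbb{F}_\kappa}$ exactly as a random cyclic shift of the exchangeable bridge (Corollary \ref{cor:dfswalkofforest}), proves the invariance principle $\widetilde W_\kappa\overset{d}{\to}B^{br}_\lambda$ (Proposition \ref{prop:walk convergence}), shows the shift map is a.s.\ continuous at $(B^{br}_\lambda,\nu)$ (Lemma \ref{lem: continuity of rotation}), and then invokes the continuous mapping theorem together with Theorem \ref{thm: BCP 2003} ($\theta_U(B^{br}_\lambda)\overset{d}{=}F^{br}_\lambda$); functional convergence in $\mathcal{C}_0(1)$ comes for free, with no separate tightness step. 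Your conditioning/absolute-continuity approach is workable in principle for the finite-dimensional part: the discrete density $\mathbf 1_{B_\kappa(s)}\,(\hat m_\kappa+\hat W_\kappa(s))/((1-s)\hat m_\kappa)$ is the correct cycle-lemma $h$-transform, and your identification of the limiting density with \eqref{eqn:first passage bridge change of measure} via \eqref{eqn:brownian bridge change of measure} and $p'_a(x)=-(x/a)p_a(x)$ is exactly right.

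However, there are genuine gaps. First, the passage to the limit in your exact identity is not dominated convergence as stated: the density factor $(\hat m_\kappa+\hat W_\kappa(s))/((1-s)\hat m_\kappa)$ is unbounded, so you need uniform integrability (e.g.\ a uniform $L^2$ bound on $\hat W_\kappa(s)$, which is available but must be proved), in addition to showing $\p{\inf_{t\le s}B^{br}_\lambda(t)=-\lambda}=0$ to handle the semicontinuous indicator. Second, and more seriously, your tightness argument does not work as described. The modulus of continuity of the Lukasiewicz path is not controlled by heights of subtrees: the value of $S_{\mathbb{F}_\kappa}$ at a vertex counts younger siblings of its ancestors, not its depth, and a single vertex of degree $\Delta_\kappa$ (only known to be $o(\nk^{1/2})$, or $O(\nk^{(1-\epsilon)/2})$ under Theorem \ref{thm:forest of trees_2}) raises the path by $\Delta_\kappa-1$ in one step while changing heights by one; conversely the bound ``path value $\le$ height $\times\,\Delta_\kappa$'' is far too lossy at the $\nk^{1/2}$ scale. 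So Theorem \ref{thm:louigi_tree_height_tail_bound} is the wrong tool here (the paper uses it only for Proposition \ref{prop:diameter of small trees}, i.e.\ for the height process of small trees, not for the walk). Nor can tightness be inherited from the unconditioned bridge, since $\p{A_\kappa}=c(\mathbf{s}_\kappa)/\nk\to 0$; transferring oscillation bounds on $[0,s]$ through the density again needs the uniform integrability above, and near $t=1$ the density degenerates like $(1-s)^{-1}$, so a separate argument (e.g.\ time reversal of the first-passage lattice bridge) is required and is missing. Until the tightness step is replaced by a correct argument, the proposal only yields convergence of restrictions to $[0,s]$, $s<1$, not the stated convergence in $\mathcal{C}_0(1)$.
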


The second part of the convergence of (\ref{prop:large_tree_conv}) is the convergence of the large trees, for which we will rely on the following result about random trees with given degree sequences from \cite{BroutinMarckert2012}.
\begin{thm}[Theorem 1 in \cite{BroutinMarckert2012}]\label{thm:Broutin-Marckert}
Let $\{\sk,\kappa\ge 1\}$ be a degree sequence such that $\nk:=n(\sk)\rightarrow\infty, \Delta_{\kappa}:=\Delta(\sk)=o(\nk^{1/2})$. Suppose that there exists a distribution $\textbf{p}$ on $\mathbb{N}$ with mean 1 such that $\textbf{p}_\kappa$ converges to $\textbf{p}$ coordinatewise and such that $\sigma(\textbf{p}_\kappa)\rightarrow\sigma(\textbf{p})\in(0,\infty)$. Let $\mathbb{T}_\kappa$ be the random plane tree under $\mathbb{P}_{\sk}$, the uniform measure on the set of plane trees with degree sequence $\sk$. Let $\mathcal{T}_\kappa$ denote the measured rooted metric space $(\mathbb{T}_\kappa, \frac{\sigma(\textbf{p}_\kappa)}{2\nk^{1/2}}d_{gr}, \emptyset_\kappa, \mu_\kappa)$ where $\mu_\kappa$ denotes the uniform measure putting mass $\frac{1}{\nk}$ on each vertex of $\mathbb{T}_\kappa$. Then when $\kappa\rightarrow\infty, \mathcal{T}_\kappa\overset{d}{\rightarrow}\mathcal{T}_{\mathbf{e}}$ in the Gromov-Hausdorff-Prokhorov sense. 
\end{thm}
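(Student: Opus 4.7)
The plan is to encode the random tree $\mathbb{T}_\kappa$ by a rescaled continuous path (Lukasiewicz walk, then height function, then contour function), establish that this path converges in distribution to $\sigma(\mathbf{p})\mathbf{e}$ in $\mathcal{C}([0,1],\mathbb{R})$, and then invoke the continuity of the map $g\mapsto \mathcal{T}_g$ from $\mathcal{C}_0(1)$ into $(\mathbb{K},d_{GHP})$. Since the mass measure $\mathbf{m}_g$ is by definition the push-forward of Lebesgue measure under $p_g$, convergence of the encoding path automatically carries along convergence of the uniform vertex measure $\mu_\kappa$ and of the root.

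For the walk-level step, I would use Theorem \ref{thm:walk convergence} specialized to the single-tree setting: under $\mathbb{P}_{\sk}$, the Lukasiewicz walk $S_{\mathbb{T}_\kappa}$ has the law of a bridge-type walk whose increments $k_{\mathbb{T}_\kappa}(u_j)-1$ form a uniformly random permutation of the multiset encoded by $\sk$, conditioned to first hit $-1$ at time $\nk$. A Donsker-type invariance principle for exchangeable increments, combined with the hypotheses $\mathbf{p}_\kappa\to\mathbf{p}$, $\sigma(\mathbf{p}_\kappa)\to\sigma(\mathbf{p})$, and $\Delta_\kappa=o(\nk^{1/2})$ (which rules out a dominant jump), shows that the unconditioned walk converges to a Brownian bridge of appropriate variance, and a Vervaat-rotation / cycle-lemma argument upgrades this to convergence of $S_{\mathbb{T}_\kappa}(\lfloor t\nk\rfloor)/(\sigma(\mathbf{p}_\kappa)\nk^{1/2})$ to the standard Brownian excursion $\mathbf{e}$.

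The main obstacle is passing from Lukasiewicz convergence to convergence of the height or contour function, because the map $S\mapsto H$ is not continuous — a single large jump of $S$ does not produce a corresponding spike in $H$. The strategy is to prove a uniform comparison
\begin{equation*}
\sup_{0\le t\le 1}\left|\frac{\sigma(\mathbf{p}_\kappa)}{2\nk^{1/2}}H_{\mathbb{T}_\kappa}(\lfloor t\nk\rfloor)-\frac{1}{\sigma(\mathbf{p}_\kappa)\nk^{1/2}}S_{\mathbb{T}_\kappa}(\lfloor t\nk\rfloor)\right|\xrightarrow{\mathbf{P}}0,
\end{equation*}
which amounts to controlling, uniformly in $t$, the discrepancy between the height of the $t\nk$-th vertex and the number of weak descending ladder points of $S_{\mathbb{T}_\kappa}$ up to time $t\nk$. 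The required estimate uses the subgaussian tail bound of Addario-Berry \cite{Addario2012} on $\max H_{\mathbb{T}_\kappa}$ to a priori bound the height by $O(\nk^{1/2})$ with overwhelming probability, and then uses the $\Delta_\kappa=o(\nk^{1/2})$ condition to show that the excess increments of $S$ at each step are negligible at the scale $\nk^{1/2}$.

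Having obtained convergence of the rescaled contour function $C_{\mathbb{T}_\kappa}$ to $\mathbf{e}$ in $\mathcal{C}([0,1],\mathbb{R})$ (the contour function and height function differ only by a factor of $2$ in time-parametrization, which is accommodated by the $\sigma(\mathbf{p}_\kappa)/(2\nk^{1/2})$ normalization), I conclude by the standard continuity statement that $\mathcal{T}_{C_{\mathbb{T}_\kappa}}\to \mathcal{T}_\mathbf{e}$ in $d_{GHP}$ whenever the encoding functions converge uniformly. Since $\mathcal{T}_{C_{\mathbb{T}_\kappa}}$ is GHP-isometric to $(\mathbb{T}_\kappa,\tfrac{\sigma(\mathbf{p}_\kappa)}{2\nk^{1/2}}d_{gr},\emptyset_\kappa,\mu_\kappa)$ (with the uniform vertex measure corresponding to normalized Lebesgue measure on $[0,1]$ under the contour parametrization), this delivers $\mathcal{T}_\kappa\xrightarrow{d}\mathcal{T}_\mathbf{e}$ as required.
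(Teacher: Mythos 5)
This statement is not proved in the paper at all: it is quoted verbatim from Broutin--Marckert, and the paper's own contribution (Remark \ref{rmk:GHP justification}, Appendix \ref{sec:GHP justification}) is only the upgrade from GH to GHP convergence. There the argument takes the contour-function convergence $\hat{C}_\kappa\to\mathbf{e}$ from Broutin--Marckert's Theorem 3, applies the bound $d_{GHP}(\mathcal{T}_f,\mathcal{T}_g)\le 6\|f-g\|_\infty+|\sigma_f-\sigma_g|$ (Proposition \ref{prop:distance coding function}), and then \emph{separately} compares $\mathcal{T}_{\hat{C}_\kappa}$ with the discrete space $\mathcal{T}_\kappa$: Hausdorff distance at most half a rescaled edge, and Prokhorov distance between the normalized length measure and the uniform vertex measure at most $\frac{1}{\nk}+\frac{\sigma(\mathbf{p}_\kappa)}{2\nk^{1/2}}$. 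You instead sketch a from-scratch proof of the full functional limit, which is a much more ambitious route, and it has two genuine gaps. First, the crux of Broutin--Marckert's theorem is precisely the uniform comparison between the height function and the Lukasiewicz walk, and your sketch does not supply it: Addario-Berry's tail bound on $\max H$ plus $\Delta_\kappa=o(\nk^{1/2})$ do not yield the statement. What is needed is a uniform law of large numbers for the number of weak ladder epochs along ancestral lines, with limiting density $2/\sigma^2(\mathbf{p})$, for increments sampled \emph{without replacement} (exchangeable, not i.i.d.); establishing that concentration is the technical heart of Broutin--Marckert's paper, and your displayed comparison silently presupposes the constant $2/\sigma^2$ without deriving it. (Also, Theorem \ref{thm:walk convergence} of this paper cannot be ``specialized to the single-tree setting'': its hypotheses force $c(\sk)\asymp\sigma_\kappa\nk^{1/2}$, so for one tree you must rerun the exchangeable-increments CLT and Vervaat argument separately, as you in fact go on to describe.)

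Second, your concluding identification is incorrect as stated: the real tree $\mathcal{T}_{C_{\mathbb{T}_\kappa}}$ coded by the contour function carries the push-forward of Lebesgue measure, i.e.\ the (normalized) \emph{length} measure on the tree with edges viewed as unit segments, which is not the uniform measure on vertices; hence $\mathcal{T}_{C_{\mathbb{T}_\kappa}}$ is \emph{not} GHP-isometric to $(\mathbb{T}_\kappa,\frac{\sigma(\mathbf{p}_\kappa)}{2\nk^{1/2}}d_{gr},\emptyset_\kappa,\mu_\kappa)$. One must bound the GHP distance between the two, which requires exactly the Prokhorov estimate carried out in Appendix \ref{sec:GHP justification} (covering each parent edge by an $\epsilon$-enlargement of the vertex set and conversely, giving error $\frac{1}{\nk}+\frac{\sigma(\mathbf{p}_\kappa)}{2\nk^{1/2}}$). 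This step is cheap but cannot be waved away as an isometry, since it is the only place where the passage from length measure to vertex measure — i.e.\ the ``P'' in GHP — is actually controlled.
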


\begin{rmk}\label{rmk:GHP justification}
In fact Theorem 1 in \cite{BroutinMarckert2012} is only stated in the Gromov-Hausdorff sense, that is, $(\mathbb{T}_\kappa, \frac{\sigma(\textbf{p}_\kappa)}{2\nk^{1/2}}d_{gr})\overset{d}{\rightarrow} (T_{\mathbf{e}}, d_{\mathbf{e}},\emptyset_{\mathbf{e}})$. But the conclusion can be strengthened to GHP convergence easily. For completeness, we include a proof of this fact in Appendix \ref{sec:GHP justification}.
\end{rmk}

The following proposition contains the additional ingredient required to prove Theorem \ref{thm:forest of trees_2}.

\begin{prop}\label{prop:diameter of small trees}
Under the conditions of Theorem \ref{thm:forest of trees_2}, for all $a>0$, we have
\[\lim\limits_{j\to\infty}\limsup\limits_{\kappa\to\infty} \p{\sup\limits_{l>j} \mathrm{diam}(\mathcal{T}_{\kappa,l})>a}=0.\]
\end{prop}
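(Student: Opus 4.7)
The plan is to reduce the diameter bound to a height bound, apply the tail bound of \cite{Addario2012} to each tree component conditionally on its degree sequence, and sum the resulting estimates via a dyadic decomposition of tree sizes combined with the size convergence from Proposition \ref{prop:length convergence}. Writing $N_l := |\mathbb{T}_{\kappa,l}|$, one has
\[\mathrm{diam}(\mathcal{T}_{\kappa,l}) \leq (\sigma_\kappa/\nk^{1/2})\,\mathrm{ht}(\mathbb{T}_{\kappa,l}) \leq \sigma_\kappa N_l/\nk^{1/2},\]
so any tree with $N_l \leq a\nk^{1/2}/\sigma_\kappa$ has scaled diameter at most $a$ automatically. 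Hence only trees with $N_l \in (a\nk^{1/2}/\sigma_\kappa,\, \delta \nk]$ can contribute, for any small $\delta>0$ to be chosen below.

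Since almost surely $\sum_l |\gamma_l| = 1$ (the set where $F^{br}_\lambda$ equals its running minimum has zero Lebesgue measure), the $l^\downarrow_1$ convergence in Proposition \ref{prop:length convergence} yields
\[\lim_{j\to\infty}\limsup_{\kappa\to\infty}\mathbf{P}\Big(\textstyle\sum_{l>j} N_l > \delta \nk\Big) = 0 \qquad\text{for every }\delta > 0.\]
Let $E_{\delta,j} := \{\sum_{l>j} N_l \leq \delta \nk\}$. Conditional on the tree-component degree sequences $(\mathbf{s}(\mathbb{T}_{\kappa,l}))_l$, the $\mathbb{T}_{\kappa,l}$ are independent uniform plane trees with those prescribed sequences, and \cite[Theorem~1]{Addario2012} yields a tail bound of the form
\[\mathbf{P}\!\left(\mathrm{ht}(\mathbb{T}_{\kappa,l}) \geq h \,\big|\, \mathbf{s}(\mathbb{T}_{\kappa,l})\right) \leq C\exp(-ch^2/N_l),\]
valid whenever $h$ exceeds a threshold of order $\Delta_\kappa$; under the hypothesis $\Delta_\kappa = O(\nk^{(1-\epsilon)/2})$ this threshold is met by $h = a\nk^{1/2}/\sigma_\kappa$ for $\kappa$ large. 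Decompose the range $(a\nk^{1/2}/\sigma_\kappa, \delta\nk]$ dyadically as $I_k := (2^{-k-1}\delta\nk,\, 2^{-k}\delta\nk]$ for $k = 0,1,\ldots,K$ with $K = O(\log \nk)$. On $E_{\delta,j}$, at most $2^{k+1}$ indices $l>j$ satisfy $N_l \in I_k$, and for each the conditional probability of $\mathrm{ht}(\mathbb{T}_{\kappa,l}) \geq a\nk^{1/2}/\sigma_\kappa$ is at most $C\exp(-c' a^2 2^k/\delta)$. The resulting geometric sum is dominated by $C''\exp(-c''a^2/\delta)$, giving
\[\mathbf{P}\Big(\sup_{l>j}\mathrm{diam}(\mathcal{T}_{\kappa,l}) > a\Big) \leq \mathbf{P}(E_{\delta,j}^c) + C''\exp(-c''a^2/\delta).\]
Taking $\limsup_\kappa$, then $\lim_{j\to\infty}$, and finally $\delta\to 0$ yields the proposition.

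The main obstacle will be the careful conditional application of the Addario-Berry bound: its natural statement involves the degree-sequence variance $\widetilde{\sigma}_l^2$ of the individual tree, which may degenerate for path-like components and for which the schematic form displayed above presumes $\widetilde{\sigma}_l^2$ to be bounded below. For trees with $N_l$ in the relevant range ($N_l \gtrsim \nk^{1/2}$), a sampling-without-replacement concentration argument for the random partition of $\mathbf{s}_\kappa$ into the tree-component sub-sequences should ensure that $\widetilde{\sigma}_l^2$ is close to $\sigma(\mathbf{p})^2 - 1 > 0$ with high probability (recall from Remark \ref{rmk:degree assumption} that $\mu(\mathbf{p})=1$, and $p^{(1)}<1$ forces $\sigma(\mathbf{p})^2>1$ strictly). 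The strengthened hypothesis $\Delta_\kappa = O(\nk^{(1-\epsilon)/2})$ in Theorem \ref{thm:forest of trees_2} is precisely what makes the $\Delta$-dependent threshold and correction term in \cite{Addario2012} negligible at the length scale $h \sim \nk^{1/2}$.
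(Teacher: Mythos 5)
Your skeleton---reduce diameter to height, condition on the component degree sequences so that the components become independent uniform trees with those sequences, apply the height tail bound of Theorem \ref{thm:louigi_tree_height_tail_bound}, and sum over a dyadic decomposition of component sizes---is essentially the strategy the paper follows (through Proposition \ref{prop:key_prop}). The gap is exactly at the step you yourself flag as the ``main obstacle'', and the fix you sketch does not work. Theorem \ref{thm:louigi_tree_height_tail_bound} gives $\p{h(\mathbb{T}(\mathbf{r}))\ge m}\le 7\exp\left(-m^2/608\,\sigma^2(\mathbf{r})1_{\mathbf{r}}^2\right)$, where $\sigma^2(\mathbf{r})=\sum_i i^2 r^{(i)}$ is the \emph{unnormalized} sum of squared degrees of that component and $1_{\mathbf{r}}$ blows up as the proportion of degree-one vertices tends to $1$ (there is no $\Delta$-threshold in this statement). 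Hence to reach your schematic bound $C\exp(-ch^2/N_l)$ you need, simultaneously for every component in the relevant size range, an \emph{upper} bound on its per-vertex second moment and a bound keeping its degree-one proportion away from $1$; a lower bound on the component variance is not what is at stake, and the degree-one control is never addressed in your sketch. More seriously, the uniform closeness you propose ($\widetilde\sigma_l^2$ close to $\sigma(\mathbf{p})^2-1$ for all components with $N_l\gtrsim\nk^{1/2}$) is not available under the hypotheses of Theorem \ref{thm:forest of trees_2}: with $\Delta_\kappa$ allowed to be of order $\nk^{(1-\epsilon)/2}$, a single vertex of near-maximal degree inside a component of size about $\nk^{1/2}$ makes that component's per-vertex second moment of order $\nk^{1/2-\epsilon}\to\infty$, and such configurations cannot be ruled out with high probability, so no sampling-without-replacement concentration can deliver closeness at that scale. (By contrast, your use of $E_{\delta,j}$ and Proposition \ref{prop:length convergence} to count components is fine but unnecessary: $N_l\le \nk/l$ deterministically, which is how the paper passes from Proposition \ref{prop:key_prop} to Proposition \ref{prop:diameter of small trees}.)

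The paper's proof of Proposition \ref{prop:key_prop} is built precisely to avoid the statement you would need. It proves only the much weaker bound $\sigma^2(T)\le(|T|/n)^{1/2}\sigma^2(\mathbf{s})$ for components with $\Delta^2<|T|<\beta n$ (event $E_1$, via Propositions \ref{prop: concentration on long intervals} and \ref{prop:tree second momonet concentration alpha level}, i.e.\ concentration of sums of squares over cyclic windows of the permuted child sequence), treats very small components $|T|\le n^{1-\epsilon}$ separately using $\Delta_\kappa\le\nk^{(1-\epsilon)/2}$ (event $E_2$), and controls the degree-one proportion of every component of size at least $n^{1/4}$ through the marked-forest rotation and the martingale bound (Lemma \ref{lem: bad event small} and Proposition \ref{prop: bad_characterization}), which is what keeps $1_{\mathbf{r}}\le 4/\epsilon$. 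The reason this weak control suffices is the slack between the target height $\beta^{1/8}n^{1/2}$ and the natural height scale of a component of size at most $\beta n$, even after inflating its second moment by the factor $(n/|T|)^{1/2}$; the dyadic summation then goes through as in the bounds on $\p{E_3}$ and $\p{E_4}$. To make your argument correct you would have to restructure it along these lines: replace ``closeness of $\widetilde\sigma_l^2$'' by a size-dependent upper bound on $\sigma^2(T)$ with a separate regime for very small trees, and add the degree-one proportion control before invoking Theorem \ref{thm:louigi_tree_height_tail_bound}.
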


The key results leading to Proposition \ref{prop:diameter of small trees} include a height bound for random tree with prescribed degree sequence and a variance bound for uniformly permuted child sequences. The height bound of uniformly random tree with prescribed degree sequence is given in the following theorem.

\begin{thm}[Theorem 1 in \cite{Addario2012}]\label{thm:louigi_tree_height_tail_bound}
Fix a degree sequence $\mathbf{s}=(s^{(i)}, i\ge 0)$ such that $\sum\limits_{i\ge 0} is^{(i)}=|\mathbf{s}|-1$, and let $\mathbb{T}(\mathbf{s})$ be a uniformly random plane tree with degree sequence $\mathbf{s}$. Then for all $m\ge 1$ we have
\[\p{h(\mathbb{T}(\mathbf{s}))\ge m}\le 7\exp\left(-m^2/608\sigma^2(\mathbf{s})1_{\mathbf{s}}^2\right)\] where $1_{\mathbf{s}}=\frac{|\mathbf{s}|-2}{|\mathbf{s}|-1-s^{(1)}}$.
\end{thm}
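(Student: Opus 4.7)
The plan is to encode $\mathbb{T}(\mathbf{s})$ by its Lukasiewicz walk $S = (S_i)_{0 \le i \le n}$, with $n = |\mathbf{s}|$ and $S_i - S_{i-1} = k_{\mathbb{T}}(u_i) - 1$, and to reduce the height tail bound to a concentration estimate for a uniformly random permutation of the degree multiset. The height can be read off $S$: $u_j$ (with $j < i$) is an ancestor of $u_i$ iff $S_{j-1} \le S_k$ for all $j \le k \le i-1$ (the walk has not dropped below level $S_{j-1}$ between processing $u_j$ and $u_i$). Hence $h(u_i)$ is a count of weak running-minimum times of $S$ seen backwards from $i-1$, and $h(\mathbb{T}(\mathbf{s})) = \max_i h(u_i)$.

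Step 1 (distributional reduction). By the Dvoretzky--Motzkin cycle lemma, sampling $\mathbb{T}(\mathbf{s})$ under $\mathbb{P}_{\mathbf{s}}$ is equivalent to drawing a uniformly random permutation $\pi$ of the deterministic multiset containing $s^{(k)}$ copies of $k-1$ for each $k \ge 0$ and then taking the unique cyclic shift of the partial sums that stays nonnegative before time $n$. The height is invariant under cyclic shifts, so it becomes a deterministic functional $H(\pi)$ of the random permutation, and the task reduces to estimating the tails of $H(\pi)$.

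Step 2 (martingale concentration). I would apply a Maurey--McDiarmid-type concentration inequality for functions of uniformly random permutations. The Doob martingale that exposes the entries of $\pi$ one at a time has bounded differences controlled by the effect of a single transposition on $H(\pi)$: a transposition of positions $a < b$ with values $X_a$ and $X_b$ shifts $S$ only on $(a, b)$ and only by $X_b - X_a$, and an ancestor-counting argument should bound the resulting change in $H$ by a constant multiple of $|X_a - X_b|$. Summing squares of these Lipschitz constants across the filtration yields a variance proxy of order $\sigma^2(\mathbf{s}) = \sum_k (k-1)^2 s^{(k)}$, and Azuma--Hoeffding then produces a sub-Gaussian tail for $H(\pi)$; the constant $7$ out in front absorbs a union bound over the index $i$ realising the maximum height.

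The sharper factor $1_{\mathbf{s}}^2$ comes from isolating the $s^{(1)}$ unary vertices, whose increments are $0$ and hence do not contribute to the walk. Conditioning on the positions of these zero increments (which is uniform by symmetry), only the non-zero entries of $\pi$ carry information, so the effective Doob martingale has length $n - 1 - s^{(1)}$ rather than $n - 1$, which reduces the denominator in the Azuma variance proxy and produces exactly the factor $(n-2)^2/(n-1-s^{(1)})^2 = 1_{\mathbf{s}}^2$. The main obstacle I expect is the single-swap Lipschitz bound of Step 2: a transposition can a priori reorganise many ancestor-record events at once, and pinning down that the net change in $H(\pi)$ is only proportional to $|X_a - X_b|$ rather than to the length $b - a$ of the intervening interval requires a delicate structural analysis of how the running-minimum times of $S$ move under the swap. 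Once that Lipschitz estimate is in place, the cycle lemma, the unary-vertex conditioning, and Azuma--Hoeffding assemble into the stated bound.
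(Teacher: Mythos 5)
This theorem is not proved in the paper at all: it is imported verbatim as Theorem 1 of \cite{Addario2012}, so the only meaningful comparison is with the proof in that reference, which is organised around the record (ancestor/ladder) structure of the exchangeable-increment Lukasiewicz walk and Chernoff--Hoeffding-type estimates for sampling without replacement, not around a bounded-differences inequality for the height functional. Your Step 1 (cycle-lemma reduction to a uniformly random arrangement of the child multiset, with the height read off the unique admissible rotation) is sound and matches the starting point of both \cite{Addario2012} and Section~\ref{sec:nto1} of this paper; the problem is Step 2.

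The step you yourself flag as the main obstacle is not merely delicate, it is false, and with it the Azuma--McDiarmid plan collapses. Take the degree sequence with $s^{(2)}=1$, $s^{(1)}=n-3$, $s^{(0)}=2$ (so $\sum_i i s^{(i)}=n-1$ and $\sigma^2(\mathbf{s})=n+1$), and the arrangement with child sequence $(2,0,1,1,\dots,1,0)$: the root has one leaf child and one unary chain of length $n-2$, so the height is $n-2$. Transposing the $0$ in position $2$ with the $1$ in position $k\approx n/2$ gives $(2,1,\dots,1,0,1,\dots,1,0)$, i.e.\ two unary chains of lengths about $n/2$ hanging from the root, so the height becomes about $n/2$. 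The swapped walk increments are $-1$ and $0$, so $|X_a-X_b|=1$ while the height moves by about $n/2$. Hence the height is not $O(|X_a-X_b|)$-Lipschitz under a single transposition, the exposure martingale has increments of order $n$ rather than $O(1)$, and no variance proxy of order $\sigma^2(\mathbf{s})$ can be extracted this way. The same example shows your mechanism for the factor $1_{\mathbf{s}}^2$ is backwards: unary vertices contribute zero increments to the walk but one unit of depth each, so the zero entries are precisely what can make the height large (this is why $1_{\mathbf{s}}$ blows up as $s^{(1)}/|\mathbf{s}|\to 1$); conditioning on their positions does not neutralise them, and the factor cannot be obtained merely by shortening the Doob martingale to the $n-1-s^{(1)}$ non-unary entries. (A repaired per-swap bound of order $1_{\mathbf{s}}|X_a-X_b|$ is not excluded by this example, but you neither state nor prove such a bound, and proving it would be the real content of the argument.)

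There is a second, independent gap: even with a correct bounded-differences estimate, Azuma--Hoeffding controls $|H-\e{H}|$, whereas the theorem bounds $\p{h(\mathbb{T}(\mathbf{s}))\ge m}$ for all $m\ge 1$; you would additionally need $\e{h(\mathbb{T}(\mathbf{s}))}=O(\sigma(\mathbf{s})1_{\mathbf{s}})$, which the sketch never addresses and which is itself a substantial part of any proof of this statement. (Also, a union bound over the $n$ indices realising the maximum costs a factor $n$, not a constant $7$.) As it stands, the proposal establishes neither the Lipschitz estimate nor the mean bound, so it does not yield the theorem.
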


The following probability bound on variances of uniformly permuted integer sequences allows us to control the variance of degrees of trees in random forests, and thereby  apply Theorem \ref{thm:louigi_tree_height_tail_bound} to prove Proposition \ref{prop:diameter of small trees}.


\begin{prop}\label{prop: concentration on long intervals}
Fix $c=(c_1, \cdots, c_n)\in \mathbb{N}^n$ and let $\pi$ be a uniformly random permutation of $\{1, \cdots, n\}$. Set $C_i=c_{\pi(i)}$ for $1\le i\le n$, and let $S_j=\sum\limits_{i\le j}C^2_i$ for $1\le i\le n$. Then for all $\lambda\ge 2$ and $1\le k\le n$, with $\Delta=\max\limits_{1\le i\le n}C_i=\max\limits_{1\le i\le n}c_i$, and $\sigma^2(c)=\sum\limits_{i\le n} c^2_i=S_n$, we have \[\p{S_k\ge\lambda\frac{k}{n}S_n}\le\exp\left(-\frac{3\sigma^2(c)}{16n}\cdot\frac{\lambda k}{\Delta^2}\right).\]

\end{prop}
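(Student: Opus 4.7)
The plan is to reduce the question about sampling without replacement to a standard Bernstein-type concentration inequality via Hoeffding's comparison theorem. Writing $X_i=C_i^2$, note that $S_k=\sum_{i\le k}X_i$ is a sum of $k$ draws \emph{without} replacement from the multiset $\{c_1^2,\dots,c_n^2\}\subset[0,\Delta^2]$, with $\mathbf{E}[S_k]=\tfrac{k}{n}S_n$. A classical result of Hoeffding (1963) states that for any convex $\phi$,
\[
\mathbf{E}[\phi(S_k)]\le \mathbf{E}\Bigl[\phi\Bigl(\sum_{i=1}^k Y_i\Bigr)\Bigr],
\]
where $Y_1,\dots,Y_k$ are i.i.d. draws \emph{with} replacement from the same multiset. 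Applying this to $\phi(x)=e^{\theta x}$ for $\theta>0$ shows that every exponential moment bound for the with-replacement sum transfers to $S_k$; in particular, the Chernoff/Bernstein tail bounds apply verbatim.

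Now I would apply Bernstein's inequality to $T_k:=\sum_{i=1}^k Y_i$. Here $Y_i\in[0,\Delta^2]$, $\mathbf{E}[Y_i]=S_n/n$, and
\[
\mathrm{Var}(Y_i)\le \mathbf{E}[Y_i^2]\le \Delta^2\,\mathbf{E}[Y_i]=\Delta^2\,\frac{S_n}{n}.
\]
With $t=(\lambda-1)\tfrac{k}{n}S_n$, Bernstein's inequality yields
\[
\mathbf{P}\bigl(T_k\ge \lambda\tfrac{k}{n}S_n\bigr)\le \exp\!\left(-\frac{t^2/2}{k\,\mathrm{Var}(Y_i)+\Delta^2 t/3}\right)\le \exp\!\left(-\frac{3(\lambda-1)^2\,kS_n}{2n\Delta^2(\lambda+2)}\right),
\]
after substituting the variance bound and simplifying.

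The remaining step is a one-line algebraic check using the hypothesis $\lambda\ge 2$: one verifies $7\lambda^2-18\lambda+8\ge 0$ for $\lambda\ge 2$ (equality at $\lambda=2$, positive derivative thereafter), which is equivalent to
\[
\frac{(\lambda-1)^2}{\lambda+2}\ge \frac{\lambda}{8}.
\]
Substituting this into the Bernstein bound produces the desired
\[
\mathbf{P}(S_k\ge \lambda\tfrac{k}{n}S_n)\le \exp\!\left(-\frac{3\sigma^2(c)}{16n}\cdot\frac{\lambda k}{\Delta^2}\right).
\]

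There is no real obstacle here; the only subtlety is remembering to invoke Hoeffding's reduction so that the variance and boundedness estimates from the i.i.d. setting are legitimate in the without-replacement setting. The bound on $\mathrm{Var}(Y_i)$ using $Y_i\le\Delta^2$ is what allows the exponent to carry the desired factor $\sigma^2(c)/\Delta^2$ rather than the weaker $1/\Delta^4$ one would obtain from a pure Hoeffding bound.
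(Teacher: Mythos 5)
Your proposal is correct and follows essentially the same route as the paper: the paper also reduces sampling without replacement to the with-replacement case via the convex-ordering (dilation) comparison, applies the same Bernstein-type bound with $b=\Delta^2$ and the variance estimate $V\le \frac{k}{n}\Delta^2\sigma^2(c)$, takes $t=(\lambda-1)\frac{k}{n}\sigma^2(c)$, and then uses $\lambda\ge 2$ to simplify (the paper does this in two small steps rather than via your single inequality $\frac{(\lambda-1)^2}{\lambda+2}\ge\frac{\lambda}{8}$, but the content is identical). No gaps.
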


Now let us prove our main theorems with these key results.
\begin{proof}[Proof of Theorem \ref{thm:forest of trees} and Theorem \ref{thm:forest of trees_2}]
By Skorokhod's representation theorem, we may work in a probability space in which the convergence in Proposition \ref{prop: first j converge} is almost sure. Hence Proposition \ref{prop: first j converge} yields that for any fixed $j, ~\sup\limits_{l\le j}d_{GHP}(\mathcal{T}_{\kappa,l}, \mathcal{T}_{|\gamma_l|\mathbf{e}_l})\overset{d}{\to} 0$. This establishes Theorem \ref{thm:forest of trees}.
Now to prove the convergence in $(\mathbb{L}_{\infty}, d_{GHP}^\infty)$, it suffices to prove 
that for any $a>0$, $$\lim\limits_{j\to\infty}\limsup\limits_{\kappa\to\infty} \p{\sup\limits_{l>j} \left(\mathrm{diam}(\mathcal{T}_{\kappa,l})+\mathrm{mass}(\mathcal{T}_{\kappa,l})+\mathrm{diam}(\mathcal{T}_{\gamma_l})+\mathrm{mass}(\mathcal{T}_{\gamma_l})\right)>a}=0.$$
It suffices to separately prove \[\lim\limits_{j\to\infty}\limsup\limits_{\kappa\to\infty} \p{\sup\limits_{l>j} \mathrm{diam}(\mathcal{T}_{\kappa,l})>a}=0, ~  \lim\limits_{j\to\infty}\limsup\limits_{\kappa\to\infty} \p{\sup\limits_{l>j} \mathrm{mass}(\mathcal{T}_{\kappa,l})>a}=0 \]
\[\lim\limits_{j\to\infty} \p{\sup\limits_{l>j} \mathrm{diam}(\mathcal{T}_{\gamma_l})>a}=0, ~ \lim\limits_{j\to\infty} \p{\sup\limits_{l>j} \mathrm{mass}(\mathcal{T}_{\gamma_l})>a}=0.\]

For this purpose, we need to control the probability that small trees having either large diameter or large mass. Note that for a tree its diameter is bounded by twice of its height. 

In fact the mass of tree is easy to control since for any $a>0$ and any $\kappa$,
\begin{eqnarray*}
\p{\sup_{l>j} \mathrm{mass}(\mathcal{T}_{\kappa,l})>a}&=& \p{\sup_{l>j} \frac{|\mathbb{T}_{\kappa,l}|}{\nk}>a}\\
&\le& \p{|\mathbb{T}_{\kappa,j}|>a\nk}=0 \mbox{ for } j>1/a
\end{eqnarray*}
For the diameter we resort to Proposition \ref{prop:diameter of small trees}.

We also need to bound $\mathrm{diam}(\mathcal{T}_{\gamma_l})$ and $\mathrm{mass}(\mathcal{T}_{\gamma_l})$ for $l$ large. Note that $\mathrm{mass}(\mathcal{T}_{\gamma_l})=|\gamma_l|$ and for any $a$, let $j> 1/a$, then $\p{\sup\limits_{l>j}|\gamma_l|>a}=0$.

For $\mathrm{diam}(\mathcal{T}_{\gamma_l}),~ \mathrm{diam}(\mathcal{T}_{\gamma_l})\le 2h(\mathcal{T}_{\gamma_l})=2\max(\gamma_l)$. For $0\le s\le 1$, let \[R(s)=F_{\lambda}(s)-\inf\limits_{s'\in(0,s)}  F_{\lambda}(s')\] and the excursion interval of $\gamma_l$ be $[g_l, d_l]$. Then 
\begin{eqnarray*}
\mathrm{diam}(\mathcal{T}_{\gamma_l}) &\le& 2\sup\limits_{t\in[g_l, d_l]}R(t)= 2 (\sup\limits_{t\in[g_l, d_l]} F_\lambda(t)-\inf\limits_{t\in[g_l, d_l]} F_\lambda(t))\\
&\le& 2\sup\left(|F_\lambda(t)-F_\lambda(s)|: |t-s|\le d_l-g_l\right)
\end{eqnarray*}
and $d_l-g_l=|\gamma_l|\le 1/l$.
So for any $j\ge 1/\epsilon$, \[\sup\limits_{l>j}\mathrm{diam}(\mathcal{T}_{\gamma_l})\le 2\sup\left(|F_\lambda(t)-F_\lambda(s)|: |t-s|\le \epsilon\right)\rightarrow 0 \mbox{ as } \epsilon\rightarrow 0\] since $F_\lambda$ is uniformly continuous. Hence we have the tail insignificance for diameter of $\mathcal{T}_{\gamma_l}$ and the claim is proved.
%
\end{proof}

To conclude this section, we sketch how our paper is organized. In Section \ref{sec:nto1} we investigate a special rotation mapping, which connects the collection of lattice bridges corresponding to certain degree sequence $\mathbf{s}$ and the set of first passage lattice bridges corresponding to $\mathbf{s}$. This will be the key starting point of our work using depth-first walk process to code the structure of random forests with given degree sequences. The combinatorial argument in this section will be also useful for our later work on transferring results such as Proposition \ref{prop: concentration on long intervals} to something similar which is applicable to random forests. This section will be purely combinatorial and only deal with fixed degree sequences. In Section \ref{sec:martingale stuff}, we collect some concentration results using martingale methods. These probability bounds will be useful for checking that the assumptions in Theorem \ref{thm:Broutin-Marckert} are satisfied for large trees of $\mathcal{F}^{\downarrow}_\kappa$. The second part of this section proves the variance bound in Proposition \ref{prop: concentration on long intervals}. Again all results in this section is non-asymptotic and hence are presented with regards to a fixed degree sequence. In Section \ref{sec:convergence of walk}, we prove Theorem \ref{thm:walk convergence}, the convergence of scaled exploration processes to some random process related to first passage bridge, using the rotation mapping in Section \ref{sec:nto1}. We will then get Proposition \ref{prop:length convergence} as a corollary from this weak convergence result. Finally, in Section \ref{sec:proof of prop6 and lem10} we finish the proof of Proposition \ref{prop: first j converge} and Proposition \ref{prop:diameter of small trees} using results from Section \ref{sec:martingale stuff} and Section \ref{sec:convergence of walk}.

\section{An $n-$to$-1$ map transforming lattice bridge to first passage lattice bridge}\label{sec:nto1}

Given a degree sequence $\textbf{s}=(s^{(i)}, i\ge 0)$, let $d(\textbf{s})\in\mathbb{Z}_{\ge 0}^{n(\textbf{s})}$ be the vector whose entries are weakly increasing and with $s^{(i)}$ entries equal to $i$, for each $i\ge 0$. For example, if $\textbf{s}=(3, 2, 0, 1, 0, \cdots)$ with $s^{(i)}=0$ for $i\ge 4$, then $d(\textbf{s})=(0, 0, 0, 1, 1, 3)$. 
Let $\mathrm{D}(\mathbf{s})$ be the collection of all possible child sequences corresponding to degree sequence $\textbf{s}$, i.e., all possible result as a permutation of $d(\textbf{s})$.

A {\em lattice bridge} is a function $b: [0, k]\rightarrow \mathbb{R}$ with $b(0)=0$ and $b(i)\in\mathbb{Z},\ \forall i\in [k]$, which is piecewise linear between integers. Here $k$ is an arbitrary positive integer. We let \[\Lambda(\textbf{s}) = \{b:[0, n(\textbf{s})]\rightarrow \mathbb{R}: b \mbox{ is a lattice bridge and } \forall i\ge 0, |\{j\in\mathbb{N}: b(j+1)-b(j)=i-1\}|= s^{(i)}\}\] and call $\Lambda(\textbf{s})$ the set of {\em lattice bridges corresponding to }$\textbf{s}$. Note that if $b\in\Lambda(\textbf{s})$, then $b(n(\textbf{s}))=-c(\textbf{s})$. Furthermore, we have $$|\Lambda(\textbf{s})|= {n \choose (s^{(i)}, i\ge 0)}=\frac{n!}{\prod\limits_{i\ge 0}s^{(i)}!}$$ since to determine $b\in\Lambda(\textbf{s})$, it suffices to choose the $s^{(0)}$ positions with step size $-1$, $s^{(1)}$ positions with step size 0, $s^{(2)}$ positions with step size 1, etc.

We then let $$F(\textbf{s})=\{b\in\Lambda(\textbf{s}): \inf\limits_{j\le n(\textbf{s})-1} b(j) >-c(\textbf{s})\}$$ and call $F(\textbf{s})$ the collection of {\em first passage lattice bridges corresponding to }$\textbf{s}$.

For $s>0$, let $\mathcal{C}_0(s)=\{x\in C([0,s], \mathbb{R}): x(0)=0\}$. For $u\in[0,s]$, let $\theta_{u,s}:\mathcal{C}_0(s)\rightarrow \mathcal{C}_0(s)$ denote the {\em cyclic shift at $u$}, that is,
$$(\theta_{u,s}(x))(t)=
\left\{
    \begin{array}{ll}
      x(t+u)-x(u), & \hbox{if $t+u\le s$;} \\
      x(t+u-s)+x(s)-x(u), & \hbox{if $t+u\ge s$.}
    \end{array}
  \right.
$$

For $x\in\mathcal{C}_0(s)$ and $y\in\mathbb{R}^-$, let $t(y,x):=\inf\{t\in[0,s]: x(t)\le y\}$ be the first time the graph of $x$ drops below $y$. Sometimes we drop the argument $x$ for convenience and simply write $t(y)$. If $y< \min\limits_{u\in[0,s]}x(u)$ we set $t(y,x)=0$ by convention, so $\theta_{t(y)}(x)=x$.

In what follows, for $k\in\mathbb{N}$ we write $[k]-1=\{0,1,\cdots, k-1\}$. And when the context is clear, we simply drop the subscript $s$ and write $\theta_u$ for $\theta_{u,s}$.

\begin{lem}\label{lem:f_g_j}
For $b\in \Lambda(\textbf{s})$, and for each $j\in [c(\textbf{s})]-1$, we have $\theta_{t(\min(b)+j)}(b)\in F(\textbf{s})$.
\end{lem}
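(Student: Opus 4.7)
The plan is to exploit the fact that every $b \in \Lambda(\mathbf{s})$ has downward steps of size exactly $-1$, since the allowed increments $b(k+1)-b(k) = i - 1$ lie in $\{-1, 0, 1, 2, \ldots\}$. Consequently $b$ cannot skip integer levels on the way down, and for each $j \in \{0, 1, \ldots, c(\mathbf{s})-1\}$ the first hitting time $u_j := t(\min(b) + j, b)$ is an integer at which $b(u_j) = \min(b) + j$ \emph{exactly}, not merely $\le \min(b) + j$. Note also that $u_j \le u_0 = t(\min(b), b)$ because $j \ge 0$, and in particular $b$ still attains its global minimum somewhere in $[u_j, n(\mathbf{s})]$.

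Since cyclic shifts preserve the multiset of increments, $\theta_{u_j}(b) \in \Lambda(\mathbf{s})$ automatically, so what remains is to verify the first-passage property $\inf_{k \le n(\mathbf{s}) - 1} \theta_{u_j}(b)(k) > -c(\mathbf{s})$. Write $n = n(\mathbf{s})$ and split the shifted path into its two natural pieces. On $[0, n - u_j]$ one has $\theta_{u_j}(b)(t) = b(t+u_j) - (\min(b)+j)$; because $u_j \le u_0$, the minimum of $b$ on $[u_j, n]$ is still $\min(b)$, so the minimum of the shifted path on this piece equals $-j$. On $[n - u_j, n]$ one has $\theta_{u_j}(b)(t) = b(t+u_j-n) + b(n) - b(u_j) = b(t+u_j-n) - c(\mathbf{s}) - j - \min(b)$, and minimality of $u_j$ gives $b(s) > \min(b) + j$ strictly for $s < u_j$, so the shifted path is strictly greater than $-c(\mathbf{s})$ at every integer $t < n$, with equality only at $t = n$.

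Combining the two bounds ($-j$ and the strict bound from the second piece), and using $j \le c(\mathbf{s})-1$ to ensure both exceed $-c(\mathbf{s})$, yields the first-passage condition, so $\theta_{u_j}(b) \in F(\mathbf{s})$.

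I do not anticipate a serious obstacle: the argument is essentially a careful unpacking of definitions and the cyclic-shift formula. The one point to handle with care is the use of the unit-down-step property to identify $b(u_j)$ \emph{exactly}, since without that identity the constants on the second piece would not close up cleanly at $t = n$ to give the value $-c(\mathbf{s})$.
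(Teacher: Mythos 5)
Your proof is correct and follows essentially the same route as the paper's: split the cyclically shifted path at $n(\mathbf{s})-t(\min(b)+j)$, bound the first piece using the global minimum of $b$ (giving $-j>-c(\mathbf{s})$ since $j\le c(\mathbf{s})-1$) and the second piece using strict minimality of the first passage time. Your explicit remarks that $b(u_j)=\min(b)+j$ exactly and that the shift lands back in $\Lambda(\mathbf{s})$ are points the paper uses implicitly, but the argument is the same.
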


\begin{proof}
Let $m\le 0$ be the minimum of $b$. Fix an integer $i$ such that $m\le i\le m+c(\textbf{s})-1$ and $u< n(\textbf{s})$. We shall prove that $\theta_{t(i)}(b)(u)>-c(\textbf{s})$, which proves the lemma.
If $0\le u\le n(\textbf{s})-t(i)$, then $\theta_{t(i)}(b)(u)=b(t(i)+u)-b(t(i))\ge m-i>-c(\textbf{s})$. If $n(\textbf{s})-t(i)\le u< n(\textbf{s})$, then $\theta_{t(i)}(b)(u)=b(t(i)+u-n(\textbf{s}))+b(n(\textbf{s}))-b(t(i))=b(t(i)+u-n(\textbf{s}))-c(\textbf{s})-i$. Since $u<n(\textbf{s})$, $t(i)+u-n(\textbf{s})<t(i)$ and we must have $b(t(i)+u-n(\textbf{s}))>i$ by our definition of $t$. Therefore in this case we also have $\theta_{t(i)}(b)(u)>-c(\textbf{s})$. 
\end{proof}

Next, define a function $f: \Lambda(\textbf{s})\times([c(\textbf{s})]-1)\rightarrow F(\textbf{s})$ by $f(b, j):=\theta_{t(\min(b)+j)}(b)$.


\begin{lem}\label{lem:nto1}
$f$ is an $n(\textbf{s})-$to$-1$ map from $\Lambda(\textbf{s})\times ([c(\textbf{s})]-1)$ to $F(\textbf{s})$.
\end{lem}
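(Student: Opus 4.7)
The plan is to prove the lemma by constructing an explicit bijection
$$g: F(\textbf{s}) \times \{1, 2, \ldots, n(\textbf{s})\} \longrightarrow \Lambda(\textbf{s}) \times ([c(\textbf{s})] - 1)$$
whose composition $f \circ g$ equals the projection onto $F(\textbf{s})$. Granted such a $g$, the preimage $f^{-1}(b')$ of any $b' \in F(\textbf{s})$ is in bijection with $\{b'\} \times \{1, \ldots, n(\textbf{s})\}$ and hence has size $n(\textbf{s})$, which is exactly the claim.

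The map $g$ will send $(b', u)$ to $(b, j)$, where $b := \theta_{n(\textbf{s}) - u}(b')$ is the unique cyclic shift of $b'$ satisfying $\theta_u(b) = b'$ (under the identification $\theta_{n(\textbf{s})} = \theta_0 = \mathrm{id}$, so $u = n(\textbf{s})$ gives $b = b'$) and $j := b(u) - \min b$ (with $j := 0$ at $u = n(\textbf{s})$). Automatically $b \in \Lambda(\textbf{s})$, since cyclic shifts preserve the multiset of step sizes. The main obstacle is to verify that this $j$ always lies in $\{0, 1, \ldots, c(\textbf{s}) - 1\}$ and that $t(\min b + j, b) = u$; once those two facts are in hand, the rest is bookkeeping.

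To overcome this obstacle, I would extract two consequences of the hypothesis $b' = \theta_u(b) \in F(\textbf{s})$ by writing out $\theta_u(b)(t) > -c(\textbf{s})$ for $t \in \{1, \ldots, n(\textbf{s}) - 1\}$ and splitting into the cases $t + u \leq n(\textbf{s})$ and $t + u > n(\textbf{s})$, exactly as in the proof of Lemma \ref{lem:f_g_j}. The first case yields $b(s) > b(u) - c(\textbf{s})$ for all $s \in \{u + 1, \ldots, n(\textbf{s})\}$, forcing $\min b > b(u) - c(\textbf{s})$ and hence $j = b(u) - \min b \in \{0, 1, \ldots, c(\textbf{s}) - 1\}$; at the specific value $s = n(\textbf{s})$ it also gives $b(u) < 0$. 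The second case yields $b(s) > b(u)$ for all $s \in \{1, \ldots, u - 1\}$, which together with $b(0) = 0 > b(u)$ says that $u$ is a strict lower-record time of $b$. Since $b$ has steps bounded below by $-1$ (so the walk cannot skip any integer on the way down), this in turn gives $b(u) = \min b + j$ and $t(b(u), b) = u$, as required.

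Finally, for the inverse relation, $h(b, j) := (f(b, j), t(\min b + j, b))$ maps $\Lambda(\textbf{s}) \times ([c(\textbf{s})] - 1)$ into $F(\textbf{s}) \times \{1, \ldots, n(\textbf{s})\}$ by Lemma \ref{lem:f_g_j}, and a direct chase of the definitions (again using the unit-down-step property to conclude $b(t(y, b)) = y$ whenever $t(y, b) \geq 1$) shows $g \circ h = \mathrm{id}$ and $h \circ g = \mathrm{id}$. The boundary value $u = n(\textbf{s})$ forces $b = b' \in F(\textbf{s})$, $\min b = -c(\textbf{s})$, and $j = 0$, which is consistent with all of the formulas since $t(-c(\textbf{s}), b) = n(\textbf{s})$ whenever $b \in F(\textbf{s})$. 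The lemma follows.
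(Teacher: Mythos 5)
Your argument is correct, but it is genuinely different from the paper's. The paper proves the lemma by counting: it first observes (pigeonhole on the shift time) that a point of $F(\textbf{s})$ can have at most $n(\textbf{s})$ preimages, because the value of $t(\min(b)+j)$ together with the image determines $b$ and hence $j$; it then invokes the known enumeration $|F(\textbf{s})|=\frac{c(\textbf{s})}{n(\textbf{s})}\binom{n(\textbf{s})}{(s^{(i)},\, i\ge 0)}$ cited from Pitman (equation (\ref{eqn:number of plane forests})), so that $n(\textbf{s})\,|F(\textbf{s})|=|\Lambda(\textbf{s})\times([c(\textbf{s})]-1)|$ forces every fiber to have exactly $n(\textbf{s})$ elements. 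You instead build an explicit two-sided inverse: given $(b',u)\in F(\textbf{s})\times[n(\textbf{s})]$ you shift back, check via the same case analysis as in Lemma \ref{lem:f_g_j} that $u$ is a strict lower-record time of $b=\theta_{n(\textbf{s})-u}(b')$, that $j=b(u)-\min b$ lies in $[c(\textbf{s})]-1$ (integrality of the path turns $\min b> b(u)-c(\textbf{s})$ into $j\le c(\textbf{s})-1$), and that $t(\min b+j,b)=u$, using the unit-down-step property so that first passage times land exactly on the target level. This is a Vervaat/cycle-lemma style argument; it is fully self-contained and in fact re-derives the counting formula (\ref{eqn:number of plane forests}) as a byproduct, whereas the paper's route is shorter but leans on that cited formula. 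One small expository point: your deduction $\min b> b(u)-c(\textbf{s})$ needs the second case ($b(s)>b(u)$ for $s<u$, together with $b(0)=0>b(u)$) as well as the first, since the minimum may be attained before time $u$; you state both facts, so this is a matter of ordering rather than a gap.
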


\begin{proof}
For $l\in F(\textbf{s})$, if size of preimage of $l$ under $f$ is strictly large than $n(\textbf{s})$, then we must have $b_1, b_2\in \Lambda(\textbf{s}), j_1, j_2\in [c(\textbf{s})]-1$ such that $f(b_1, j_1)=f(b_2, j_2)=l$ and $t(\min(b_1)+j_1)=t(\min(b_2)+j_2)$, since $t$ can only take values in $[n(\textbf{s})]$. By the definition of $f$ we must then have $b_1=b_2$ and hence $j_1=j_2$. Therefore each element in $F(\textbf{s})$ can have at most $n(\textbf{s})$ preimages in $\Lambda(\textbf{s})\times ([c(\textbf{s})]-1)$. On the other hand, we have (see, e.g., \cite{Pitman2006}, page 128) 
\begin{equation}\label{eqn:number of plane forests}
|F(\textbf{s})|=\frac{c(\textbf{s})}{n(\textbf{s})}{n(\textbf{s}) \choose (s^{(i)}, i\ge 0)}=\frac{c(\textbf{s})}{n(\textbf{s})}\frac{n(\textbf{s})!}{\prod\limits_{i\ge 0}s^{(i)}!}.
\end{equation}
Hence $n(\textbf{s})\times |F(\textbf{s})|=c(\textbf{s}) \times|\Lambda(\textbf{s})|=|\Lambda(\textbf{s})\times([c(\textbf{s})]-1)|$, so it must in fact hold that each $l\in F(\textbf{s})$ has exactly $n(\textbf{s})$ preimages.
\end{proof}

Recall the concept of depth-first walk $S_{\mathrm{F}}$ of a plane forest $\mathrm{F}$. For a sequence $\mathbf{c}=(c_1, \cdots, c_n)\in\mathbb{R}^n$, we write $W_{\mathbf{c}}(j)=\sum\limits_{i=1}^j(c_i-1)$ for $j\in [n]$. We let $W_{\mathbf{c}}(0)=0$ and make $W_{\mathbf{c}}$ a continuous function on $[0, n]$ by linear interpolation. Note that $S_{\mathrm{F}}$ is precisely $W_{\mathbf{c}}$ where $\mathbf{c}=(k_{\mathrm{F}}(u_1), \cdots, k_{\mathrm{F}}(u_{|\mathrm{F}|}))$.

For $\mathbf{c}=(c_1, \cdots, c_n)\in\mathbb{R}^n$ and a permutation $\pi$ of $[n]$, write $\pi(\mathbf{c})=(c_{\pi(1)}, \cdots, c_{\pi(n)})$. Also, recall from the beginning of this section that for a degree sequence $\textbf{s}$, $d(\textbf{s})$ is a vector with $s^{(i)}$ entries equal to $i$ for each $i\ge 0$.

\begin{cor}\label{cor:dfswalkofforest}
Let $\textbf{s}$ be a degree sequence. Let $\pi$ be a uniformly random permutation of $[n(\textbf{s})]$ and let $\nu$ be independent of $\pi$ and drawn uniformly at random from $[c(\textbf{s})]-1$.  Then $$f(W_{\pi(d(\textbf{s}))}, \nu)\overset{d}{=}S_{\mathbb{F}(\mathbf{s})},$$ and both are uniformly random elements of $F(\textbf{s})$.
\end{cor}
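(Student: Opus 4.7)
The plan is to verify that both random variables in the claimed equality are uniformly distributed on $F(\mathbf{s})$; equality in distribution then follows for free. I would proceed in two independent threads and combine them at the end.

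For the left-hand side, first observe that $\pi(d(\mathbf{s}))$ realizes each of the $n(\mathbf{s})!/\prod_{i\ge 0}s^{(i)}!$ distinct orderings of the multiset $d(\mathbf{s})$ with equal probability, and the map $\mathbf{c}\mapsto W_{\mathbf{c}}$ is a bijection between those distinct orderings and $\Lambda(\mathbf{s})$ (the bridge is determined by, and determines, the sequence of increments). Hence $W_{\pi(d(\mathbf{s}))}$ is uniform on $\Lambda(\mathbf{s})$, and by independence $(W_{\pi(d(\mathbf{s}))},\nu)$ is uniform on $\Lambda(\mathbf{s})\times([c(\mathbf{s})]-1)$. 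Lemma \ref{lem:nto1} finishes this thread: because $f$ is an $n(\mathbf{s})$-to-$1$ surjection onto $F(\mathbf{s})$, the pushforward of a uniform measure is again uniform, so $f(W_{\pi(d(\mathbf{s}))},\nu)$ is uniform on $F(\mathbf{s})$.

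For the right-hand side, I would invoke the Łukasiewicz/depth-first encoding of plane forests. The map $\mathrm{F}\mapsto S_{\mathrm{F}}$ sends a plane forest with degree sequence $\mathbf{s}$ to a lattice path in $\Lambda(\mathbf{s})$; the standard fact that in a plane tree the depth-first walk reaches a new strict minimum precisely when exploration of a subtree terminates implies that the exploration of the $k$-th tree component of $\mathrm{F}$ ends exactly at the $k$-th strict descending ladder time of $S_{\mathrm{F}}$, so the image lands in $F(\mathbf{s})$. Conversely, given $b\in F(\mathbf{s})$ one reconstructs $\mathrm{F}$ by reading the increments $b(j+1)-b(j)+1$ as the successive depth-first degrees, cutting at the ladder epochs to recover the individual trees. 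This gives a bijection $\mathrm{F}(\mathbf{s})\leftrightarrow F(\mathbf{s})$, and since $\mathbb{F}(\mathbf{s})$ is uniform on $\mathrm{F}(\mathbf{s})$ by definition, $S_{\mathbb{F}(\mathbf{s})}$ is uniform on $F(\mathbf{s})$.

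The two threads together give both claims simultaneously. The only step that demands any real care is the depth-first bijection in the second paragraph; this is classical and the combinatorial identity (\ref{eqn:number of plane forests}) offers a useful sanity check that $|\mathrm{F}(\mathbf{s})|=|F(\mathbf{s})|$, matching the $n(\mathbf{s})$-to-$1$ surjection from Lemma \ref{lem:nto1}.
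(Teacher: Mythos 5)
Your proposal is correct and follows essentially the same route as the paper: uniformity of $(W_{\pi(d(\mathbf{s}))},\nu)$ on $\Lambda(\mathbf{s})\times([c(\mathbf{s})]-1)$ pushed forward through the $n(\mathbf{s})$-to-$1$ map $f$ of Lemma \ref{lem:nto1}, combined with the bijectivity of $\mathrm{F}\mapsto S_{\mathrm{F}}$ between $\mathrm{F}(\mathbf{s})$ and $F(\mathbf{s})$. You merely spell out in more detail the two facts the paper states briefly (the bijection between orderings of $d(\mathbf{s})$ and $\Lambda(\mathbf{s})$, and the depth-first encoding/decoding), which is fine.
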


\begin{proof}
By definition, $(W_{\pi(d(\textbf{s}))}, \nu)$ is uniformly at random in $\Lambda(\textbf{s})\times([c(\textbf{s})]-1)$. By Lemma \ref{lem:nto1}, it follows that $f(W_{\pi(d(\textbf{s}))}, \nu)$ is uniformly random in $F(\textbf{s})$. On the other hand, the map sending plane forest $\mathrm{F}$ to its Lukasiewicz path $S_\mathrm{F}$ restricts to an invertible map from $\mathrm{F}(\textbf{s})$ to $F(\textbf{s})$. Thus, $S_{\mathbb{F}(\mathbf{s})}$ is also uniformly distributed in $F(\textbf{s})$.
\end{proof}

First-passage bridges are naturally connected to plane forests. In a similar way, general lattice bridges are naturally connected to {\em marked} plane forests. This interpretation will be more convenient for some later proofs (Propositions \ref{prop: bad_characterization}, \ref{prop:tree second momonet concentration alpha level} and \ref{prop:key_prop}).

A {\em marked forest} is a pair $(F, v)$ where $F$ is a plane forest and $v\in v(F)$. Sometimes we refer $v$ as the {\em mark} of $(F,v)$.
Recall that $\mathrm{F}(\textbf{s})$ denotes the collection of all plane forests with degree sequence $\textbf{s}$. Let $\mathrm{MF}(\textbf{s})$ be the collection of all marked forests with degree sequence $\textbf{s}$ and for $1\le i\le c(\textbf{s})$, let $\mathrm{MF}^i(\textbf{s})$ be the collection of marked forests $(F,v)\in \mathrm{MF}(\mathbf{s})$ such that the mark $v$ lies within the $i-$th tree of $F$. We define a map $g: \mathrm{MF}(\textbf{s})\rightarrow \mathrm{D}(\textbf{s})$ which lists the degrees of vertices of a marked forest starting from the mark in DFS order. Formally, for $(F, v)\in \mathrm{MF}(\textbf{s})$, if the DFS ordering of $v(F)$ is $v_1, \cdots, v_{n(\textbf{s})}$ and $v=v_i$, then $g((F,v))=(k_{F}(v_i), \cdots, k_{F}(v_{n(\textbf{s})}), k_{F}(v_1), \cdots, k_{F}(v_{i-1}))$. Next define a map $h: \mathrm{MF}(\textbf{s})\rightarrow\mathrm{F}(\textbf{s})$ by $h((F,v))=F$. Then we have the following easy fact.

\begin{lem}\label{lem c_to_1 and n_to_1}
$g$ is a $c(\textbf{s})-$to$-1$ surjective map and for each $1\le i\le c(\mathbf{s}),~g^i:=g\vert_{\mathrm{MF}^i(\mathbf{s})}$ is a bijection between $\mathrm{MF}^i(\mathbf{s})$ and $\mathrm{D}(\mathbf{s})$. Also, $h$ is a $n(\textbf{s})-$to$-1$ surjective map. 
\end{lem}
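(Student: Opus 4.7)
The plan is to dispense with $h$ first --- for each $F \in \mathrm{F}(\mathbf{s})$ the fiber $h^{-1}(F) = \{(F,v) : v \in v(F)\}$ has size $n(\mathbf{s})$, so $h$ is manifestly $n(\mathbf{s})$-to-$1$ and surjective --- and then to focus on $g$.

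For $g$ I will prove the stronger bijection statement for each $g^i$ first; the $c(\mathbf{s})$-to-$1$ surjectivity of $g$ then follows from the disjoint decomposition $\mathrm{MF}(\mathbf{s}) = \bigsqcup_{i=1}^{c(\mathbf{s})} \mathrm{MF}^i(\mathbf{s})$. The starting observation is that if $(F, v) \in \mathrm{MF}(\mathbf{s})$ with $v$ occupying the $k$-th position in the DFS order of $F$, then $W_{g((F,v))} = \theta_{k-1}(S_F)$; equivalently, $S_F = \theta_{n(\mathbf{s})+1-k}(W_{g((F,v))})$, so inverting $g^i$ amounts to choosing the unique cyclic rotation of $W_d$ lying in $F(\mathbf{s})$ whose implied mark position falls inside the $i$-th tree.

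The explicit inverse I have in mind sends $d \in \mathrm{D}(\mathbf{s})$ to the pair $(F, v)$ defined as follows: set $b := W_d$ and $u := t(\min(b) + (i-1), b)$; let $F$ be the unique forest in $\mathrm{F}(\mathbf{s})$ whose Lukasiewicz path equals $\theta_u(b)$ (which lies in $F(\mathbf{s})$ by Lemma~\ref{lem:f_g_j}); and take $v$ to be the vertex at DFS position $k := n(\mathbf{s}) + 1 - u$ in $F$. The key step, which I expect to be the main obstacle, is to verify that $v$ indeed lies in the $i$-th tree of $F$. For this I will use two observations: $\min_{s \in [u, n(\mathbf{s})]} b(s) = \min(b)$ (since the overall minimum of $b$ is attained at or after time $u$) and $\min_{s \in [0, u]} b(s) = \min(b) + (i-1)$ (by the very definition of $u$ as a first-passage time). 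Substituting into the piecewise formula for $\theta_u(b)$, one finds that on $[0, n(\mathbf{s})-u]$ the shifted walk achieves $-(i-1)$ while remaining $\ge -(i-1)$, and that it reaches $-c(\mathbf{s}) \le -i$ at time $n(\mathbf{s})$. Therefore the first-passage time of $\theta_u(b)$ to level $-(i-1)$ is at most $n(\mathbf{s}) - u < k$, and its first-passage time to level $-i$ lies in $(n(\mathbf{s})-u, n(\mathbf{s})]$, so is at least $k$; this places the mark in the $i$-th tree.

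For injectivity of $g^i$ I will appeal to the cycle-lemma count implicit in Lemma~\ref{lem:nto1}: among the $n(\mathbf{s})$ cyclic rotations of $W_d$, exactly $c(\mathbf{s})$ lie in $F(\mathbf{s})$, namely $\theta_{t(\min(b)+j, b)}(b)$ for $j \in \{0, 1, \ldots, c(\mathbf{s})-1\}$, and the level-crossing argument above identifies the rotation corresponding to $j = i - 1$ as the unique one whose mark lies in tree $i$. Hence the rotation, $F$, and $v$ are all determined by $(d, i)$, and $g^i$ is a bijection; assembling the $c(\mathbf{s})$ bijections across $i$ yields the $c(\mathbf{s})$-to-$1$ surjectivity of $g$.
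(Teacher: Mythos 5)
Your proposal is correct, and it takes a more explicit, path-level route than the paper, whose own proof works directly with marked forests: it asserts that for each $d\in\mathrm{D}(\mathbf{s})$ the fiber $g^{-1}(\{d\})$ meets each $\mathrm{MF}^i(\mathbf{s})$ in exactly one element, the $c(\mathbf{s})$ preimages being obtained from one another by cyclically permuting the tree components, and calls the remaining claims (including the one about $h$) straightforward. You instead transport the problem to Lukasiewicz paths via the identity $W_{g((F,v))}=\theta_{k-1}(S_F)$ and build an explicit inverse of $g^i$ out of the rotation machinery of Section \ref{sec:nto1}: the first-passage computation showing that the mark at position $n(\mathbf{s})+1-u$, $u=t(\min(b)+(i-1),b)$, lies in the $i$-th tree (because the hitting times of $-(i-1)$ and of $-i$ by $\theta_u(b)$ straddle that position) is precisely the detail the paper leaves implicit, and you carry it out correctly, including the observations $\min_{[u,n(\mathbf{s})]}b=\min(b)$ and $\min_{[0,u]}b=\min(b)+(i-1)$. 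What your route buys is a concrete, checkable construction that meshes with Lemma \ref{lem:f_g_j} and Corollary \ref{cor:dfswalkofforest}; what it costs is that injectivity now hinges on the claim that the rotations $\theta_{t(\min(b)+j)}(b)$, $j\in[c(\mathbf{s})]-1$, are the \emph{only} rotations of $b=W_d$ lying in $F(\mathbf{s})$.

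That exact-count claim deserves one more line than you give it: it is not literally contained in Lemma \ref{lem:nto1}, which says each element of $F(\mathbf{s})$ has exactly $n(\mathbf{s})$ preimages under $f$ and so does not by itself rule out some particular $b$ having an extra valid rotation outside the family of Lemma \ref{lem:f_g_j}. It does follow by a short double count: the number of pairs $(b,w)$ with $b\in\Lambda(\mathbf{s})$, $w\in\{1,\dots,n(\mathbf{s})\}$ and $\theta_w(b)\in F(\mathbf{s})$ equals $n(\mathbf{s})\,|F(\mathbf{s})|=c(\mathbf{s})\,|\Lambda(\mathbf{s})|$, since for fixed $\ell\in F(\mathbf{s})$ and fixed $w$ the unique solution of $\theta_w(b)=\ell$ is $b=\theta_{n(\mathbf{s})-w}(\ell)\in\Lambda(\mathbf{s})$; as Lemma \ref{lem:f_g_j} already supplies $c(\mathbf{s})$ distinct valid offsets for every $b$, every $b$ has exactly $c(\mathbf{s})$. (Equivalently, $|\mathrm{MF}(\mathbf{s})|=n(\mathbf{s})|\mathrm{F}(\mathbf{s})|=c(\mathbf{s})|\mathrm{D}(\mathbf{s})|$, while your construction already exhibits $c(\mathbf{s})$ distinct preimages of each $d$, one per $i$.) With that line added, your level-crossing argument, applied to every $j\in[c(\mathbf{s})]-1$, places the mark of the $j$-th rotation in tree $j+1$, so each $g^i$ is a bijection and the $c(\mathbf{s})$-to-$1$ surjectivity of $g$ follows from the disjoint decomposition; your treatment of $h$ matches the paper's.
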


\begin{proof}
For $d\in \mathrm{D}(\textbf{s}), |g^{-1}(\{d\})\cap \mathrm{MF}^i(\textbf{s})|=1$ for all $1\le i\le c(\textbf{s})$. In fact, the element of each $g^{-1}(\{d\})\cap \mathrm{MF}^i(\textbf{s})$ can be obtained by cyclically permuting the tree components of the element of $g^{-1}(\{d\})\cap \mathrm{MF}^1(\textbf{s})$. This shows that $g^i$ is a bijection. The other two claims are straightforward.
\end{proof}
The map $g$ being surjective immediately gives the following result.
\begin{cor}\label{cor:marked forest and child sequence}
Let $\mathbb{MF}(\textbf{s})$ be a uniformly random element of $\mathrm{MF}(\textbf{s})$, then $g(\mathbb{MF}(\textbf{s}))$ is a uniformly random element of $\mathrm{D}(\textbf{s})$.
%
\end{cor}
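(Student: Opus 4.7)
The plan is to obtain this as an immediate consequence of the fact, already established in Lemma \ref{lem c_to_1 and n_to_1}, that $g : \mathrm{MF}(\textbf{s}) \to \mathrm{D}(\textbf{s})$ is a $c(\textbf{s})$-to-$1$ surjective map. The general principle being applied is that the pushforward of the uniform measure on a finite set $A$ under a map $\varphi : A \to B$ is uniform on $B$ whenever every fibre of $\varphi$ has the same cardinality.

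Concretely, I would simply compute: for any fixed $d \in \mathrm{D}(\textbf{s})$,
\[
\mathbf{P}\bigl(g(\mathbb{MF}(\textbf{s})) = d\bigr) = \frac{|g^{-1}(\{d\})|}{|\mathrm{MF}(\textbf{s})|} = \frac{c(\textbf{s})}{|\mathrm{MF}(\textbf{s})|},
\]
where the second equality uses the $c(\textbf{s})$-to-$1$ part of Lemma \ref{lem c_to_1 and n_to_1}. Since the right-hand side does not depend on the choice of $d$, the law of $g(\mathbb{MF}(\textbf{s}))$ is uniform on $\mathrm{D}(\textbf{s})$. As a sanity check, this also forces $|\mathrm{MF}(\textbf{s})| = c(\textbf{s}) \cdot |\mathrm{D}(\textbf{s})|$, which agrees with the decomposition $\mathrm{MF}(\textbf{s}) = \bigsqcup_{i=1}^{c(\textbf{s})} \mathrm{MF}^i(\textbf{s})$ together with the bijections $g^i : \mathrm{MF}^i(\textbf{s}) \to \mathrm{D}(\textbf{s})$.

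There is no real obstacle here: the content of the corollary has been entirely absorbed into Lemma \ref{lem c_to_1 and n_to_1}, and only the elementary observation about uniform fibres remains. The only subtlety worth flagging in the write-up is that surjectivity alone (as the sentence preceding the corollary emphasises) is \emph{not} what is used—one genuinely needs the uniform fibre size $c(\textbf{s})$—so I would phrase the one-line proof in terms of $|g^{-1}(\{d\})| = c(\textbf{s})$ rather than surjectivity, to keep the logic transparent.
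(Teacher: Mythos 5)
Your proof is correct and is essentially the argument the paper intends: the corollary is presented as an immediate consequence of Lemma \ref{lem c_to_1 and n_to_1}, and your explicit fibre-counting computation $\mathbf{P}(g(\mathbb{MF}(\textbf{s}))=d)=c(\textbf{s})/|\mathrm{MF}(\textbf{s})|$ is precisely what makes it immediate. Your observation that the constant fibre size $c(\textbf{s})$, rather than mere surjectivity as the paper's preceding sentence suggests, is the property actually being used is a fair and correct sharpening of the wording.
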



\section{Concentration results}\label{sec:martingale stuff}
In the first part of this section, we deal with a martingale concerning the proportion of a fixed degree of uniformly permuted degree sequence. This will be useful for proving Proposition \ref{prop: first j converge} in Section \ref{sec:proof of prop6 and lem10} where we need to first show that the degree proportions in each large trees of $\mathcal{F}^{\downarrow}_\kappa$ are more or less in line with the degree proportion of the given degree sequences. The second part of this section deals with the variance bound of uniformly permuted child sequences, which leads to a key technical proposition on the height of tree components of $\mathbb{F}(\mathbf{s})$. For both subsections we will use concentration results from \cite{McDiarmid1998}.

Let $\mathbf{s}=(s^{(i)}, i\ge 0)$ with $|\mathbf{s}|=n$ be a fixed degree sequence and let $\mathbf{C}=(C_1, \cdots, C_{n})$ denote the uniformly permuted child sequence $\pi(d(\mathbf{s}))$ (recall the notation from Section \ref{sec:nto1}), where $\pi$ is a uniform random permutation of $[n]$. For each $i\ge 0$, let $q^{(i)}=s^{(i)}/n$ be the degree proportion of degree $i$ of $\mathbf{s}$.

\subsection{Martingales of degree proportions of uniformly permuted degree sequence}\label{sec:degree_proportion_martingale}
In this subsection, we introduce some martingales concerning proportions of particular degree appeared at each step in a uniformly permuted degree sequence and use them and martingale concentration inequality from \cite{McDiarmid1998} as tools to prove Lemma \ref{lem: bad event small} and Proposition \ref{prop: bad_characterization}, which are useful for eventually proving that the empirical degree distributions of large trees of $\mathbb{F}_\kappa$ behave well (Proposition \ref{prop:proportion and second moments convergence}).
We first recall the following martingale bound in \cite{McDiarmid1998}. Let $\{X_i\}_{i=0}^n$ be a bounded martingale adapted to a filtration $\{\mathcal{F}_i\}_{i=0}^n$. Let $V=\sum\limits_{i=0}^{n-1}var\{X_{i+1}~|~\mathcal{F}_i\},$ where
$$var\{X_{i+1}~|~\mathcal{F}_i\}:=\e{(X_{i+1}-X_i)^2~|~\mathcal{F}_i}=\e{X_{i+1}^2~|~\mathcal{F}_i}-X_i^2.$$ Let $$v=\mbox{ess sup }V, \mbox{ and }b=\max\limits_{0\le i\le n-1}\mbox{ess sup}(X_{i+1}-X_i~|~\mathcal{F}_i).$$
Then we have the following bound.
\begin{thm}[\cite{McDiarmid1998}, Theorem 3.15]\label{thm:concentration}
For any $t\ge 0$,
$$\p{\max\limits_{0\le i\le n} X_i\ge t}\le \exp \left(-\frac{t^2}{2v(1+bt\backslash(3v))}\right).$$
\end{thm}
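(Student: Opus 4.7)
The statement is a classical Freedman/Bernstein-type maximal inequality for bounded martingales, and the plan is to reproduce the standard Chernoff–supermartingale argument. Write $D_i = X_{i+1}-X_i$, so by hypothesis $D_i \le b$ a.s.\ and $\mathbf{E}[D_i^2 \mid \mathcal{F}_i] = \operatorname{var}(X_{i+1} \mid \mathcal{F}_i) =: \sigma_i^2$, with $\sum_{i=0}^{n-1}\sigma_i^2 = V \le v$ a.s. I will fix a parameter $\lambda \in (0, 3/b)$ and build an exponential supermartingale whose compensator captures the conditional variance, then invoke Doob's maximal inequality, and finally optimize $\lambda$.

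The first step is a one-step moment-generating-function estimate. Using the elementary inequality
\[
e^{x} - 1 - x \;\le\; \frac{x^{2}/2}{1 - x/3} \qquad (0 \le x < 3),
\]
applied termwise to the Taylor expansion of $e^{\lambda D_i}$ (the $k=0,1$ terms vanish after taking $\mathbf{E}[\,\cdot\,\mid\mathcal{F}_i]$ since $\mathbf{E}[D_i\mid\mathcal{F}_i]=0$, and $\mathbf{E}[D_i^k\mid\mathcal{F}_i] \le b^{k-2}\sigma_i^2$ for $k\ge 2$), one obtains
\[
\mathbf{E}\!\left[e^{\lambda D_i}\,\big|\, \mathcal{F}_i\right] \;\le\; \exp\!\left(\frac{\lambda^{2}\sigma_i^{2}/2}{1-\lambda b/3}\right).
\]
Writing $\psi(\lambda) := \lambda^{2}/(2(1-\lambda b/3))$, this says that
\[
Y_i \;:=\; \exp\!\bigl(\lambda X_i - \psi(\lambda) V_i\bigr), \qquad V_i := \sum_{j<i}\sigma_j^{2},
\]
is a nonnegative supermartingale with $Y_0 = 1$.

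Next I apply Doob's maximal inequality to the nonnegative submartingale $Z_i := \exp(\lambda X_i)$. Since $V_i \le v$ a.s.\ for every $i$, the event $\{\max_i X_i \ge t\}$ is contained in $\{\max_i Y_i \ge e^{\lambda t - \psi(\lambda)v}\}$, so Doob combined with the supermartingale bound $\mathbf{E} Y_i \le 1$ yields
\[
\mathbf{P}\!\left(\max_{0\le i \le n} X_i \ge t\right) \;\le\; \exp\!\bigl(-\lambda t + \psi(\lambda)v\bigr) \;=\; \exp\!\left(-\lambda t + \frac{\lambda^{2}v/2}{1-\lambda b/3}\right).
\]

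The final step is the standard optimization: choose $\lambda = t/(v + bt/3)$, which lies in $(0,3/b)$. A direct substitution (writing $u = bt/(3v)$ and simplifying $-\lambda t + \lambda^{2}v/(2(1-\lambda b/3))$) collapses the exponent to $-t^{2}/\bigl(2v(1 + bt/(3v))\bigr)$, giving the stated bound.

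The main obstacle is the one-step MGF estimate: one has to control $\mathbf{E}[e^{\lambda D_i}\mid\mathcal{F}_i]$ using only an upper bound on the conditional variance (not an equality) and the one-sided bound $D_i \le b$, which forces the use of the moment inequality $\mathbf{E}[D_i^k \mid \mathcal{F}_i] \le b^{k-2}\sigma_i^2$ for $k\ge 2$ inside the series expansion of $e^{\lambda D_i} - 1 - \lambda D_i$; once that is set up cleanly, the remainder is algebra and the application of Doob's inequality.
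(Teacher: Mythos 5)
Your overall architecture is the right one (and is how this inequality is actually proved in the literature that the paper cites: exponential supermartingale, Doob's maximal inequality, then optimizing $\lambda=t/(v+bt/3)$ — that algebra checks out). However, there is a genuine gap in the one step you yourself flag as the crux. The theorem's hypothesis is one-sided: $b$ is the essential supremum of the increments $D_i=X_{i+1}-X_i$, not of $|D_i|$, so $D_i$ may take negative values of magnitude far exceeding $b$. Your moment bound $\mathbf{E}[D_i^k\mid\mathcal{F}_i]\le b^{k-2}\sigma_i^2$ is then false for even $k\ge 4$: take $D_i$ equal to $+\epsilon$ with probability close to $1$ and $-M$ with small probability (chosen so the conditional mean is $0$), with $M\gg b=\epsilon$; then $\mathbf{E}[D_i^4\mid\mathcal{F}_i]\approx qM^4$ while $b^2\sigma_i^2\approx \epsilon^2 qM^2$, so the claimed inequality fails by a factor $(M/\epsilon)^2$. (For odd $k$ the pointwise bound $D_i^k\le b^{k-2}D_i^2$ does hold, but the even terms are exactly where the one-sidedness bites.) As written, your termwise Taylor argument therefore only proves the theorem under the stronger two-sided hypothesis $|D_i|\le b$, which is not what is assumed.

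The standard repair is local and leaves the rest of your proof untouched: instead of bounding individual moments, use that $x\mapsto (e^{\lambda x}-1-\lambda x)/x^2$ is nondecreasing, so for $D_i\le b$ one has $e^{\lambda D_i}\le 1+\lambda D_i+\frac{e^{\lambda b}-1-\lambda b}{b^2}D_i^2$ pointwise; taking $\mathbf{E}[\,\cdot\mid\mathcal{F}_i]$ gives $\mathbf{E}[e^{\lambda D_i}\mid\mathcal{F}_i]\le \exp\bigl(\sigma_i^2(e^{\lambda b}-1-\lambda b)/b^2\bigr)$, and then the scalar inequality $e^{u}-1-u\le \frac{u^2/2}{1-u/3}$ applied to $u=\lambda b<3$ yields exactly your claimed one-step estimate with $\psi(\lambda)=\lambda^2/(2(1-\lambda b/3))$. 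With that substitution, your supermartingale $Y_i$, the event inclusion using $V_i\le v$, Doob's inequality, and the optimization are all correct and complete the proof. (Note also that the statement implicitly has $X_0=0$, which you use when setting $Y_0=1$; in the paper's application this holds since $M_0^{(i)}=0$.)
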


For fixed $i$, for $0\le j\le n-1$, 
let $Y^{(i)}_j=|\{1\le l\le j: C_l=i\}|$ and let $X^{(i)}_j=s^{(i)}-Y^{(i)}_j$. Note that for $j>0$ 
\[X_j^{(i)}=\left\{
                  \begin{array}{ll}
                    X_{j-1}^{(i)}-1, & \mbox{ if }C_j=i; \\
                    X_{j-1}^{(i)}, & \mbox{ otherwise.}
                  \end{array}
                \right.\]

Let $\mathcal{F}_j$ be the $\sigma-$field generated by $C_1, \cdots, C_j$. 

\begin{lem}\label{lem:martingale}
Let $M_j^{(i)}:=\frac{X_j^{(i)}}{n-j}-q^{(i)}$, then

(a)\ $M_j^{(i)}$ is an $\mathcal{F}_j-$martingale;

(b)\ The {\em predictable quadratic variation} of $M_{j+1}^{(i)}$ satisfies

 $var\{M_{j+1}^{(i)}~|~\mathcal{F}_{j}\}:=\e{{M_{j+1}^{(i)}}^2~|~\mathcal{F}_{j}}-{M_{j}^{(i)}}^2\le \frac{1}{4}\frac{1}{(n-(j+1))^2}.$
\end{lem}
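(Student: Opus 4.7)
The plan is to treat the two parts in order, both by direct conditional computation, exploiting the fact that given $\mathcal{F}_j$, the next term $C_{j+1}$ is uniformly distributed over the $n-j$ elements of $d(\mathbf{s})$ not yet placed among $C_1,\dots,C_j$.

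For part (a), I would first observe that the above sampling-without-replacement description gives
\[\mathbb{P}(C_{j+1}=i\mid \mathcal{F}_j)=\frac{X_j^{(i)}}{n-j},\]
since $X_j^{(i)}$ is exactly the number of $i$'s remaining. From the recursion $X_{j+1}^{(i)}=X_j^{(i)}-\mathbb{1}[C_{j+1}=i]$ I would then compute
\[\mathbb{E}[X_{j+1}^{(i)}\mid \mathcal{F}_j]=X_j^{(i)}\cdot\frac{n-j-1}{n-j},\]
and dividing by $n-j-1$ and subtracting $q^{(i)}$ recovers $M_j^{(i)}$, establishing the martingale property.

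For part (b), since $M^{(i)}$ is a martingale, the predictable quadratic variation is $\mathbb{E}[(M_{j+1}^{(i)}-M_j^{(i)})^2\mid\mathcal{F}_j]$. Writing $p=X_j^{(i)}/(n-j)$ and computing the two possible increments directly (with $m=n-j$): on $\{C_{j+1}=i\}$ the increment is $\frac{X_j^{(i)}-1}{m-1}-\frac{X_j^{(i)}}{m}=\frac{p-1}{m-1}$, and on the complement the increment is $\frac{X_j^{(i)}}{m-1}-\frac{X_j^{(i)}}{m}=\frac{p}{m-1}$. Taking conditional expectation with weights $p$ and $1-p$ gives
\[\mathrm{var}\{M_{j+1}^{(i)}\mid\mathcal{F}_j\}=\frac{p(1-p)^2+(1-p)p^2}{(m-1)^2}=\frac{p(1-p)}{(n-j-1)^2}.\]
The claim then follows from the elementary inequality $p(1-p)\le 1/4$ on $[0,1]$.

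There is no serious obstacle here; the entire argument reduces to the sampling-without-replacement identity for the conditional distribution of $C_{j+1}$ plus a two-case algebraic computation. The only care needed is to normalize by $n-j$ rather than $n$ so that the bias from removing one of $s^{(i)}$ copies is absorbed into the martingale centering; this is exactly what the denominator $n-j$ in the definition of $M_j^{(i)}$ accomplishes.
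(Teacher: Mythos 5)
Your proposal is correct and follows essentially the same route as the paper: part (a) is the identical computation via $\p{C_{j+1}=i~|~\mathcal{F}_j}=X_j^{(i)}/(n-j)$, and part (b) arrives at the same exact expression $\frac{X_j^{(i)}(n-j-X_j^{(i)})}{(n-j-1)^2(n-j)^2}$ before applying $p(1-p)\le 1/4$. The only cosmetic difference is that you compute the conditional variance through the two possible increments $\frac{p-1}{m-1}$ and $\frac{p}{m-1}$ (legitimate since the increment form equals $\e{{M_{j+1}^{(i)}}^2~|~\mathcal{F}_j}-{M_j^{(i)}}^2$ once (a) is established), whereas the paper expands $\e{{X_{j+1}^{(i)}}^2~|~\mathcal{F}_j}$ directly; your version is marginally cleaner algebra but the same argument.
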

\begin{proof}
(a)\ Since $q^{(i)}$ is a constant, it suffices to show that $\frac{X_j^{(i)}}{n-j}$ is an $\mathcal{F}_j-$martingale. In fact
\begin{eqnarray*}
\e{X^{(i)}_{j+1}~|~\mathcal{F}_j} &=& X^{(i)}_j-\p{C_{j+1}=i~|~\mathcal{F}_j}\\
&=& X^{(i)}_j-\frac{X^{(i)}_j}{n-j},
\end{eqnarray*}
so \[\e{\frac{X_{j+1}^{(i)}}{n-(j+1)}~|~\mathcal{F}_j}=\frac{X^{(i)}_j}{n-j-1}(1-\frac{1}{n-j})=\frac{X^{(i)}_j}{n-j}.\]
Thus $\frac{X^{(i)}_j}{n-j}$ is an $\mathcal{F}_j-$martingale.

(b)\ By definition, we have
\begin{eqnarray*}
   & & var\{M_{j+1}^{(i)}~|~\mathcal{F}_{j}^{(i)}\}=\e{{M_{j+1}^{(i)}}^2~|~\mathcal{F}_{j}}-{M_{j}^{(i)}}^2 \\
&=& \frac{\e{{X_{j+1}^{(i)}}^2~|~\mathcal{F}_{j}}}{(n-(j+1))^2}-\frac{{X_{j}^{(i)}}^2}{(n-j)^2}
\end{eqnarray*}

Now we substitute
\[\e{{X_{j+1}^{(i)}}^2~|~\mathcal{F}_{j}}=(X_{j}^{(i)}-1)^2\frac{X_{j}^{(i)}}{n-j}+{X_{j}^{(i)}}^2\cdot\frac{n-j-X_{j}^{(i)}}{n-j}={X_{j}^{(i)}}^2-\frac{2{X_{j}^{(i)}}^2}{n-j}+\frac{X_{j}^{(i)}}{n-j}\]
in the above result and obtain 
\begin{eqnarray*}
var\{M_{j+1}^{(i)}~|~\mathcal{F}_{j}\}
&=& \frac{{X_{j}^{(i)}}^2}{(n-(j+1))^2}-\frac{{X_{j}^{(i)}}^2}{(n-j)^2}-\frac{2{X_{j}^{(i)}}^2}{(n-j)(n-(j+1))^2}+\frac{X_{j}^{(i)}}{(n-j)(n-(j+1))^2} \\
&=& \frac{(2(n-j)-1){X_{j}^{(i)}}^2}{(n-(j+1))^2(n-j)^2}-\frac{2{X_{j}^{(i)}}^2}{(n-j)(n-(j+1))^2}+\frac{X_{j}^{(i)}}{(n-j)(n-(j+1))^2}\\
&=& \frac{X_{j}^{(i)}(n-j-X_{j}^{(i)})}{(n-(j+1))^2(n-j)^2}\le  \frac{1}{4}\cdot\frac{1}{(n-(j+1))^2},
\end{eqnarray*}
which gives the claim.
\end{proof}
Now we can apply Theorem \ref{thm:concentration}.
\begin{prop}\label{prop: martingale_concentration}
For any $t>0$ and $0<s<n$, we have
\begin{equation}\label{eqn:concentration for all interval larger than log}
\p{\max\limits_{0\le j\le n-s}|q^{(i)}-\frac{X_{j}^{(i)}}{n-j}|\ge t}\le\exp\left(-\frac{3st^2}{3+2t}\right).
\end{equation} 
\end{prop}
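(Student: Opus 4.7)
The plan is to apply McDiarmid's martingale concentration bound (Theorem 3.15 in the excerpt) to the martingale $(M_j^{(i)})_{0 \le j \le n-s}$ of Lemma 3.1(a) and, by symmetry, to $(-M_j^{(i)})_{0 \le j \le n-s}$, then combine the two one-sided tail bounds via a union bound to obtain the stated two-sided estimate on $|q^{(i)} - X_j^{(i)}/(n-j)| = |M_j^{(i)}|$.

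The first step is to bound the summed predictable quadratic variation $V$. Using Lemma 3.1(b) and the elementary estimate $\sum_{k \ge s} k^{-2} \le 1/s + \int_s^\infty x^{-2}\,dx = 2/s$, I would compute
\begin{equation*}
V = \sum_{j=0}^{n-s-1}\mathrm{var}\{M_{j+1}^{(i)} \mid \mathcal{F}_j\} \le \frac{1}{4}\sum_{j=0}^{n-s-1}\frac{1}{(n-(j+1))^2} = \frac{1}{4}\sum_{k=s}^{n-1}\frac{1}{k^2} \le \frac{1}{2s},
\end{equation*}
so I may take $v = 1/(2s)$ in the notation of Theorem 3.15.

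The second step is to bound the essential supremum $b$ of the martingale increments $M_{j+1}^{(i)} - M_j^{(i)}$. A direct case analysis on whether $C_{j+1} = i$ or $C_{j+1} \neq i$, together with the obvious bound $0 \le X_j^{(i)} \le n-j$, shows
\begin{equation*}
|M_{j+1}^{(i)} - M_j^{(i)}| \le \frac{1}{n-(j+1)} \le \frac{1}{s}
\end{equation*}
for every $j \le n-s-1$, so I may take $b = 1/s$. In the case $C_{j+1} = i$ the increment equals $(X_j^{(i)} - (n-j))/((n-j)(n-(j+1)))$, which is non-positive of modulus at most $1/(n-(j+1))$; in the other case the increment is $X_j^{(i)}/((n-j)(n-(j+1)))$, subject to the same bound.

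With $v = 1/(2s)$ and $b = 1/s$, Theorem 3.15 yields
\begin{equation*}
\p{\max_{0 \le j \le n-s} M_j^{(i)} \ge t} \le \exp\!\left(-\frac{t^2}{2v + 2bt/3}\right) = \exp\!\left(-\frac{t^2}{1/s + 2t/(3s)}\right) = \exp\!\left(-\frac{3st^2}{3+2t}\right),
\end{equation*}
and the same bound holds for $\max_j(-M_j^{(i)})$ by applying the identical argument to the martingale $-M^{(i)}$. Combining the two tails gives the claim. No substantive obstacle is expected: Lemma 3.1 has already done the martingale bookkeeping, and what remains is the routine verification of the hypotheses of Theorem 3.15, the only computation of note being the crude series bound $\sum_{k \ge s} k^{-2} \le 2/s$.
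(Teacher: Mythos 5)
Your proposal is correct and follows essentially the same route as the paper: bound the cumulative conditional variance of $(M_j^{(i)})_{0\le j\le n-s}$ by $1/(2s)$ via Lemma \ref{lem:martingale}(b), bound the increments by $1/s$ through the same two-case analysis, and apply Theorem \ref{thm:concentration} to both $M^{(i)}$ and $-M^{(i)}$. The only (shared, harmless) imprecision is that combining the two one-sided tails by a union bound would strictly give an extra factor of $2$, which the paper's own statement also omits.
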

\begin{proof}
Fix $s<n$, and consider the martingale $\{M^{(i)}_{j}\}_{j=0}^{n-s}$. By Lemma \ref{lem:martingale}(b), we know that
\[V=\sum\limits_{j=1}^{n-s}var\{M^{(i)}_{j}~|~\mathcal{F}_{j-1}\}\le\frac{1}{4}\sum\limits_{j=0}^{n-s-1}\frac{1}{(n-(j+1))^2}\le\frac{1}{4}\int\limits^{n-1}_{s-1}\frac{1}{x^2}dx\le\frac{1}{2s}.\] Hence $v=\mbox{ess sup }V\le \frac{1}{2s}$. Also, for $j\le n-s-1$, if $X^{(i)}_{j+1}=X^{(i)}_j$, then \[|M^{(i)}_{j+1}-M^{(i)}_j|=\frac{X^{(i)}_j}{(n-j)(n-j-1)}\le \frac{1}{s},\] and if $X^{(i)}_{j+1}=X^{(i)}_j-1$, then
\[|M_{j+1}^{(i)}-M_{j}^{(i)}|= |\frac{X_{j}^{(i)}-1}{n-(j+1)}-\frac{X_{j}^{(i)}}{n-j}|=|\frac{X_{j}^{(i)}}{(n-(j+1))(n-j)}-\frac{1}{n-(j+1)}|\le \frac{1}{s}.\]
Applying Theorem \ref{thm:concentration} to both $\{M^{(i)}_j\}^{n-s}_{j=0}$ and $\{-M^{(i)}_j\}^{n-s}_{j=0}$ gives $$\p{\max\limits_{0\le j\le n-s}\left|q^{(i)}-\frac{X_{j}^{(i)}}{n-j}\right|\ge t}\le\exp\left(-\frac{t^2}{\frac{1}{s}+\frac{2t}{3s}}\right),$$ as claimed.
\end{proof}
Now we give a probability bound of proportion of certain degree $i$ deviates from $q^{(i)}$ by an error of at least $\epsilon$.
\begin{lem}\label{lem: bad event small}
For fixed $i\in\mathbb{N}$ and $\epsilon>0$, let $B^{\epsilon,i}=\{\exists x\ge\log^3 n: |Y^{(i)}_{x}-q^{(i)} x|\ge\epsilon x\}$.
Then for any $n$ large enough such that $\frac{\sqrt{5}}{\log n}<\epsilon<1,~ \p{B^{\epsilon, i}} \le n^{-3}.$
\end{lem}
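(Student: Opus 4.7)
The plan is to recast $B^{\epsilon,i}$ as a deviation event for the martingale $M^{(i)}$ of Lemma \ref{lem:martingale} and then to apply the concentration inequality of Theorem \ref{thm:concentration} after dyadically splitting the range of $x$. The starting identity
\[(n-x)M^{(i)}_x \,=\, q^{(i)} x - Y^{(i)}_x,\]
obtained by substituting $X^{(i)}_x = s^{(i)}-Y^{(i)}_x = q^{(i)} n - Y^{(i)}_x$ into the definition of $M^{(i)}_x$, gives $|Y^{(i)}_x - q^{(i)} x| = (n-x)|M^{(i)}_x|$, so
\[B^{\epsilon,i} \,=\, \bigl\{\exists\, x \in \mathbb{Z}\cap[\log^3 n,\, n-1] : |M^{(i)}_x| \ge \epsilon x/(n-x)\bigr\}.\]

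I would then chop this range into dyadic blocks $I_k = [2^k, 2^{k+1})$ for $\lceil 3\log_2\log n\rceil \le k \le \lceil\log_2 n\rceil$. For the blocks with $2^{k+1} \le n/2$, membership of $x \in I_k$ in the event forces $|M^{(i)}_j| \ge \epsilon 2^k/n$ for some $j \le 2^{k+1}$, and I would control this by applying Theorem \ref{thm:concentration} to $\{M^{(i)}_j\}_{j\le 2^{k+1}}$ (and to its negative) using the refined prefix-variance estimate
\[V_k \,=\, \sum_{j=0}^{2^{k+1}-1}\mathrm{var}\{M^{(i)}_{j+1}\mid\mathcal{F}_j\} \,\le\, \frac{2^{k+1}}{4(n-2^{k+1})^2}\]
that follows from Lemma \ref{lem:martingale}(b) — rather than the blunter uniform bound $1/(2s)$ used in the proof of Proposition \ref{prop: martingale_concentration} — together with the one-step bound $|M^{(i)}_{j+1}-M^{(i)}_j| \le 1/(n-2^{k+1})$. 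A Bernstein-type calculation then gives $p_k := \p{\max_{j \le 2^{k+1}}|M^{(i)}_j| \ge \epsilon 2^k/n} \le 2\exp(-c\,\epsilon^2\,2^k)$ for an absolute $c>0$. For the remaining blocks $2^{k+1} > n/2$, the distributional symmetry $|Y^{(i)}_x - q^{(i)}x| \eqdist |Y^{(i)}_{n-x} - q^{(i)}(n-x)|$, coming from reversing the permutation, translates the event into requiring $|M^{(i)}_{n-x}| \ge \epsilon$ at the small time $n-x \le n/2$; this constant-scale deviation is controlled super-polynomially by $\exp(-\Omega(\epsilon^2 n))$ via Proposition \ref{prop: martingale_concentration} itself.

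A union bound over the $O(\log n)$ blocks yields $\p{B^{\epsilon,i}} \le O(\log n)\cdot 2\exp(-c\,\epsilon^2\log^3 n)$; the hypothesis $\epsilon^2 > 5/\log^2 n$ makes the exponent at least $5c\log n$, so the sum is $\le n^{-3}$ for $n$ large enough once $c$ is chosen with $5c > 3$. The main obstacle is the sharpening step itself: Proposition \ref{prop: martingale_concentration} as literally stated produces only an exponent of order $\epsilon^2\,4^k/n$ at the smallest dyadic scale $2^k \sim \log^3 n$, which is trivial when $4^k \ll n$. Recovering the Bernstein-type exponent $\epsilon^2\,2^k$ that actually matches the threshold $\sqrt{5}/\log n$ in the hypothesis requires reopening the proof of that proposition and reapplying Theorem \ref{thm:concentration} with the truncated variance $V_k$ above in place of $1/(2s)$, and this is where the technical weight of the argument sits.
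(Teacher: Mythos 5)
Your route is genuinely different from the paper's, and the comparison is instructive. The paper does not touch the varying threshold $\epsilon x/(n-x)$ at all: it first applies the reversal symmetry to the \emph{entire} range, observing that $\{\exists\, j\ge \log^3 n: |Y^{(i)}_j-q^{(i)}j|\ge \epsilon j\}$ has the same probability as $\{\exists\, l\le n-\log^3 n: |X^{(i)}_l-q^{(i)}(n-l)|\ge\epsilon(n-l)\}$, i.e.\ as the \emph{constant}-threshold event $\{\max_{l\le n-\log^3 n}|M^{(i)}_l|\ge\epsilon\}$. For that event the ``blunt'' bound $v\le 1/(2s)$ is exactly the right quantity, and Proposition \ref{prop: martingale_concentration} applied once with $s=\log^3 n$, $t=\epsilon$ gives $\exp\bigl(-\tfrac{3\epsilon^2\log^3 n}{3+2\epsilon}\bigr)\le \exp(-3\log n)=n^{-3}$, using $\epsilon<1$ and $\epsilon^2\log^2 n>5$. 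You in fact use this very reversal, but only for the blocks with $2^{k+1}>n/2$; applied globally it makes the dyadic decomposition, the truncated variances $V_k$, and the reopening of Proposition \ref{prop: martingale_concentration} unnecessary. So the ``technical weight'' you locate in sharpening that proposition is precisely what the paper's one-line symmetry step eliminates.

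Beyond being heavier than needed, your final calibration does not close as written. Your union bound requires an absolute constant $c$ with $5c>3$, but the bounds you state ($V_k\le 2^{k+1}/4(n-2^{k+1})^2$, one-step increments $\le 1/(n-2^{k+1})$, threshold $\epsilon 2^k/n$) fed into Theorem \ref{thm:concentration} give an exponent $\epsilon^2 2^k\big/\bigl(\tfrac{n^2}{(n-2^{k+1})^2}+\tfrac{2\epsilon n}{3(n-2^{k+1})}\bigr)$, i.e.\ $c=3/16$ for a generic block with $2^{k+1}\le n/2$, and at best $c=\tfrac{3}{3+2\epsilon}(1-o(1))$ at the bottom block --- which is not bounded above $3/5$ uniformly in $\epsilon<1$. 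Together with the factor $2$ from two-sidedness, the union over blocks, and the fact that with $k_0=\lceil 3\log_2\log n\rceil$ the blocks do not cover $[\log^3 n,\,2^{k_0})$, your argument does not yield $n^{-3}$ in the boundary regime $\epsilon^2\log^2 n$ close to $5$. The gap is repairable: for fixed $\epsilon$ (which is how the lemma is quantified and used) any absolute $c>0$ suffices for $n$ large because $\epsilon^2\log^2 n\to\infty$, and for a bound uniform in $\epsilon$ one can split into $\epsilon\le 0.9$ (where $\tfrac{3}{3+2\epsilon}$ has genuine slack over $3/5$) and $\epsilon>0.9$ (where $\epsilon^2\log^3 n\gg\log n$); but as stated the ``choose $c$ with $5c>3$'' step is not supported by your computation, whereas the paper's single application of Proposition \ref{prop: martingale_concentration} after reversal hits the constant $3/(3+2\epsilon)$ exactly, which is what the $\sqrt{5}/\log n$ hypothesis is calibrated to.
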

\begin{proof}
By symmetry, the event $\{\exists j\ge \log^3 n: |Y^{(i)}_{j}-q^{(i)} j|\ge\epsilon j\}$ has the same distribution as the event $\{\exists l\le n-\log^3 n: |X^{(i)}_{l}-q^{(i)}(n-l)|\ge\epsilon(n-l)\}$. Hence we can write \[\p{B^{\epsilon, i}}=\p{\max\limits_{0\le l\le n-\log^3 n} |q^{(i)}-\frac{X^{(i)}_{l}}{n-l}|\ge\epsilon}.\] 
Taking $s=\log^3 n, t=\epsilon$ in (\ref{eqn:concentration for all interval larger than log}), the result follows.
\end{proof}

Now we consider how degrees distribute among the tree components of the random forest $\mathbb{F}(\mathbf{s})$.
Write $\mathbb{F}(\mathbf{s})^{\downarrow}=(\mathbb{T}_{l},~ l\ge 1)$. Let $\mathbf{s}_{l}=(s^{(i)}_{l}, i\ge 0)$ denote the (empirical) degree sequence of the $l-$th largest tree $\mathbb{T}_{l}$, and let $\mathbf{n}_{l}=n(\mathbf{s}_{l})$. Recall that $q^{(i)}=s^{(i)}/n$ and let $q_{l}^{(i)}=s^{(i)}_{l}/\mathbf{n}_{l}$ be the empirical proportion of degree $i$ vertices of $\mathbb{T}_{l}$; if $\mathbb{F}(\mathbf{s})$ has fewer than $l$ trees then $q^{(i)}_l=0$. Note that $q^{(i)}$ is deterministic while $q_{l}^{(i)}$ is random. 
\begin{prop}\label{prop: bad_characterization}
For fixed $\epsilon>0$ and $i, l$, let $B^{\epsilon, i}_{l}=\{|q_{l}^{(i)}-q^{(i)}|>\epsilon\}$. Then for fixed $\epsilon>0, i\in\mathbb{N}$, we have \begin{equation}\label{eqn:bad event small}
\p{\bigcup\limits_{l:~|\mathbb{T}_l|>n^{1/4}}B^{\epsilon, i}_{l}}\le n\p{B^{\epsilon, i}}.
\end{equation} 
\end{prop}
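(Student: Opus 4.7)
The plan is to re-index the union on the left by DFS order rather than size order---the set of trees is the same, only the labeling changes. Writing $\mathrm{T}_j$ for the $j$-th tree of $\mathbb{F}(\mathbf{s})$ in DFS order, with size $k_j$, let $A_j$ be the event that $k_j>n^{1/4}$ and the empirical proportion of degree-$i$ vertices in $\mathrm{T}_j$ deviates from $q^{(i)}$ by more than $\epsilon$. Then $\bigcup_{l:|\mathbb{T}_l|>n^{1/4}}B^{\epsilon,i}_l=\bigcup_j A_j$, so by a union bound it suffices to show $\sum_j\p{A_j}\le n\,\p{B^{\epsilon,i}}$.

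The main step exploits the marked-forest bookkeeping from Section~\ref{sec:nto1}. Let $v$ be a vertex chosen uniformly from $v(\mathbb{F}(\mathbf{s}))$ conditionally on $\mathbb{F}(\mathbf{s})$; then $\mathbb{MF}(\mathbf{s}):=(\mathbb{F}(\mathbf{s}),v)$ is uniform on $\mathrm{MF}(\mathbf{s})$, so by Corollary~\ref{cor:marked forest and child sequence} the sequence $C:=g(\mathbb{MF}(\mathbf{s}))$ is uniform on $D(\mathbf{s})$---equidistributed with the $\pi(d(\mathbf{s}))$ on which $B^{\epsilon,i}$ is defined. Let $p\in\{0,1,\ldots,n-1\}$ denote the DFS position of $v$ minus one; by construction $p$ is uniform and independent of $\mathbb{F}(\mathbf{s})$. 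Writing $\tau_{j-1}$ for the DFS position of the last vertex visited before tree $j$ begins (so $\tau_0=0$ and $\tau_j-\tau_{j-1}=k_j$), the key observation is that when $p=\tau_{j-1}$ the definition of $g$ makes the first $k_j$ entries of $C$ coincide with the child sequence of $\mathrm{T}_j$ read in DFS order; on this event $Y^{(i)}_{k_j}$ equals the degree-$i$ count of $\mathrm{T}_j$, and since $k_j>n^{1/4}\ge\log^3 n$ (valid for $n$ sufficiently large) and $|Y^{(i)}_{k_j}-q^{(i)}k_j|>\epsilon k_j$, the event $B^{\epsilon,i}$ is realized. Hence $A_j\cap\{p=\tau_{j-1}\}\subseteq B^{\epsilon,i}$.

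To finish, note that $A_j$ is measurable with respect to $\mathbb{F}(\mathbf{s})$ while $p$ is uniform and independent of $\mathbb{F}(\mathbf{s})$, so $\p{A_j\cap\{p=\tau_{j-1}\}}=\p{A_j}/n$; the events $\{p=\tau_{j-1}\}$ are disjoint across $j$ since $\tau_0<\tau_1<\cdots<\tau_{c(\mathbf{s})-1}$. Therefore
\[\sum_j\p{A_j}=n\sum_j\p{A_j\cap\{p=\tau_{j-1}\}}=n\,\p{\bigsqcup_j\bigl(A_j\cap\{p=\tau_{j-1}\}\bigr)}\le n\,\p{B^{\epsilon,i}}.\]
I expect the only substantive hurdle to be setting up the right coupling between $\mathbb{F}(\mathbf{s})$ and the uniformly permuted child sequence underlying $B^{\epsilon,i}$ via the uniform mark, and spotting that the particular ``mark is the first vertex of tree $j$'' event has conditional probability $1/n$ and lines up the prefix of $C$ with the degrees of $\mathrm{T}_j$; once this is in place, the rest is just union bound plus disjointness.
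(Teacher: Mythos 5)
Your argument is correct, and it rests on the same core ingredient as the paper's proof --- the marked-forest coupling of Corollary~\ref{cor:marked forest and child sequence}, which turns degree counts of trees into prefix counts of a uniformly permuted child sequence --- but you extract the factor $n$ by a genuinely different decomposition. The paper decomposes over the $n$ cyclic starting positions: for each $j\in[n]$ it introduces the event $\tilde B^{\epsilon,i}_j$ that some cyclic window of length $>n^{1/4}$ beginning at position $j$ has a bad degree-$i$ proportion, argues by a two-case analysis (mark inside or outside the offending tree) that a bad large tree forces some $\tilde B^{\epsilon,i}_j$, and then uses exchangeability to get $\p{\cup_j \tilde B^{\epsilon,i}_j}\le n\p{\tilde B^{\epsilon,i}_1}\le n\p{B^{\epsilon,i}}$. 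You instead decompose over the trees in DFS order and exploit the fact that the DFS position of the mark is uniform and independent of $\mathbb{F}(\mathbf{s})$, so that $\p{A_j\cap\{p=\tau_{j-1}\}}=\p{A_j}/n$ exactly; on that event the first $k_j$ entries of $g(\mathbb{F}(\mathbf{s}),v)$ are precisely the degrees of tree $j$, so $A_j\cap\{p=\tau_{j-1}\}$ lies in the copy of $B^{\epsilon,i}$ built from $C$, and disjointness of the events $\{p=\tau_{j-1}\}$ across $j$ finishes the count. Your bookkeeping avoids the paper's wraparound-window case analysis and in fact gives the slightly stronger conclusion $\sum_j\p{A_j}\le n\,\p{B^{\epsilon,i}}$, not merely a bound on the union; both proofs share the same implicit requirement $n^{1/4}\ge\log^3 n$ (you flag it explicitly; the paper needs it for its final inequality $\p{\tilde B^{\epsilon,i}_1}\le\p{B^{\epsilon,i}}$), which is harmless since the proposition is only applied for large $n$.
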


\begin{proof}
Let $V$ be a uniformly random vertex of $\mathbb{F}(\mathbf{s})$, then $(\mathbb{F}(\mathbf{s}), V)$ is uniformly distributed in $\mathrm{MF}(\mathbf{s})$. List the nodes of $\mathbb{F}(\mathbf{s})$ in cyclic lexicographic order as $V=V_1, V_2, \cdots, V_n$, and for $i\le n$ let $C_i$ be the degree of $V_i$. By Corollary \ref{cor:marked forest and child sequence}, the sequence $(C_1, \cdots, C_n)=g(\mathbb{F}(\mathbf{s}), V)$ is uniformly distributed in $\mathrm{D}(\mathbf{s})$; in other words, it is distributed as a uniformly random permutation of $d(\mathbf{s})$. For any $1\le j\le n$, let $\tilde{B}^{\epsilon, i}_j$ be the event that there exists $m>n^{1/4}$ such that \[|\frac{\#\{1\le t\le m: C_{j+t~(\mathrm{mod}~n)}=i\}}{m}-q^{(i)}|>\epsilon.\] Since $(C_1, \cdots, C_n)$ is uniformly distributed in $\mathrm{D}(\mathbf{s})$, it is immediate that $\p{\tilde{B}^{\epsilon, i}_1}=\cdots=\p{\tilde{B}^{\epsilon, i}_n}$. Suppose a tree $T\in\mathbb{F}(\mathbf{s})$ with $|T|> n^{1/4}$ has that \[|\frac{\#\{u: k_T(u)=i\}}{|T|}-q^{(i)}|>\epsilon.\] If $V$ is not a node of $T$, then there exists $m>n^{1/4}, 0<j\le n-m$ such that \[V(T)=\{V_{j+1}, \cdots, V_{j+m}\},~ |\frac{\#\{1\le t\le m: C_{j+t}=i\}}{m}-q^{(i)}|>\epsilon.\] If $V$ is a node of $T$, then there exists $m>n^{1/4}, j>n-m$ such that \[V(T)=\{V_{j+1}, \cdots, V_{n}, V_1, \cdots, V_{j+m-n}\}, ~ |\frac{\#\{t\ge j+1 \mbox{ or } t\le j+m-n: C_t=i\}}{m}-q^{(i)}|>\epsilon.\]In either case we must have $\tilde{B}^{\epsilon, i}_j$ true for some $1\le j\le n$. Therefore \[\p{\bigcup\limits_{l:~|\mathbb{T}_l|>n^{1/4}}B^{\epsilon, i}_{l}}\le n\p{\tilde{B}^{\epsilon, i}_1}\le n\p{B^{\epsilon, i}},\] which gives the claim.
%
\end{proof}

\subsection{Probability bound of trees of random forest having abnormally large height}\label{sec:prob bound of having trees of abnormally large height}
In this subsection, we prove tail bounds on the heights of trees in $\mathbb{F}(\textbf{s})$, by first proving tail bounds on the sums of squares of the child sequences. This will be used in proving Proposition \ref{prop:diameter of small trees} in Section \ref{sec:proof of prop6 and lem10}. To be more specific, let $c=(c_1, c_2, \cdots, c_n)\in \mathrm{D}(\mathbf{s})$ be a child sequence with $\sigma^2(\textbf{s}):=\sum\limits_{i=1}^n c_i^2=\sum\limits_i i^2 s^{(i)}$ and write $M:=\sigma^2(\textbf{s})/n$ and $\Delta=\Delta(\mathbf{s}):=\max\limits_{i} c_i$. Recall that $C_1, C_2, \cdots, C_n$ are the uniformly permuted child sequence and let $S_j:=\sum\limits_{i\le j}C^2_i$. 
We will use the following theorem from \cite{McDiarmid1998}.
\begin{thm}[Theorem 2.7 in \cite{McDiarmid1998}]\label{thm:concentration_2_7}
Let random variables $X^\ast_1, \cdots, X^\ast_n$ be independent, with $X^\ast_k-\e{X^\ast_k}\le b$ for each $k$. Let $S^\ast_n=\sum X^\ast_k$, and let $S^\ast_n$ have expected value $\mu$ and variance $V$ (the sum of the variances of $X^\ast_k$). Then for any $t\ge 0$, with $\epsilon=bt/V$, we have
\[\p{S^\ast_n-\mu\ge t} \le \exp\left(-\frac{V}{b^2}((1+\epsilon)\ln(1+\epsilon)-\epsilon)\right) \le \exp\left(-\frac{t^2}{2V+2bt/3}\right).\]
\end{thm}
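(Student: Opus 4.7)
The plan is to use the classical Cram\'er--Chernoff exponential moment method. Write $Y_k := X^\ast_k - \e{X^\ast_k}$, so that the $Y_k$ are independent centered random variables with $Y_k \le b$ and $\sum_k \e{Y_k^2} = V$. For any $s > 0$, Markov's inequality applied to $e^{s(S^\ast_n - \mu)}$ gives
\[\p{S^\ast_n - \mu \ge t} \le e^{-st}\prod_{k=1}^n \e{e^{sY_k}},\]
so the task reduces to bounding each moment generating function tightly in terms of $\e{Y_k^2}$ and then optimizing over $s$.

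The key MGF estimate comes from the fact that, for fixed $s > 0$, the function $\phi(x) := (e^{sx}-1-sx)/x^2$ is nondecreasing in $x$, since its power series representation has nonnegative coefficients. Because $Y_k \le b$, this yields the pointwise bound $e^{sY_k} \le 1 + sY_k + (Y_k^2/b^2)(e^{sb}-1-sb)$. Taking expectation and using $\e{Y_k}=0$, $\e{Y_k^2} = v_k$, and the elementary inequality $1+u \le e^u$, I obtain $\e{e^{sY_k}} \le \exp(v_k b^{-2}(e^{sb}-1-sb))$. Multiplying over $k$ gives $\prod_k \e{e^{sY_k}} \le \exp(Vb^{-2}(e^{sb}-1-sb))$, and therefore
\[\p{S^\ast_n - \mu \ge t} \le \exp\!\left(\frac{V}{b^2}(e^{sb}-1-sb) - st\right)\]
for every $s>0$.

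It remains to optimize. Setting the $s$-derivative of the exponent to zero yields $Vb^{-1}(e^{sb}-1) = t$, equivalently $e^{sb} = 1 + bt/V = 1+\epsilon$ and $s_\ast = b^{-1}\ln(1+\epsilon)$. Substituting back and collecting terms produces the Bennett form
\[\p{S^\ast_n - \mu \ge t} \le \exp\!\left(-\frac{V}{b^2}\bigl((1+\epsilon)\ln(1+\epsilon) - \epsilon\bigr)\right).\]
For the weaker Bernstein-type bound, I would then appeal to the elementary inequality $(1+\epsilon)\ln(1+\epsilon) - \epsilon \ge \epsilon^2/(2 + 2\epsilon/3)$, valid for all $\epsilon \ge 0$; plugging in $\epsilon = bt/V$ and clearing denominators produces exactly the $t^2/(2V + 2bt/3)$ bound.

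The hard part will be the two small calculus ingredients that make the scheme work. First, the monotonicity of $\phi(x) = (e^{sx}-1-sx)/x^2$ is what allows the one-sided condition $Y_k \le b$ (with no matching lower bound) together with the variance to dominate the full MGF; I would verify this by reading off the power series $\phi(x) = \sum_{j\ge 0} s^{j+2} x^j/(j+2)!$ and noting that every coefficient is nonnegative. Second, the Bennett-to-Bernstein comparison $(1+\epsilon)\ln(1+\epsilon) - \epsilon \ge \epsilon^2/(2+2\epsilon/3)$ can be handled by clearing denominators and checking that the resulting function of $\epsilon$ vanishes to order three at $\epsilon=0$ with nonnegative derivative for $\epsilon\ge 0$.
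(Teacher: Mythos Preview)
The paper does not prove this theorem; it is quoted from McDiarmid \cite{McDiarmid1998} and used as a black box. The only comment the paper makes about its proof is that it ``proceeds by bounding the quantity $\e{\exp(h(S^\ast_n-\mu))}$'', which is exactly the Cram\'er--Chernoff route you take. So your outline is the standard Bennett argument and matches what the paper alludes to.

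One genuine (if minor) gap: your justification that $\phi(x)=(e^{sx}-1-sx)/x^2$ is nondecreasing ``since its power series representation has nonnegative coefficients'' only establishes monotonicity on $[0,\infty)$. Nonnegative power-series coefficients do not force monotonicity on the negative axis (consider $x\mapsto x^2$), and in the application $Y_k$ certainly takes negative values since it is centered. The inequality $\phi(y)\le\phi(b)$ for all $y\le b$ is true, but the case $y<0$ needs its own argument. One clean fix is to show directly that $(e^{u}-1-u)/u^{2}\le 1/2$ for $u\le 0$ (equivalent to $1+u+u^2/2\ge e^u$, which follows from two differentiations), so that $\phi(y)\le s^2/2=\phi(0)\le\phi(b)$; alternatively, differentiate $\phi$ and check the sign of $sx e^{sx}-2(e^{sx}-1-sx)$ on each half-line. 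With that patched, the rest of your sketch (the optimization in $s$ and the Bennett-to-Bernstein comparison) is correct.
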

Since $C_1, C_2, \cdots, C_k$ are sampled without replacement from the population $c_1, c_2, \cdots, c_n$, we may not directly apply Theorem \ref{thm:concentration_2_7}. We address this issue as follows.

Recall (or see, e.g., \cite{Aldous1985}) that given real random variables $U, V$, we say $U$ is a \emph{dilation} of $V$ if there exist random variables $\hat{U}, \hat{V}$ such that \[\hat{U}\overset{d}{=}U,~ \hat{V}\overset{d}{=}V \mbox{ and } \e{\hat{U}|\hat{V}}=\hat{V}.\]

\begin{prop}[Proposition 20.6 in \cite{Aldous1985}]\label{prop:dilution}
Suppose $X_1, \cdots, X_k$ and $X^\ast_1, \cdots, X^\ast_k$ are samples from the same finite population $x_1,\cdots, x_n$, without replacement and with replacement, respectively. Let $S_k=\sum\limits_{i=1}^k X_i, S^\ast_k=\sum\limits_{i=1}^k X^\ast_i$. Then $S^\ast_k$ is a dilation of $S_k$. In particular, $\e{\phi(S^\ast_k)}\ge\e{\phi(S_k)}$ for all continuous convex function $\phi:\mathbb{R}\rightarrow\mathbb{R}$.
\end{prop}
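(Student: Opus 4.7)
The plan is to reduce Proposition \ref{prop:dilution} to the convex-order inequality
$\e{\phi(S^\ast_k)} \ge \e{\phi(S_k)}$ for every continuous convex $\phi\colon\mathbb{R}\to\mathbb{R}$ (which is exactly the ``in particular'' clause of the proposition), and then invoke Strassen's theorem to upgrade the convex order to the required dilation coupling: random variables $\hat{U}\eqdist S^\ast_k$ and $\hat{V}\eqdist S_k$ with $\e{\hat{U}\mid \hat{V}}=\hat{V}$. Since Strassen's theorem is standard, the real work is in establishing the convex order.

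For the convex-order inequality I would follow the classical swap argument of Hoeffding. Write $X^\ast_i = x_{J_i}$ with $J_1,\ldots,J_k$ i.i.d.\ uniform on $\{1,\ldots,n\}$, and $X_i = x_{\pi(i)}$ with $\pi$ a uniform random permutation of $\{1,\ldots,n\}$, independent of $(J_i)$. Walk through the with-replacement sample one index at a time, and whenever $J_j$ collides with some earlier $J_i$ (with $i<j$), replace $J_j$ by a fresh uniform draw from $\{1,\ldots,n\}\setminus\{J_1,\ldots,J_{j-1}\}$. After finitely many iterations all indices are distinct, and the resulting sample has the distribution of a uniform without-replacement sample of size $k$, whose partial sum is distributed as $S_k$. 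The technical heart of the argument is the claim that each individual swap is, in a precise sense, a mean-preserving contraction of the partial sum; given this, the conditional Jensen inequality shows that $\e{\phi(\text{partial sum})}$ can only decrease at each swap, and iterating gives $\e{\phi(S^\ast_k)} \ge \e{\phi(S_k)}$.

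The main obstacle is making the mean-preservation step literally true: a uniform draw on $\{1,\ldots,n\}$ and a uniform draw on the complement $\{1,\ldots,n\}\setminus\{J_1,\ldots,J_{j-1}\}$ need not have equal conditional means of $x_{\cdot}$, so a naive per-position swap shifts the conditional mean of the partial sum. The standard remedy is to randomize \emph{which} colliding position to resample, choosing uniformly at random among the duplicated positions so that the swap is symmetric under exchanging the roles of the coinciding indices; after this symmetrization, the change in the partial sum at each step is mean-zero conditional on the pre-swap state, and Jensen delivers the one-step inequality. I would therefore organize the argument around a symmetrized single-swap lemma (stating that symmetrically re-drawing any one colliding index produces a partial sum that is dominated in convex order by the previous one), and then iterate it finitely many times to convert $S^\ast_k$ into a copy of $S_k$ while only decreasing $\e{\phi(\cdot)}$. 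Strassen's theorem then upgrades the resulting convex order to the dilation coupling, completing the proof of the proposition.
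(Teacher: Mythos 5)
On the comparison itself: the paper gives no proof of this proposition --- it is imported verbatim from Aldous's Saint-Flour notes (Proposition 20.6), and the paper only ever uses the convex-order consequence (with $\phi(x)=e^{hx}$) to transfer McDiarmid's bound to sampling without replacement. The standard direct argument builds the dilation in one stroke: draw the coincidence pattern of the with-replacement sample (the random partition of $\{1,\dots,k\}$ recording which draws collide) independently of a without-replacement sample, assign the without-replacement values to the blocks in order of first appearance, and check by exchangeability that $\e{S^\ast_k \mid S_k}=S_k$; no Strassen is needed. Your architecture --- convex order by a swap induction, then Strassen to upgrade to a dilation --- is legitimate in principle, but heavier, and its key step is misstated.

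The genuine problem is the direction of the conditional-mean property in your ``technical heart.'' If the change in the sum were mean-zero conditional on the \emph{pre}-swap state, then writing $T$ for the pre-swap sum and $T'$ for the post-swap sum you would have $\e{T'\mid \mathcal{F}_{\mathrm{pre}}}=T$, and conditional Jensen gives $\phi(T)\le \e{\phi(T')\mid\mathcal{F}_{\mathrm{pre}}}$, hence $\e{\phi(T)}\le\e{\phi(T')}$; iterating from the with-replacement sample down to the without-replacement one would yield $\e{\phi(S^\ast_k)}\le\e{\phi(S_k)}$, the reverse of the claim. Moreover, the symmetrization does not produce that property anyway: given a pre-swap collision at a value $v$, the expected change is the average of the $x$-values over the admissible complement minus $x_v$, whichever duplicated position you resample. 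What the symmetrized swap actually satisfies --- and what you need --- is the identity conditioned on the \emph{post}-swap configuration: given the repaired tuple $(a_1,\dots,a_{j+1})$, the two collision scenarios (the resampled position was $j+1$ and previously duplicated $a_i$, or it was position $i$ and previously held $a_{j+1}$) carry equal conditional weight and opposite increments $x_{a_i}-x_{a_{j+1}}$ and $x_{a_{j+1}}-x_{a_i}$, so the pre-swap sum has conditional mean equal to the post-swap sum; then $\phi$ of the post-swap sum is at most the conditional expectation of $\phi$ of the pre-swap sum, and the induction gives $\e{\phi(S_k)}\le\e{\phi(S^\ast_k)}$. So the one-step lemma you announce at the end (post-swap sum dominated in convex order by the pre-swap one) is correct, but its proof must condition on the post-swap state, not the pre-swap state; you also need to carry through the induction that after each stage the repaired prefix is uniform over distinct tuples and independent of the untouched i.i.d.\ tail. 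With those repairs your route goes through, at the cost of invoking Strassen, which the direct coupling above renders unnecessary.
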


The proof of Theorem \ref{thm:concentration_2_7}, in \cite{McDiarmid1998}, proceeds by bounding the quantity $\e{\exp(h(S^\ast_n-\mu))}$, where $h$ is any real number. By Proposition \ref{prop:dilution}, we have $\e{\exp(h(S_n-\mu))}\le\e{\exp(h(S^\ast_n-\mu)}$, which means that the proof applies mutatis mutandis in the setting of sampling without replacement.
\begin{cor}
Let $X_1, \cdots, X_k$ be samples from finite population $x_1,\cdots, x_n$, without replacement, with $X_1-\e{X_1}\le b$. Let $S_k=\sum\limits_{i=1}^k X_i, V=\sum\limits_{i=1}^k \mathrm{Var}(X_i)$ and $\mu_k=\e{S_k}$. Then for any $t\ge 0$, with $\epsilon=bt/V$, we have 
\begin{equation}\label{eqn:concentration_2_7}
\p{S_k-\mu_k \ge t} \le \exp\left(-\frac{V}{b^2}((1+\epsilon)\ln(1+\epsilon)-\epsilon)\right) \le \exp\left(-\frac{t^2}{2V+2bt/3}\right).
\end{equation}
\end{cor}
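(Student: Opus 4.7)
The plan is exactly the argument foreshadowed in the paragraph immediately preceding the statement: transport McDiarmid's moment generating function estimate from the with-replacement setting to the without-replacement setting via the dilation bound of Proposition \ref{prop:dilution}.

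First I would record the preliminary identities that make the comparison meaningful. Each draw-without-replacement $X_i$ has the same marginal law as the corresponding draw-with-replacement $X^\ast_i$, namely the uniform law on $\{x_1,\ldots,x_n\}$. Consequently $\e{X_i}=\e{X^\ast_i}$ and $\mathrm{Var}(X_i)=\mathrm{Var}(X^\ast_i)$, so the quantities $\mu_k$ and $V=\sum_{i=1}^k\mathrm{Var}(X_i)$ defined in the statement agree with the corresponding quantities in Theorem \ref{thm:concentration_2_7}, and the hypothesis $X_1-\e{X_1}\le b$ is equivalent to $X^\ast_1-\e{X^\ast_1}\le b$.

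The core step is then, for a fixed parameter $h\ge 0$, to apply the exponential Markov inequality
\[\p{S_k-\mu_k\ge t}\le e^{-ht}\,\e{e^{h(S_k-\mu_k)}},\]
and to combine it with the dilation inequality
\[\e{e^{h(S_k-\mu_k)}}\le \e{e^{h(S^\ast_k-\mu_k)}},\]
which is an instance of Proposition \ref{prop:dilution} applied to the continuous convex function $x\mapsto e^{h(x-\mu_k)}$. The right-hand side is exactly the quantity that McDiarmid bounds inside his proof of Theorem \ref{thm:concentration_2_7}: by independence of the $X^\ast_i$ together with the single-variable estimate $\e{e^{h(X^\ast_i-\e{X^\ast_i})}}\le \exp\bigl(\mathrm{Var}(X^\ast_i)(e^{hb}-1-hb)/b^2\bigr)$, one obtains $\e{e^{h(S^\ast_k-\mu_k)}}\le \exp\bigl(V(e^{hb}-1-hb)/b^2\bigr)$. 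Optimizing $h\ge 0$ at $h=b^{-1}\ln(1+\epsilon)$ with $\epsilon=bt/V$ produces the first inequality in (\ref{eqn:concentration_2_7}); the second follows from the standard numerical bound $(1+\epsilon)\ln(1+\epsilon)-\epsilon\ge \epsilon^2/(2+2\epsilon/3)$ recorded inside McDiarmid's proof.

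There is no real obstacle here: McDiarmid's argument is a Chernoff-type bound that never uses the joint law of the $X^\ast_i$ beyond the product formula for the MGF, and that product formula is used only to produce the exponential upper bound on $\e{e^{h(S^\ast_k-\mu_k)}}$ which is then transferred to $\e{e^{h(S_k-\mu_k)}}$ by Proposition \ref{prop:dilution}. All remaining steps in McDiarmid's derivation are deterministic manipulations of real-valued functions and carry over verbatim.
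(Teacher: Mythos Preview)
Your proposal is correct and follows exactly the approach the paper sketches in the paragraph immediately preceding the corollary: use Proposition \ref{prop:dilution} to bound $\e{e^{h(S_k-\mu_k)}}$ by $\e{e^{h(S^\ast_k-\mu_k)}}$, then run McDiarmid's Chernoff argument verbatim. You have simply fleshed out the details (matching marginals, the explicit single-variable MGF bound, the optimal choice of $h$, and the numerical inequality for the second bound) that the paper leaves implicit in its ``mutatis mutandis'' remark.
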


Now we get our probability bound on the deviations of $(S_k, k\le n)$.

\begin{proof}[Proof of Proposition \ref{prop: concentration on long intervals}]
We apply (\ref{eqn:concentration_2_7}); we have 
$\mu_k=\e{S_k}=\frac{k}{n}S_n, b=\Delta^2,$ \[V=\sum\limits_{i=1}^k \mathrm{Var}(C_i^2)\le k \e{C_1^4}=\frac{k}{n}\sum\limits_{i=1}^n c^4_i\le\frac{k}{n}\Delta^2\sigma^2(c)=k\Delta^2M,\] where $M=\sigma^2(c)/n$. For $\lambda>1$, taking $t=(\lambda-1)\frac{k}{n}\sigma^2(c)$, we obtain 
\begin{eqnarray*}
\p{S_k\ge\lambda\frac{k}{n}S_n}&=&\p{S_k-\mu_k\ge(\lambda-1)kM}\\
&\le&\exp\left(-\frac{((\lambda-1)kM)^2}{2k\Delta^2M+\frac{2}{3}\Delta^2(\lambda-1)kM}\right) 
\end{eqnarray*}

Using the assumption $\lambda\ge 2$ twice, we have \begin{eqnarray*}
\p{S_k\ge\lambda\frac{k}{n}S_n}&\le&\exp\left(-\frac{((\lambda-1)kM)^2}{\frac{8}{3}(\lambda-1)\Delta^2 kM}\right)\\
&=&\exp\left(-\frac{3(\lambda-1)kM}{8\Delta^2}\right)\le\exp\left(-\frac{3M}{16}\cdot\frac{\lambda k}{\Delta^2}\right)=\exp\left(-\frac{3\sigma^2(c)}{16n}\cdot\frac{\lambda k}{\Delta^2}\right),
\end{eqnarray*}
which finishes the proof.
\end{proof}

Using results from Section \ref{sec:nto1}, we now have the following estimate on variance of tree components of $\mathbb{F}(\mathbf{s})$.
For a tree $T$, we let $\sigma^2(T)=\sum\limits_{u\in T} k_T(u)^2$. 

\begin{prop}\label{prop:tree second momonet concentration alpha level}
Let $\textbf{s}=(s^{(i)}, i\ge 0)$ be a degree sequence with $|\textbf{s}|=n$ and $M=\sigma^2(\textbf{s})/n$. Then for $\lambda\ge 4, \alpha>\Delta^2(\textbf{s})/n$,
\begin{equation}\label{eqn:concentration_large_tree}
\p{\exists T \in \mathbb{F}(\textbf{s}): |T|\le \alpha n, \sigma^2(T)\ge\lambda \alpha \sigma^2(\textbf{s})}\le \frac{2}{\alpha}\exp(-\frac{3M}{16}\lambda).
\end{equation}
\end{prop}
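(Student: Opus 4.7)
The plan is to use the marked-forest framework of Section~\ref{sec:nto1} to reduce the existence of a bad tree in $\mathbb{F}(\mathbf{s})$ to an extremal property of a short cyclic window in a uniformly random permutation of $d(\mathbf{s})$, and then invoke Proposition~\ref{prop: concentration on long intervals} after an overlapping-block covering.

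First I pass to the marked forest space: sample $(\mathbb{F}(\mathbf{s}),V)$ uniformly on $\mathrm{MF}(\mathbf{s})$ and set $\mathbf{C}=(C_1,\ldots,C_n)=g(\mathbb{F}(\mathbf{s}),V)$. By Corollary~\ref{cor:marked forest and child sequence}, $\mathbf{C}$ is a uniformly random permutation of $d(\mathbf{s})$, so it has the same law as the sequence considered in Proposition~\ref{prop: concentration on long intervals}. Because the vertices of a plane forest are listed consecutively tree-by-tree in DFS order, and $\mathbf{C}$ differs from this DFS listing only by the cyclic shift that places $V$ first, every tree $T\in\mathbb{F}(\mathbf{s})$ corresponds to a cyclic arc in $\mathbf{C}$ of length $|T|$ along which the sum of $C_i^2$ equals $\sigma^2(T)$.

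Next I cover $\mathbb{Z}/n\mathbb{Z}$ by overlapping blocks of length approximately $2\alpha n$ with step approximately $\alpha n$; there are $\lceil 1/\alpha\rceil\le 2/\alpha$ such blocks, and every cyclic arc of length at most $\alpha n$ is contained in at least one block. Consequently, on the event in~\eqref{eqn:concentration_large_tree}, some block $B$ satisfies $\sum_{i\in B}C_i^2\ge\sigma^2(T)\ge\lambda\alpha\sigma^2(\mathbf{s})$. By cyclic symmetry of the uniform permutation $\mathbf{C}$, this sum is distributed as $S_{2\alpha n}$ in the notation of Proposition~\ref{prop: concentration on long intervals}, and writing the threshold as $\lambda'(2\alpha n/n)\sigma^2(\mathbf{s})$ identifies the effective parameter $\lambda'=\lambda/2\ge 2$ (using the hypothesis $\lambda\ge 4$). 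Proposition~\ref{prop: concentration on long intervals} then gives the per-block bound $\exp\!\bigl(-\tfrac{3M\lambda\alpha n}{16\Delta^2}\bigr)\le\exp(-3M\lambda/16)$, where the last inequality uses $\alpha n>\Delta^2$. A union bound over the at most $2/\alpha$ blocks yields the claim.

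The one design choice that requires care — and the sole source of the constants in the statement — is using blocks of length $2\alpha n$ rather than $\alpha n$. A non-overlapping partition into blocks of length $\alpha n$ would split many bad trees between two consecutive blocks, forcing per-block comparisons to only $\tfrac{1}{2}\lambda\alpha\sigma^2(\mathbf{s})$ and halving the exponent in Proposition~\ref{prop: concentration on long intervals}; the overlapping cover eliminates the split at the price of roughly doubling the number of blocks. This explains both the hypothesis $\lambda\ge 4$ (so that $\lambda/2$ meets the threshold $\ge 2$ of Proposition~\ref{prop: concentration on long intervals}) and the prefactor $2/\alpha$ in the conclusion. I anticipate no further substantive obstacle beyond this bookkeeping.
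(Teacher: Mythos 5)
Your proposal is correct and follows essentially the same route as the paper: the marked-forest/cyclic-shift reduction via Corollary \ref{cor:marked forest and child sequence}, a cover by roughly $1/\alpha$ overlapping cyclic blocks of length $2\alpha n$, cyclic exchangeability to reduce each block to $S_{\lfloor 2\alpha n\rfloor}$, and Proposition \ref{prop: concentration on long intervals} with effective parameter $\lambda/2\ge 2$ together with $\alpha n>\Delta^2(\mathbf{s})$. Your accounting of where the hypotheses $\lambda\ge 4$ and $\alpha>\Delta^2(\mathbf{s})/n$ and the prefactor $2/\alpha$ enter matches the paper's argument exactly.
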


\begin{proof}
Let $V$ be a uniformly random vertex of $\mathbb{F}(\mathbf{s})$, then $(\mathbb{F}(\mathbf{s}), V)$ is uniformly distributed in $\mathrm{MF}(\mathbf{s})$. List the nodes of $\mathbb{F}(\mathbf{s})$ in cyclic lexicographic order as $V=V_1, V_2, \cdots, V_n$, and for $i\le n$ let $C_i$ be the degree of $V_i$. By Corollary \ref{cor:marked forest and child sequence}, the sequence $(C_1, \cdots, C_n)=g(\mathbb{F}(\mathbf{s}), V)$ is uniformly distributed in $\mathrm{D}(\mathbf{s})$; in other words, it is distributed as a uniformly random permutation of $d(\mathbf{s})$. In what follows we omit some floor notations for readability. For $0\le j\le \lfloor\frac{1}{\alpha}\rfloor$, let $B_j$ be the event that \[\sum\limits_{i=j\alpha n+1}^{(j+2)\alpha n} C^2_{i~ (\mathrm{mod}~n)}\ge \lambda\alpha\sigma^2(\mathbf{s}).\] Since $C_1, \cdots, C_n$ is distributed as a uniformly random permutation of $d(\mathbf{s})$, we clearly have \[\p{B_0}=\p{B_1}=\cdots=\p{B_{\lfloor\frac{1}{\alpha}\rfloor}}.\]  
Suppose that a given tree $T\in\mathbb{F}(\mathbf{s})$ has $|T|\le \alpha n$ and $\sigma^2(T)\ge \lambda\alpha\sigma^2(\mathbf{s})$. Then there exist $0\le l< n$ and $m\le \alpha n$ such that $V(T)=\{V_{l+t~(\mathrm{mod}~n)}: 1\le t\le m\}$. Hence there exists $0\le j\le \lfloor \frac{1}{\alpha}\rfloor$ such that $V(T)\subset \{V_{i~(\mathrm{mod}~n)},~ j\alpha n+1\le i\le (j+2)\alpha n\}$. This implies that \[\sum\limits_{i=j\alpha n+1}^{(j+2)\alpha n} C^2_{i~(\mathrm{mod}~n)}\ge \sigma^2(T)\ge \lambda\alpha\sigma^2(\mathbf{s}),\] i.e. $B_j$ is true. Hence the probability in question is at most  \begin{eqnarray*}
(1+\lfloor\frac{1}{\alpha}\rfloor)\p{B_0}\le\frac{2}{\alpha}\p{S_{\lfloor2\alpha n\rfloor}\ge \lambda\alpha \sigma^2(\mathbf{s})} &\le& \frac{2}{\alpha}\p{S_{\lfloor2\alpha n\rfloor}\ge\frac{\lambda}{2}\cdot\frac{\lfloor2\alpha n\rfloor}{n}\sigma^2(\mathbf{s})}\\
&\le& \frac{2}{\alpha}\exp\left(-\frac{3M}{16}\lambda\right),
\end{eqnarray*}
where we take $k=\lfloor 2\alpha n\rfloor$ in Proposition \ref{prop: concentration on long intervals} and use $\alpha>\Delta^2(\textbf{s})/n$ at the last step.
\end{proof}

Now we finish this section by proving a key proposition on probability bound of $\mathbb{F}(\mathbf{s})$ containing trees with unusually large height. 

\begin{prop}\label{prop:key_prop}
$\forall~ \epsilon, \rho\in (0,1), \exists n_0=n_0(\epsilon)\in\mathbb{N}$ and $\beta_0>0$ such that the following is true. Let $\mathbf{s}$ be any degree sequence with $|\mathbf{s}|=n\ge n_0$. Suppose that $\Delta(\mathbf{s})\le n^{\frac{1-\epsilon}{2}}, s^{(1)}\le (1-\epsilon)|\mathbf{s}|$ and $\epsilon\le\sigma^2(\mathbf{s})/n\le 1/\epsilon$, then for any $0<\beta<\beta_0$, \[\p{\exists T\in\mathbb{F}(\textbf{s}): |T|<\beta n, h(T)>\beta^{1/8} n^{1/2}}\le \rho.\]
\end{prop}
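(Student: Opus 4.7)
The plan is to combine three inputs: the Addario--Berry height tail bound (Theorem~\ref{thm:louigi_tree_height_tail_bound}), the tree-variance concentration of Proposition~\ref{prop:tree second momonet concentration alpha level}, and the degree-proportion control of Proposition~\ref{prop: bad_characterization}. A preliminary reduction notes that $h(T)\le|T|-1$, so the event is vacuous whenever $\beta n\le\beta^{1/8}n^{1/2}$, i.e., $\beta\le n^{-4/7}$; hence we may assume $\beta\ge n^{-4/7}$, in which case every candidate tree has $|T|>\beta^{1/8}n^{1/2}\ge n^{3/7}>n^{1/4}$ for $n$ sufficiently large.

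Outside an event of probability at most $\rho/3$, Proposition~\ref{prop: bad_characterization} (applied with $i=1$ at tolerance $\epsilon/2$) guarantees that every tree $T$ with $|T|>n^{1/4}$ satisfies $s_T^{(1)}\le(1-\epsilon/2)|T|$, whence $1_{\mathbf s_T}=(|T|-2)/(|T|-1-s_T^{(1)})$ is bounded by a constant depending only on $\epsilon$. For variance control, I would use a dyadic decomposition of tree sizes in $[\beta^{1/8}n^{1/2},\beta n]$: index scales $k=0,1,\dots$ from the coarsest, with scale $k$ corresponding to $|T|\in(\beta n/2^{k+1},\beta n/2^k]$. Apply Proposition~\ref{prop:tree second momonet concentration alpha level} at each scale with $\alpha_k=2\beta/2^k$ and $\lambda_k=(16/(3M))(2k\log 2+\log(6/(\rho\beta)))+4$, chosen so that the failure probability at scale $k$ is at most $\rho/(6\cdot 2^k)$. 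Outside an event of total probability $\rho/3$, every tree at scale $k$ satisfies $\sigma^2(T)\le 2\beta Mn\lambda_k/2^k$.

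On the joint good event, each tree is conditionally uniform among plane trees with its degree sequence, so Theorem~\ref{thm:louigi_tree_height_tail_bound} yields, for trees at scale $k$,
\[\mathbb P\bigl(h(T)\ge\beta^{1/8}n^{1/2}\bigm|\mathbf s_T\bigr)\le 7\exp\!\left(-\frac{c(\epsilon)\,2^k}{\beta^{3/4}\,(k+\log(1/(\rho\beta)))}\right).\]
A union bound over the at most $2^{k+1}/\beta$ trees at scale $k$, followed by summing over $k$, gives a geometric-like series dominated by the $k=0$ term, and hence a total at most $(c_1/\beta)\exp(-c_2(\epsilon)/(\beta^{3/4}\log(1/\beta)))$ for constants $c_1, c_2$. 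Since $\beta^{3/4}\log^2(1/\beta)\to 0$ as $\beta\to 0$, this is at most $\rho/3$ for all $\beta<\beta_0(\epsilon,\rho)$, uniformly in $n$.

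The main technical obstacle is handling the constraint $\alpha>\Delta^2(\mathbf s)/n$ required by Proposition~\ref{prop:tree second momonet concentration alpha level}: when $\epsilon<1/2$, the allowed dyadic range of $\alpha_k$ stops at $\alpha\asymp n^{-\epsilon}$, so one must treat the finer scales (trees of size $\in[\beta^{1/8}n^{1/2},\,n^{1-\epsilon}]$, when non-empty) separately. There, the deterministic bound $\sigma^2(T)\le|T|\Delta^2(\mathbf s)\le|T|n^{1-\epsilon}$ combined with Addario--Berry gives a per-tree tail decaying as $\exp(-c(\epsilon)\beta^{1/4}n^\epsilon/\log(1/\beta))$, which is super-polynomially small in $n$ and easily absorbs the polynomial union-bound prefactor $n^{1/2}\beta^{-1/8}$. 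The dyadic scheme with exponentially decreasing scale-wise failure budgets is the crucial ingredient that keeps $\lambda_k$ free of any $\log n$ dependence, which is what lets $\beta_0$ be chosen independently of $n$.
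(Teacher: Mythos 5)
Your coarse-scale (dyadic) analysis is essentially sound and is a legitimate variant of the paper's argument: where the paper controls trees of size in $(\beta n/2^{k+1},\beta n/2^{k}]$ by taking the variance threshold $(|T|/n)^{1/2}\sigma^2(\mathbf{s})$ (i.e.\ $\lambda\asymp (2^{k}/\beta)^{1/2}$ in Proposition~\ref{prop:tree second momonet concentration alpha level}), you take $\lambda_k\asymp k+\log(1/(\rho\beta))$ with per-scale failure budgets $\rho/(6\cdot 2^{k})$; both choices keep $\lambda$ free of $n$ and, after Theorem~\ref{thm:louigi_tree_height_tail_bound} and a union over the at most $2^{k+1}/\beta$ trees per scale, yield a bound that is small for all $\beta<\beta_0(\epsilon,\rho)$ uniformly in $n$. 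The use of Proposition~\ref{prop: bad_characterization} with $i=1$ to control $1_{\mathbf{s}_T}$ matches the paper's event $B$.

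The genuine gap is in your treatment of the fine scales, $|T|\le n^{1-\epsilon}$ (where the constraint $\alpha>\Delta^2(\mathbf{s})/n$ blocks Proposition~\ref{prop:tree second momonet concentration alpha level}). You invoke the deterministic bound $\sigma^2(T)\le |T|\Delta^2(\mathbf{s})\le |T|n^{1-\epsilon}$ and claim a per-tree tail $\exp\left(-c(\epsilon)\beta^{1/4}n^{\epsilon}/\log(1/\beta)\right)$, "super-polynomially small in $n$". This does not follow: with $m=\beta^{1/8}n^{1/2}$, Theorem~\ref{thm:louigi_tree_height_tail_bound} gives an exponent of order $\beta^{1/4}n/\left(\sigma^2(T)1_{\mathbf{s}_T}^2\right)\ge c(\epsilon)\beta^{1/4}n^{\epsilon}/|T|$, and $|T|$ ranges up to $n^{1-\epsilon}$, so for $\epsilon<1/2$ the exponent can be as small as $c(\epsilon)\beta^{1/4}n^{2\epsilon-1}$, which tends to $0$ as $n\to\infty$: the per-tree tail has no decay at all for trees of size near $n^{1-\epsilon}$, and the polynomial union-bound prefactor certainly cannot be absorbed. (The factor $\log(1/\beta)$ in your claimed rate appears to have been carried over from the coarse-scale $\lambda_k$; it has no source in the deterministic bound.) What is needed here is a probabilistic, not deterministic, variance bound for small trees, and this is exactly what the paper's event $E_2$ provides: applying Proposition~\ref{prop: concentration on long intervals} directly, together with the marked-forest/cyclic-shift union argument from the proof of Proposition~\ref{prop:tree second momonet concentration alpha level}, one gets that outside an event of probability at most $n^{2-\epsilon}\exp\left(-\tfrac{3}{16}n^{\epsilon/2}\right)$, every tree with $|T|\le n^{1-\epsilon}$ satisfies $\sigma^2(T)\le n^{1-\epsilon/2}$; on that event Theorem~\ref{thm:louigi_tree_height_tail_bound} gives a per-tree tail $7\exp\left(-c\epsilon^2\beta^{1/4}n^{\epsilon/2}\right)$, which does beat the union over at most $n$ trees once $n$ is large (as in the paper's bound on $E_4$). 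Without this ingredient your fine-scale estimate fails, and it is precisely the step your proposal is missing.
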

\begin{proof}
Fix $\beta>0$ small, let $\delta=\beta^{1/8}$, and consider the following four events.
\begin{itemize}
\item $E_1$ is the event that there exists a tree $T$ (of $\mathbb{F}(\textbf{s})$) with $\Delta^2(\textbf{s})<|T|<\beta n$ and $\sigma^2(T)>(\frac{|T|}{n})^{1/2}\sigma^2(\textbf{s})$. 
\item $E_2$ is the event that there exists a tree $T$ with $|T|\le n^{1-\epsilon}$ and $\sigma^2(T)>n^{1-\frac{\epsilon}{2}}$. 
\item $E_3$ is the event that there exists a tree $T$ with $\Delta^2(\textbf{s})<|T|<\beta n$ and $\sigma^2(T)\le(\frac{|T|}{n})^{1/2}\sigma^2(\textbf{s})$ such that $h(T)>\delta n^{1/2}$.
\item $E_4$ is the event that there exists a tree $T$ with $|T|\le n^{1-\epsilon}$ and $\sigma^2(T)\le n^{1-\frac{\epsilon}{2}}$ such that $h(T)>\delta n^{1/2}$. 
\end{itemize}
If there is $T\in\mathbb{F}(\textbf{s})$ with $|T|<\beta n$, and $h(T)>\delta n^{1/2}$, then one of $E_1, E_2, E_3$ or $E_4$ must occur, so it suffices to bound $\p{E_1}+\p{E_2}+\p{E_3}+\p{E_4}$.
For $E_1$, we further decompose the interval $[\Delta^2(\mathbf{s}), \beta n]$ dyadically. In the next sum, we bound the $k-$th summand by taking $\alpha=\frac{\beta}{2^k}, \lambda=\frac{2^{\frac{k-1}{2}}}{\beta^{1/2}}\ge 4$ in Proposition \ref{prop:tree second momonet concentration alpha level}.
\begin{eqnarray}\label{eqn:p_E_1}
\p{E_1}&\le&\sum\limits_{k=0}^{\lfloor\log_2 \frac{\beta n}{\Delta^2(\mathbf{s})}\rfloor}\p{\exists T\in\mathbb{F}(\textbf{s}): |T|\in \left[\frac{\beta n}{2^{k+1}}, \frac{\beta n}{2^k}\right], \sigma^2(T)>\left(\frac{\beta}{2^{k+1}}\right)^{1/2}\sigma^2(\textbf{s})} \nonumber \\ 
&\le& \sum\limits_{k\ge 0}\frac{2^{k+1}}{\beta}\exp\left(-\frac{3\sigma^2(\mathbf{s})}{16n}\frac{2^{\frac{k-1}{2}}}{\beta^{1/2}}\right)\nonumber\\ 
&=& O\left(\frac{1}{\beta}\exp(-\frac{\epsilon}{\beta^{1/2}})\right)
\end{eqnarray} 
where we use that $\sigma^2(\mathbf{s})/n\ge\epsilon$ in the final line. 

Next, note that 
$\p{E_2}\le\sum\limits_{j=1}^{n^{1-\epsilon}}\p{\exists T\in\mathbb{F}(\mathbf{s}): |T|=j, \sigma^2(T)>n^{1-\epsilon/2}}$. 
For any fixed $j$, using Corollary \ref{cor:marked forest and child sequence}, with similar argument as in proof of Proposition \ref{prop:tree second momonet concentration alpha level}, we have 
\[\p{\exists T\in\mathbb{F}(\mathbf{s}): |T|=j, \sigma^2(T)>n^{1-\epsilon/2}}\le n\p{S_j\ge n^{1-\epsilon/2}}.\]
For any $j\le n^{1-\epsilon}$, use Proposition \ref{prop: concentration on long intervals} with $\lambda \frac{j}{n}\sigma^2(\mathbf{s})=n^{1-\epsilon/2}$ and $\Delta(\mathbf{s})\le n^{\frac{1-\epsilon}{2}}$, we have \[\p{S_j\ge n^{1-\epsilon/2}}\le \exp\left(-\frac{3\sigma^2(\mathbf{s})}{16n}\cdot \frac{\lambda j}{\Delta(\mathbf{s})^2}\right)\le \exp\left(-\frac{3}{16}n^{\epsilon/2}\right).\]
These give that 
\begin{equation}\label{eqn:p_E_2}
\p{E_2}\le n^{2-\epsilon}\exp\left(-\frac{3}{16}n^{\epsilon/2}\right).
\end{equation}

We bound $\p{E_3}$ as follows. For $k\ge 0$, let $E_{3,k}$ be the event that there exists $T\in\mathbb{F}(\mathbf{s})$  with $\frac{\beta n}{2^{k+1}}\le |T|\le \frac{\beta n}{2^k}$ and $\sigma^2(T)\le (\frac{|T|}{n})^{1/2}\sigma^2(\textbf{s})$ such that height $h(T)>\delta n^{1/2}$. Also, let $B$ be the event that there exists $T\in\mathbb{F}(\mathbf{s})$ with $|T|\ge n^{1/4}$ such that \[\left|\frac{s^{(1)}(T)}{|T|}-\frac{s^{(1)}}{n}\right|\ge\epsilon/2.\] For $n$ large enough, we have $\frac{\sqrt{5}}{\log n}<\epsilon/2<1$. Hence it is immediate from Lemma \ref{lem: bad event small} and Proposition \ref{prop: bad_characterization} that $\p{B}\le n^{-2}$ for $n$ large. Also, for $n$ large, if $h(T)\ge \delta n^{1/2}$ then $|T|\ge h(T)\ge n^{1/4}$, so
\begin{equation}\label{eqn:E_3_1}
\p{E_3}\le \p{B}+\sum\limits^{\lfloor\log_2 \frac{\beta n}{\Delta^2(\mathbf{s})}\rfloor}_{k=0}\p{E_{3,k}\cap B^c}\le \frac{1}{n^2}+\sum\limits^{\lfloor\log_2 \frac{\beta n}{\Delta^2(\mathbf{s})}\rfloor}_{k=0}\p{E_{3,k}\cap B^c}.
\end{equation}
Let $M$ be the number of trees $T\in\mathbb{F}(\mathbf{s})$
with $\frac{\beta n}{2^{k+1}}\le |T|\le \frac{\beta n}{2^k}$ and $\sigma^2(T)\le (\frac{|T|}{n})^{1/2}\sigma^2(\textbf{s})$, and list the {\em random} degree sequences of these trees as $\mathbf{R}_1, \cdots, \mathbf{R}_m$. Then for any degree sequences $\mathbf{r}_1, \cdots, \mathbf{r}_m$,
\begin{multline*}
\p{E_{3,k}\cap B^c\cap \{(\mathbf{R}_1,\cdots, \mathbf{R}_m)=(\mathbf{r}_1, \cdots, \mathbf{r}_m)\}}=\p{B^c\cap \{(\mathbf{R}_1,\cdots, \mathbf{R}_m)=(\mathbf{r}_1, \cdots, \mathbf{r}_m)\}}\\
\cdot\p{E_{3,k}~|~B^c\cap \{(\mathbf{R}_1,\cdots, \mathbf{R}_m)=(\mathbf{r}_1, \cdots, \mathbf{r}_m)\}}.
\end{multline*}
Moreover \[\p{E_{3,k}~|~B^c\cap \{(\mathbf{R}_1,\cdots, \mathbf{R}_m)=(\mathbf{r}_1, \cdots, \mathbf{r}_m)\}}=\p{\exists i\le m, h(\mathbb{T}(\mathbf{r}_i))\ge \delta n^{1/2}},\] where $\mathbb{T}(\mathbf{r}_i)$ is a uniformly random plane tree with degree sequence $\mathbf{r}_i$. It follows from these identities that
\begin{equation}\label{eqn:E_3_star}
\p{E_{3,k}\cap B^c}\le \sup \p{\exists i\le m, h(\mathbb{T}(\mathbf{r}_i))\ge \delta n^{1/2}},
\end{equation}
where the supremum is over vectors $(\mathbf{r}_1,\cdots,\mathbf{r}_m)$ of degree sequences such that \[\p{E_{3,k}\cap B^c \cap \{(\mathbf{R}_1,\cdots, \mathbf{R}_m)=(\mathbf{r}_1, \cdots, \mathbf{r}_m)\}}>0.\] The last condition implies that, for all $i\le m$, \[\left|\frac{\mathbf{r}^{(1)}_i}{n(\mathbf{r}_i)}-\frac{s^{(1)}}{n}\right|<\epsilon/2, \mbox{ so } \frac{\mathbf{r}^{(1)}_i}{n(\mathbf{r}_i)}<1-\epsilon/2,\] and that \[\sigma^2(\mathbf{r}_i)\le (\frac{n(\mathbf{r}_i)}{n})^{1/2}\sigma^2(\mathbf{s})\le (\frac{\beta}{2^k})^{1/2}\sigma^2(\mathbf{s}).\] Finally we must have $n(\mathbf{r}_i)\ge \frac{\beta}{2^{k+1}}n$ for all $i\le M$, so $M\le \frac{2^{k+1}}{\beta}$. Now recall Theorem \ref{thm:louigi_tree_height_tail_bound}, which states that for a degree sequence $\mathbf{r}=(r^{(i)}, i\ge 0)$ and for all $h\ge 1$, 
\[\p{h(\mathbb{T}(\mathbf{r}))\ge h}\le 7\exp\left(-h^2/608\sigma^2(\mathbf{r})1_{\mathbf{r}}^2\right)\]
where $1_{\mathbf{r}}=\frac{|\mathbf{r}|-2}{|\mathbf{r}|-1-r^{(1)}}$; note that this is at most $4/\epsilon$ for all degree sequences under consideration (for $n$ large enough such that $n^{1/4}\ge 4/\epsilon$).  
Using a union bound in (\ref{eqn:E_3_star}), and then applying Theorem \ref{thm:louigi_tree_height_tail_bound}, we obtain that
\[\p{E_{3,k}\cap B^c}\le \frac{2^{k+1}}{\beta}\cdot 7\exp\left(-\frac{\epsilon^3\delta^2}{9728}(\frac{2^k}{\beta})^{1/2}\right)\] where we use the assumption $\sigma^2(\mathbf{s})/n\le 1/\epsilon$. And summing over $k$ in (\ref{eqn:E_3_1}) yields that 
\begin{equation}\label{eqn:p_E_3}
\p{E_3}\le \sum\limits_{k\ge 0} \frac{2^{k+1}}{\beta}\cdot 7\exp\left(-\frac{\epsilon^3\delta^2}{9728}(\frac{2^k}{\beta})^{1/2}\right)+\frac{1}{n^2}\le C_5\frac{1}{\beta}\exp\left(-\frac{C_6}{\beta^{1/4}}\right)+\frac{1}{n^2}
\end{equation}
if we take $\delta=\beta^{1/8}$, where $C_5>0$ is some universal constant and $C_6>0$ is some constant depending on $\epsilon$.

For $\p{E_4}$, similar to the previous treatment of $\p{E_3}$, for $n$ large, we have \[\p{E_4}\le \frac{1}{n^2}+\p{E_4\cap B^c}.\] 
There are at most $n$ trees in total, so a reprise of the conditioning argument used to bound $\p{E_3}$ gives
\[\p{E_4\cap B^c}\le n\sup \p{h(\mathbb{T}(\mathbf{r}))\ge \delta n^{1/2}},\]
where the supremum is over degree sequences $\mathbf{r}$ with $n(\mathbf{r})\le n^{1-\epsilon}$, with $\sigma^2(\mathbf{r})\le n^{1-\epsilon/2}$, and with $r^{(1)}\le (1-\epsilon/2)n(\mathbf{r})$.
By Theorem \ref{thm:louigi_tree_height_tail_bound}, we obtain that
\begin{eqnarray}\label{eqn:p_E_4}
\p{E_4}\le\frac{1}{n^2}+7n\exp\left(-\frac{\delta^2 n}{608 \sigma^2(\mathbf{r})1^2_{\mathbf{r}}}\right) &\le& \frac{1}{n^2}+7n\exp\left(-\frac{\delta^2 n}{608n^{1-\frac{\epsilon}{2}}\frac{16}{\epsilon^2}}\right)\nonumber\\&=&\frac{1}{n^2}+7n\exp\left(-\frac{\epsilon^2}{9728}n^{\epsilon/2}\beta^{1/4}\right);
\end{eqnarray}
recall that we take $\delta=\beta^{1/8}$.
Of the bounds on $\p{E_i}, 1\le i\le 4$ in (\ref{eqn:p_E_1}), (\ref{eqn:p_E_2}), (\ref{eqn:p_E_3}) and (\ref{eqn:p_E_4}), the largest is for $\p{E_3}$ (provided $n$ is large enough). Hence by taking $\beta>0$ small enough, we can make the bound less than any prescribed number $\rho>0$, which yields the result.
\end{proof}

\section{Convergence of the Lukasiewicz walk of forest to first passage bridge}\label{sec:convergence of walk}

In this section, we aim to prove Theorem \ref{thm:walk convergence} and conclude Proposition \ref{prop:length convergence} as a corollary of Theorem \ref{thm:walk convergence}. Throughout the section, we fix a sequence $(\mathbf{s}_\kappa, \kappa\in \mathbb{N})$ of degree sequences, and let $\nk, \textbf{p}_\kappa$ be as in Section \ref{sec:intro} and the function $d$ be as in Section \ref{sec:nto1}. Write $\sigma_\kappa=\sigma(\textbf{p}_\kappa), d_\kappa=d(\sk), \sigma=\sigma(\textbf{p})$. Recall from Section \ref{sec:intro} that for $l\ge 0$, we write $B^{br}_l$ for the Brownian bridge of duration 1 from 0 to $-l$. Moreover, we simply write $B^{br}$ for the case $l=0$.

\begin{prop}\label{prop:walk convergence}
Assume $(\sk, \kappa\ge 0)$ satisfies the hypothesis of Theorem \ref{thm:forest of trees}, and in particular that $\ck=c(\sk)=(1+o(1))\lambda\sigma_\kappa\nk^{1/2}$ as $\kappa\rightarrow\infty$ for some $\lambda>0$ and that $\sigma_\kappa\rightarrow\sigma$. For each $\kappa\ge 0$, fix a uniform random permutation $\pi_\kappa$ of $[\nk]$, and define a $C[0,1]$ function $\widetilde{W}_\kappa$ by $$\widetilde{W}_\kappa(t):=\frac{W_{\pi_\kappa(d_\kappa)}(t\nk)}{\sigma_\kappa{\nk}^{1/2}}.$$ Then \[\widetilde{W}_\kappa \overset{d}{\to} B^{br}_{\lambda} \mbox{ in } C[0,1].\]
\end{prop}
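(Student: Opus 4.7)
Observe that the walk $W_{\pi_\kappa(d_\kappa)}$ is the partial-sum process whose increments $\xi_i := (d_\kappa)_{\pi_\kappa(i)} - 1$ form a uniformly random permutation of the fixed multiset $\{(d_\kappa)_j - 1 : 1 \le j \le \nk\}$, placing us in a classical sampling-without-replacement set-up. I plan to prove the functional CLT by the two standard steps: joint convergence of finite-dimensional distributions to those of $B^{br}_\lambda$, followed by tightness in $C[0,1]$.

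For the finite-dimensional convergence I would apply a combinatorial CLT for linear statistics of uniform random permutations, in the spirit of Hoeffding or H\'ajek. The empirical mean of the $\xi_i$'s is $\mu(\mathbf{p}_\kappa) - 1 = -c_\kappa/\nk$, so the deterministic endpoint $\widetilde{W}_\kappa(1) = -c_\kappa/(\sigma_\kappa \nk^{1/2})$ tends to $-\lambda$ by hypothesis; the empirical variance $\sigma^2(\mathbf{p}_\kappa) - \mu(\mathbf{p}_\kappa)^2$ converges to $\sigma^2(\mathbf{p}) - 1$ using Remark \ref{rmk:degree assumption}, and the Lindeberg-type bound $\max_i |\xi_i|/\nk^{1/2} \le (\Delta_\kappa + 1)/\nk^{1/2} \to 0$ is likewise supplied by Remark \ref{rmk:degree assumption}. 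Since $\mathrm{Cov}(\xi_i, \xi_j) = -\mathrm{Var}(\xi_1)/(\nk-1)$ for $i \ne j$, the covariance structure of the partial sums is of bridge type, and applying the combinatorial CLT to suitable linear combinations $\sum_j a_j \widetilde{W}_\kappa(t_j)$ identifies the joint limit as the joint law of $(B^{br}_\lambda(t_1), \ldots, B^{br}_\lambda(t_m))$.

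For tightness in $C[0,1]$ I would establish a Kolmogorov-type fourth-moment estimate $\e{(\widetilde{W}_\kappa(t) - \widetilde{W}_\kappa(s))^4} \le C(t-s)^2$ valid whenever $t - s \ge 1/\nk$. Expanding the fourth moment of a sum of sampled-without-replacement increments produces off-diagonal terms that gain small factors from exchangeable cancellations and diagonal terms controlled by $\Delta_\kappa^2/\nk = o(1)$. An alternative route would be to couple the without-replacement walk with an i.i.d.\ walk of step law $\mathbf{p}_\kappa$: because $c_\kappa$ sits at the Gaussian scale $\sigma_\kappa \nk^{1/2}$, a local CLT at the endpoint would let one transfer both the finite-dimensional convergence and the tightness from Donsker's theorem for the unconditioned i.i.d.\ walk to the conditioned one, which agrees in law with $W_{\pi_\kappa(d_\kappa)}$ after matching the empirical distribution of increments.

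The main obstacle I anticipate is the tightness step: the hypothesis $\Delta_\kappa = o(\nk^{1/2})$ allows the increments to be unbounded, so direct fourth-moment bookkeeping for without-replacement sampling is noticeably more delicate than in the i.i.d.\ case. The cleanest resolution is likely the conditioning-through-local-CLT route, which reduces matters to Donsker's theorem at the price of establishing a local CLT for an i.i.d.\ walk whose step distribution itself varies with $\kappa$.
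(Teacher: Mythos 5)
Your fdd step is sound in outline (Hoeffding/H\'ajek's combinatorial CLT applied to linear statistics of the permutation, with mean and variance computations matching the paper's), but the tightness step contains a genuine gap. The fourth-moment bound $\e{(\widetilde{W}_\kappa(t)-\widetilde{W}_\kappa(s))^4}\le C(t-s)^2$ for all $t-s\ge 1/\nk$ is simply false under the standing hypotheses: the assumptions only give $\Delta_\kappa=o(\nk^{1/2})$ (Remark \ref{rmk:degree assumption}), and for a block of $m=(t-s)\nk$ without-replacement draws the fourth central moment is of order $m^2\sigma_\kappa^4+m\,\e{\xi_1^4}$ with $\e{\xi_1^4}$ as large as $\Delta_\kappa^2\sigma_\kappa^2$; after dividing by $\sigma_\kappa^4\nk^2$ this is $(t-s)^2+(t-s)\Delta_\kappa^2/\nk$, and the second term dominates $(t-s)^2$ whenever $t-s\ll \Delta_\kappa^2/\nk$, e.g.\ for $\Delta_\kappa=\nk^{1/2}/\log \nk$ and $t-s$ of order $1/\nk$. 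So the Kolmogorov criterion as you state it cannot close the argument; one would need a truncation of the large degrees plus a separate control of their contribution, which you do not supply. Your fallback route also does not work as sketched: conditioning an i.i.d.\ walk with step law $\mathbf{p}_\kappa$ on its \emph{endpoint} does not produce $W_{\pi_\kappa(d_\kappa)}$ --- the law of the uniformly permuted fixed child sequence is the i.i.d.\ walk conditioned on its entire empirical increment type, and a one-dimensional local CLT at the endpoint gives no absolute-continuity control over that much finer conditioning (one would need a multivariate local limit theorem over all degree counts, whose number may grow with $\kappa$).

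The paper avoids both issues by invoking Aldous' weak convergence theorem for exchangeable triangular arrays (Theorem \ref{thm:aldous_exchangeability}, i.e.\ Corollary 20.10(a) of \cite{Aldous1985}): since the increments are exchangeable, it suffices to check $\max_i|Z_{\kappa,i}|\overset{p}{\to}0$ (which is exactly $\Delta_\kappa=o(\nk^{1/2})$), $\mu_\kappa=-\ck/(\sigma_\kappa\nk^{1/2})\to-\lambda$, and $\tau_\kappa^2\to 1$, and the theorem then delivers convergence in $D[0,1]$ to $B^{br}(t)-\lambda t\eqdist B^{br}_\lambda(t)$ in one stroke --- fdd's and tightness together --- after which one passes from the step process to the interpolated process (an error of order $\Delta_\kappa/(\sigma_\kappa\nk^{1/2})$) and uses that the Skorohod topology restricted to $C[0,1]$ is the uniform topology. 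If you want to keep your two-step structure, the honest fix is to replace the fourth-moment bound by the exchangeable-increments machinery (or a truncation argument treating degrees above $\eta\nk^{1/2}$ separately); as written, the tightness claim is the missing piece.
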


To prove this theorem, we make use of the following result, which is Corollary 20.10 (a) in \cite{Aldous1985}.
\begin{thm}\label{thm:aldous_exchangeability}
Consider a triangular array $(Z_{q, i}: 1\le i\le M_q, 1\le q)$ of random variables satisfying

(a) For each $q$, the sequence $(Z_{q,1}, \cdots, Z_{q, M_q})$ is exchangeable;

(b) $\max\limits_i |Z_{q,i}|\overset{p}{\to} 0$ as $q \to\infty$.

Define $\mu_q=\sum\limits_i Z_{q,i},~ \tau_q^2=\sum\limits_i (Z_{q,i}-\frac{\mu_q}{M_q})^2$ and $S^q(t)=\sum\limits_{i=1}^{\lfloor tM_q\rfloor}Z_{q,i}$.

Let $X(t)=\tau B^{br}(t)+\mu t$ where $(\tau, \mu)$ is independent of $B^{br}$. Then
$$S^q\overset{d}{\to}X \mbox{ in } D[0,1] \mbox{ iff } (\mu_q, \tau_q)\overset{d}{\to}(\mu, \tau).$$
\end{thm}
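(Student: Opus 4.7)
The plan is to treat the two implications separately; necessity is short, while sufficiency is the substantive half. For necessity, suppose $S^q \overset{d}{\to} X$ in $D[0,1]$. Continuity of evaluation at $t=1$ immediately gives $\mu_q = S^q(1) \overset{d}{\to} X(1) = \mu$. For $\tau_q$, I would write $\tau_q^2 = \sum_i Z_{q,i}^2 - \mu_q^2/M_q$ and recognise $\sum_i Z_{q,i}^2$ as the sum of squared jumps of the piecewise-constant process $S^q$. Assumption~(b) and a.s.\ continuity of $X$ let one pass from the discrete to the continuous quadratic variation, so $\sum_i Z_{q,i}^2 \overset{d}{\to} [X,X]_1 = \tau^2$ (the drift $\mu t$ contributes nothing and $B^{br}$ has quadratic variation~$1$), yielding the joint convergence $(\mu_q, \tau_q) \overset{d}{\to} (\mu, \tau)$.

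For sufficiency the key idea is to condition on the unordered multiset $\{Z_{q,i}\}_{i=1}^{M_q}$: by exchangeability, conditionally on this multiset the sequence $(Z_{q,i})_i$ is a uniformly random ordering, so $S^q$ is the partial-sum process obtained by sampling without replacement from a finite population of size $M_q$ with mean $\mu_q/M_q$ and total centred second moment $\tau_q^2$. On this conditional level I would invoke the Hajek--Donsker invariance principle for sampling without replacement (see Billingsley, \emph{Convergence of Probability Measures}, \S~24): provided the negligibility ratio $\max_i |Z_{q,i}|/\tau_q \to 0$, the centred process $(S^q(t) - t\mu_q)_{t \in [0,1]}$ converges in $D[0,1]$ to $\tau_q\,B^{br}$. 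Combining with $(\mu_q,\tau_q) \overset{d}{\to} (\mu,\tau)$ via a Skorokhod coupling and the continuous mapping $(b,\nu,\sigma) \mapsto (\nu t + \sigma\,b(t))_{t \in [0,1]}$ then delivers $S^q \overset{d}{\to} (\mu t + \tau B^{br}(t))_{t \in [0,1]} = X$ in $D[0,1]$.

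The main obstacle is that the Hajek negligibility ratio $\max_i |Z_{q,i}|/\tau_q$ need not tend to zero under hypothesis~(b) alone, which controls only the numerator; difficulties arise when $\tau=0$ or on events where $\tau_q$ is atypically small. I would handle this by a splitting argument: on the event $\{\tau_q \ge \epsilon\}$ the ratio condition is satisfied for large $q$ and the Hajek principle applies, while on the complement the fluctuation $\sup_t |S^q(t) - t\mu_q|$ is small by a maximal inequality derived from the reverse-martingale structure of centred partial sums under the exchangeable filtration (an $L^2$ bound of order $\tau_q^2$). Sending $\epsilon \downarrow 0$ after $q \to \infty$ recovers the full statement, covering in particular the degenerate case $\tau=0$, where the limit is the deterministic drift $\mu t$ and the fluctuation vanishes outright.
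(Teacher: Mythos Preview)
The paper does not prove this theorem: it is quoted verbatim as Corollary~20.10(a) from Aldous's Saint-Flour notes \cite{Aldous1985} and used as a black box in the proof of Proposition~\ref{prop:walk convergence}. There is therefore no in-paper argument to compare your proposal against.

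That said, your outline follows the standard route one finds in Aldous's treatment. The sufficiency half is essentially right: conditioning on the order statistics reduces to a uniformly random permutation of a deterministic array, after which the finite-population invariance principle (Billingsley \S24, or Aldous's own development via reverse martingales) yields the Brownian bridge limit, with the $\epsilon$-truncation handling possible degeneracy of $\tau$. One caution on the necessity direction: your passage from $S^q \overset{d}{\to} X$ to $\sum_i Z_{q,i}^2 \overset{d}{\to} \tau^2$ via ``quadratic variation of the limit'' is the weakest link as written. Convergence in $D[0,1]$ does not by itself transport sums of squared increments to the quadratic variation of the limit; one needs an additional argument, for instance showing that $(S^q(t))^2 - \sum_{i \le \lfloor tM_q\rfloor} Z_{q,i}^2$ is a (reverse) martingale whose limit identifies $\tau^2$, or invoking tightness of the predictable quadratic variation directly. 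Aldous handles this through the martingale structure rather than by appealing to pathwise quadratic variation.
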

\begin{proof}[Proof of Proposition \ref{prop:walk convergence}]
Let $d_{\kappa, i}:=\pi_\kappa(d_\kappa)_i-1,$ for $ 1\le i\le \nk$. Although $d_{\kappa, i}$ depends on $\kappa$, we will write $d_i$ instead of $d_{\kappa, i}$ from here for readability. We apply the above theorem directly with $Z_{\kappa, i}=\frac{d_i}{\sigma_{\kappa}\nk^{1/2}}$. Condition (a) is satisfied since $\pi_\kappa$ is a uniformly random permutation of $[\nk]$. Condition $(b)$ is satisfied since $\Delta_{\kappa}=o({\nk}^{1/2})$ and $\sup\sigma_{\kappa}<\infty$.

Next note that, since $\sum\limits_i d_i=\sum\limits_i (\pi_\kappa(d_\kappa)_i-1)=-\ck$,
\begin{equation}\label{eqn:correct mean}
\mu_\kappa=\sum\limits_i Z_{\kappa,i}=\frac{\sum d_i}{\sigma_{\kappa}\nk^{1/2}}=\frac{-\ck}{\sigma_{\kappa}\nk^{1/2}}\to -\lambda \mbox{\ as\ } \kappa\to\infty,
\end{equation}
the final convergence holding by our assumption on $\ck$. We also have
\begin{align*}
\tau_\kappa^2 & =\sum\limits_i \left(\frac{d_i}{\sigma_{\kappa}\nk^{1/2}}-\frac{-\ck}{\sigma_{\kappa}\nk^{1/2}\nk}\right)^2\\
& =\frac{1}{\sigma_{\kappa}^2\nk}(\sum\limits_i d_i^2+2\frac{\ck}{\nk}\sum\limits_i d_i+\frac{\ck^2}{\nk})
 =\frac{1}{\sigma_{\kappa}^2\nk}(\sum\limits_i d_i^2-\frac{\ck^2}{\nk})\\
& =\frac{1}{\sigma_{\kappa}^2\nk}\sum\limits_i d_i^2 + o(1),
\end{align*}
the last equation holding since $\ck=O(\nk^{1/2})$.

Next note that
\begin{align*}
\sum\limits_i d_i^2 = \sum\limits_i(\pi_\kappa(d_\kappa)_i-1)^2 &=\sum\limits_i ((d_\kappa)_i)^2+\nk-2\sum_i (d_\kappa)_i\\
&=\nk(\sigma_{\kappa}^2+1)+\nk-2(\nk-\ck)\\
&=\nk\sigma_\kappa^2+2\ck.
\end{align*}
It follows that
\begin{equation}\label{eqn: right sigma}
\tau_\kappa^2 = \frac{1}{\sigma_{\kappa}^2\nk}(\nk\sigma_\kappa^2+2\ck) + o(1)\rightarrow 1
\end{equation}
as $\kappa\rightarrow\infty$ by our assumption on $\mathbf{s}_\kappa$.

Using equations (\ref{eqn:correct mean}) and (\ref{eqn: right sigma}), by Theorem \ref{thm:aldous_exchangeability} we conclude that \[\left(\frac{W_{\pi_\kappa(d_\kappa)}(\lfloor t\nk\rfloor)}{\sigma_\kappa{\nk}^{1/2}}, ~0\le t\le 1\right)\overset{d}{\to} \left(B^{br}(t)-\lambda t, ~0\le t\le 1\right) \mbox{  in } D[0, 1].\] For all $t$, \[\Big|\frac{W_{\pi_\kappa(d_\kappa)}(\lfloor t\nk\rfloor)}{\sigma_\kappa{\nk}^{1/2}}-\frac{W_{\pi_\kappa(d_\kappa)}(t\nk)}{\sigma_\kappa{\nk}^{1/2}}\Big|\le \frac{\Delta_\kappa}{\sigma_\kappa\nk^{1/2}}=o(1)\] by assumption, so we must also have $\left(\widetilde{W}_\kappa(t), 0\le t\le 1\right) \overset{d}{\to} \left(B^{br}(t)-\lambda t,~ 0\le t\le 1\right)$ in $D[0, 1]$. Since the Skorohod topology relativized to $C[0,1]$ coincides with the uniform topology (see page 124 of \cite{Billingsley1999}), the result follows.
\end{proof}

Let $f: \mathcal{C}_0(1)\times[0, \infty)\to \mathcal{C}_0(1)$ be defined by $f(b, v):=\theta_u(b)$ where $u=\inf\{t: b(t)\le\min\limits_{0\le s\le 1} b(s)+v\}$. Note that since $b$ is continuous, the minimum of $b$ exists.
Also, for $v\le -\min\limits_{0\le s\le 1} b(s)$, we have $u=\inf\{t: b(t)=\min\limits_{0\le s\le 1} b(s)+v\}$ and for $v\ge -\min\limits_{0\le s\le 1} b(s)$ we have $u=0$ so $f(b,v)=\theta_0(b)=b$. 



Recall from Section \ref{sec:intro} the \emph{first passage bridge (of unit length from 0 to $-\lambda$)} $F^{br}_\lambda$ is $$(F^{br}_{\lambda}(t), 0\le t\le 1)\overset{d}{=}(B(t),0\le t\le 1 ~|~ T_{\lambda}=1)$$ where $T_{\lambda}:=\inf\{t:B(t)<-\lambda\}$ is the first passage time below level $-\lambda< 0$ and $B$ is the standard Brownian motion. We are going to use the following result from \cite{BCP2003}.

\begin{thm}[\cite{BCP2003}, Theorem 7]\label{thm: BCP 2003}
Let $\nu$ be uniformly distributed over $[0, \lambda]$ and independent of $B^{br}_{\lambda}$. Define the r.v. $U=\inf\{t: B^{br}_{\lambda}(t)=\inf_{0\le s\le 1}B^{br}_{\lambda}(s)+\nu\}$. Then the process $\theta_U(B^{br}_{\lambda})$ has the law of the first passage bridge $F^{br}_\lambda$. Moreover, $U$ is uniformly distributed over $[0, 1]$ and independent of $\theta_U(B^{br}_{\lambda})$.
\end{thm}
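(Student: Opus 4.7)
The plan is to prove this as a continuous analogue of the discrete $n$-to-$1$ map in Lemma \ref{lem:nto1}, by approximating the Brownian bridge $B^{br}_\lambda$ by suitably rescaled lattice bridges. Concretely, fix a sequence $(\mathbf{s}_n)$ of degree sequences with $n(\mathbf{s}_n)=n$, $\sigma(\mathbf{p}_n)\to 1$ and $c(\mathbf{s}_n)/\sqrt{n}\to\lambda$ (for example, degree sequences supported on $\{0,2\}$, corresponding to simple random walk bridges). By a Donsker-type invariance principle (essentially Proposition \ref{prop:walk convergence} applied in this setting), the rescaled random lattice bridge $\widetilde W_n(\cdot) := W_{\pi_n(d_n)}(n\cdot)/\sqrt n$ converges weakly to $B^{br}_\lambda$ in $C[0,1]$. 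Independently of $\pi_n$, sample $\nu_n$ uniformly from $\{0,1,\dots,c(\mathbf{s}_n)-1\}$; Lemma \ref{lem:nto1} together with Corollary \ref{cor:dfswalkofforest} shows that $f(W_{\pi_n(d_n)},\nu_n)$ is uniform on $F(\mathbf{s}_n)$, so its rescaled version converges weakly to $F^{br}_\lambda$ (which follows from Theorem \ref{thm:walk convergence}, or can be read off directly from the characterisation \eqref{eqn:first passage bridge change of measure} via a change-of-measure check on finite-dimensional marginals).

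The next step is to pass the continuous cyclic-shift identity through the weak limit. The discrete shift location is $U_n := t(\min \widetilde W_n + \nu_n/\sqrt n)/n$, and since $\nu_n/\sqrt n$ is approximately uniform on $[0,\lambda]$, I would set up a joint weak convergence $(\widetilde W_n, \nu_n/\sqrt n)\Rightarrow (B^{br}_\lambda,\nu)$ with $\nu\sim U[0,\lambda]$ independent of $B^{br}_\lambda$. Combining this with continuity properties of the first-passage functional $b\mapsto \inf\{t: b(t)\le \min b + v\}$ on paths that do not stay constant at their level sets (which holds almost surely for $B^{br}_\lambda$), the continuous mapping theorem yields $U_n\Rightarrow U=\inf\{t: B^{br}_\lambda(t)=\min B^{br}_\lambda+\nu\}$, jointly with $\theta_{U_n}(\widetilde W_n)\Rightarrow \theta_U(B^{br}_\lambda)$. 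Combined with the previous paragraph, this forces $\theta_U(B^{br}_\lambda)\stackrel{d}{=} F^{br}_\lambda$.

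For the second assertion (uniformity of $U$ on $[0,1]$ and independence from $\theta_U(B^{br}_\lambda)$), I would again read off the discrete analogue: by Lemma \ref{lem:nto1}, each element of $F(\mathbf{s}_n)$ has exactly $n$ preimages under $f$, and these preimages correspond to $n$ distinct shift locations $t(\min(b)+j)\in[n(\mathbf{s}_n)]$. Thus, conditionally on the output $f(W_{\pi_n(d_n)},\nu_n)$, the shift location $nU_n$ is uniform on $\{0,1,\dots,n-1\}$; equivalently, $(U_n, \theta_{U_n}(\widetilde W_n))$ has product-form distribution with $U_n$ uniform on the $n$-grid of $[0,1]$. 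Passing to the weak limit gives $U\sim U[0,1]$ independent of $\theta_U(B^{br}_\lambda)$.

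The main obstacle I anticipate is justifying the continuity of the hitting-time functional along the bridge path in the joint convergence of step~2. One must rule out pathological behaviour of $B^{br}_\lambda$ near the prescribed level $\min B^{br}_\lambda+\nu$: since $\nu$ is uniform on $[0,\lambda]$ and the occupation measure of $B^{br}_\lambda$ is a.s.\ non-atomic, this level is a.s.\ not a local extremum, and the first-passage time depends continuously on the path around it. Making this precise (e.g.\ via a Skorokhod coupling together with standard properties of Brownian local time) is the technical heart of the argument; once this continuity is in hand the rest is bookkeeping.
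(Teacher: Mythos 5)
This statement is not proved in the paper at all: it is quoted verbatim as Theorem 7 of \cite{BCP2003} and used as an external input, so there is no internal proof to compare against. Your discrete-approximation (cycle-lemma) strategy is in fact the natural way to prove it, and your treatment of the second assertion is sound: the exact $n$-to-$1$ counting of Lemma \ref{lem:nto1} does show that, conditionally on the output of $f$, the shift location is uniform over all $n$ possible values independently of the output, and product laws pass to product limits, so uniformity and independence of $U$ survive the limit. The continuity issue you single out as ``the technical heart'' is also genuinely needed, but it is the easier half and is essentially already carried out in the paper (Lemmas \ref{lem:boils down to shift at close place}--\ref{lem: continuity of rotation}); note the correct justification is that the set of values of local minima of $B^{br}_\lambda$ is countable and $\nu$ is an independent continuous random variable, not non-atomicity of the occupation measure per se.

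The genuine gap is in your first step, where you need that the rescaled uniform element of $F(\mathbf{s}_n)$ converges to $F^{br}_\lambda$. Your primary citation for this, Theorem \ref{thm:walk convergence}, is circular inside this paper: its proof consists precisely of applying Theorem \ref{thm: BCP 2003} (together with Corollary \ref{cor:dfswalkofforest} and Lemma \ref{lem: continuity of rotation}) to transfer Proposition \ref{prop:walk convergence} to the first-passage setting, so it cannot be used to prove Theorem \ref{thm: BCP 2003}. Your fallback --- reading the convergence off the change-of-measure characterisation \eqref{eqn:first passage bridge change of measure} by checking finite-dimensional marginals --- is the right idea and avoids the circularity, but it is not a ``read-off'': for the conditioned walk one must compute $\mathbf{P}(\text{remaining path stays above } -c_n \text{ and first hits } -c_n \text{ at time } n)$ via a ballot/cycle-lemma identity and then apply a local limit theorem to recover the density ratio $p'_{1-s}(-\lambda-B(s))/p'_1(-\lambda)$, plus a tightness argument; this is the real content of the theorem (it is essentially a Kaigh/Liggett-type invariance principle for walks conditioned on a first-passage event), and it is comparable in weight to everything else in your sketch combined. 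Until that step is carried out, the argument establishes the identity $\theta_U(B^{br}_\lambda)\overset{d}{=}F^{br}_\lambda$ only modulo an unproven convergence that is not available elsewhere in the paper.
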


\begin{rmk}
Note that \cite{BCP2003} considers first passage times above positive levels, whereas we consider first passage below negative levels. But the two cases are clearly equivalent.
\end{rmk}
As preparation we begin with showing the almost sure continuity of the map $f$. We first show that for a fixed function $b$, the closeness of the location where $b$ is cyclically shifted will guarantee the continuity of the map $f$.

\begin{lem}\label{lem:boils down to shift at close place}
For any $b\in\mathcal{C}_0(1)$, the function $g^b: [0, 1]\rightarrow \mathcal{C}_0(1)$ with $g^b(u)=\theta_u(b)$ is uniformly continuous.
\end{lem}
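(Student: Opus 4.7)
The plan is to reduce the cyclic shift to a genuine translation by passing to a periodic extension of $b$, and then to invoke uniform continuity of a continuous function on a compact interval.

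More precisely, I would first define $\tilde{b}:[0,2]\to\mathbb{R}$ by
\[
\tilde{b}(s)=\begin{cases} b(s), & 0\le s\le 1,\\ b(s-1)+b(1), & 1\le s\le 2.\end{cases}
\]
The two cases agree at $s=1$ (both equal $b(1)$, using $b(0)=0$), so $\tilde{b}$ is continuous on the compact interval $[0,2]$. A direct check against the piecewise definition of $\theta_u$ shows that for every $u\in[0,1]$ and every $t\in[0,1]$,
\[
\theta_u(b)(t)=\tilde{b}(t+u)-\tilde{b}(u).
\]
So the cyclic shift is now expressed as an honest translation inside the enlarged domain $[0,2]$.

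Next I would apply uniform continuity. Since $\tilde{b}$ is continuous on $[0,2]$, given $\epsilon>0$ there exists $\delta>0$ such that $|s_1-s_2|<\delta$ and $s_1,s_2\in[0,2]$ imply $|\tilde{b}(s_1)-\tilde{b}(s_2)|<\epsilon/2$. Then for any $u_1,u_2\in[0,1]$ with $|u_1-u_2|<\delta$ and any $t\in[0,1]$, the triangle inequality gives
\[
\bigl|\theta_{u_1}(b)(t)-\theta_{u_2}(b)(t)\bigr|
\le \bigl|\tilde{b}(t+u_1)-\tilde{b}(t+u_2)\bigr|+\bigl|\tilde{b}(u_1)-\tilde{b}(u_2)\bigr|
<\epsilon,
\]
because $t+u_1,t+u_2\in[0,2]$ and $|(t+u_1)-(t+u_2)|=|u_1-u_2|<\delta$. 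Taking the supremum over $t\in[0,1]$ yields $\|g^b(u_1)-g^b(u_2)\|_\infty<\epsilon$, which is exactly uniform continuity of $g^b$.

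There is no real obstacle here; the only thing to be careful about is checking that the periodic extension $\tilde{b}$ is actually continuous at the seam $s=1$ (which uses $b(0)=0$) and that the case split in the definition of $\theta_u$ matches $\tilde{b}(t+u)-\tilde{b}(u)$ on each of the two pieces $\{t+u\le 1\}$ and $\{t+u\ge 1\}$. Once these bookkeeping checks are done, the lemma follows immediately from the Heine--Cantor theorem applied to $\tilde{b}$ on $[0,2]$.
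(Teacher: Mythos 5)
Your proof is correct: the identity $\theta_u(b)(t)=\tilde{b}(t+u)-\tilde{b}(u)$ does hold on both pieces $\{t+u\le 1\}$ and $\{t+u\ge 1\}$, the extension $\tilde{b}$ is continuous at the seam precisely because $b(0)=0$, and the two-term triangle inequality then gives uniform continuity of $g^b$ from Heine--Cantor applied to $\tilde{b}$ on $[0,2]$. The paper argues differently in one respect: it first invokes the semigroup property $\theta_u\circ\theta_v=\theta_{u+v \bmod 1}$ to reduce, ``without loss of generality,'' to bounding $\|\theta_u(b)-b\|$ for small $u$, and then performs a two-case estimate (according to whether $t+u$ wraps past $1$) directly in terms of the modulus of continuity of $b$, getting the bound $2\epsilon(u)$. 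Your unrolling device replaces that reduction: you compare $\theta_{u_1}(b)$ and $\theta_{u_2}(b)$ directly, with the wrap-around case split absorbed once and for all into the verification of the identity for $\tilde{b}$. What this buys you is that you never need the WLOG step, which, done pedantically, requires observing that the modulus of continuity of $\theta_v(b)$ is controlled by that of $b$ uniformly in $v$ (a point the paper passes over silently); what the paper's route buys is that it works with $b$ itself and needs no auxiliary construction. At bottom both proofs are the same estimate --- two modulus-of-continuity terms via the triangle inequality --- so either is a perfectly good proof of the lemma.
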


\begin{proof}
We want to show that $\|\theta_u-\theta_v\|$ is small when $|u-v|$ is small. Since $\theta_u\circ\theta_v=\theta_{u+v \mod 1}$, without loss of generality, we can assume that $v=0$. In other words we just aim to bound $\|\theta_u(b)-b\|$ for small $u$. Fix $\delta\in(0, 1/2)$ and let $\epsilon=\epsilon(\delta)=\sup\limits_{|t-s|<\delta}|b(t)-b(s)|$ be the modulus of continuity of $b$. Let $0<u<\delta$. If $t\in [0, 1-u]$, then $|\theta_u(b)(t)-b(t)|=|b(t+u)-b(u)-b(t)|\le |b(u)-b(0)|+|b(t+u)-b(t)|\le 2\epsilon(u)$. If $t\in [1-u, 1]$, then $|\theta_u(b)(t)-b(t)|=|b(t+u-1)+b(1)-b(u)-b(t)|\le |b(t+u-1)-b(u)|+|b(1)-b(t)|\le 2\epsilon(u)$. Since $\epsilon(u)\rightarrow 0$ as $u\rightarrow 0$, the result follows.
\end{proof}

\begin{lem}\label{lem: local min}
Given $b\in\mathcal{C}_0(1)$ and $0\le v\le -\min(b)$, if $f(b, v)=\theta_{t_{v+\min(b)}}(b)$ is not continuous at $v$, then $b$ attains a local minimum at $t_{v+\min(b)}$.
\end{lem}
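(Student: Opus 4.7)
The plan is to reduce everything to a one-sided monotonicity/continuity analysis of the ``first-hitting'' function $u(v):=\inf\{t\in[0,1]:b(t)\le\min(b)+v\}=t_{v+\min(b)}(b)$, and then argue that any discontinuity of $u$ forces a local minimum of $b$ at $u(v)$. Note that since $b$ is continuous we have $b(u(v))=\min(b)+v$ and $b(t)>\min(b)+v$ for $t<u(v)$. Moreover, enlarging $v$ only enlarges the sub-level set $\{t:b(t)\le\min(b)+v\}$, so $u$ is non-increasing on $[0,-\min(b)]$.

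The first main step will be to show $u$ is right-continuous on $[0,-\min(b))$. Fix such a $v$ and $\epsilon>0$. By compactness of $[0,u(v)-\epsilon]$ together with $b>\min(b)+v$ on this set, the quantity $m:=\min_{t\in[0,u(v)-\epsilon]}b(t)$ satisfies $m>\min(b)+v$. Hence for every $v'\in(v,m-\min(b))$ (a non-empty interval), $\min(b)+v'<m$ so $b(t)>\min(b)+v'$ on $[0,u(v)-\epsilon]$, which forces $u(v')\ge u(v)-\epsilon$. Combined with monotonicity, this yields $u(v+)=u(v)$.

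Next I will invoke Lemma \ref{lem:boils down to shift at close place}, which says $w\mapsto\theta_w(b)$ is (uniformly) continuous. Since $f(b,\cdot)=\theta_{u(\cdot)}(b)$, any discontinuity of $f(b,\cdot)$ at $v$ must come from a discontinuity of $u$ at $v$; by monotonicity and right-continuity the only possibility is a strict left-jump $u(v-)>u(v)$ (the case $v=-\min(b)$, where only the left limit is relevant, being handled identically, with $u(v)=0$).

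Finally I will convert this jump into a local minimum. For any $t\in(u(v),u(v-))$ and any $v'<v$ sufficiently close to $v$, monotonicity and the definition of $u(v-)$ give $u(v')>t$, whence $b(t)>\min(b)+v'$; letting $v'\uparrow v$ yields $b(t)\ge\min(b)+v=b(u(v))$. Combined with the strict inequality $b(t)>b(u(v))$ for $t<u(v)$, we conclude $b\ge b(u(v))$ on the neighborhood of $u(v)$ given by $[0,u(v-))$, so $b$ attains a local minimum at $u(v)=t_{v+\min(b)}$. The only subtle step is the right-continuity of $u$, which needs the compactness/attainment argument above; everything else is straightforward from monotonicity and continuity of $b$.
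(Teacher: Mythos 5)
Your proposal is correct and follows essentially the same route as the paper's own proof: reduce, via Lemma \ref{lem:boils down to shift at close place}, a discontinuity of $v\mapsto f(b,v)$ to a discontinuity of the hitting time $u(v)=t_{v+\min(b)}$, observe that $u$ is non-increasing and right-continuous so the discontinuity must be a left jump $u(v-)>u(v)$, and then use $b\ge v+\min(b)$ on $[u(v),u(v-))$ together with $b>v+\min(b)$ to the left of $u(v)$ to get the local minimum. The only difference is that you supply the compactness argument for right-continuity of $u$, which the paper asserts as clear.
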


\begin{proof}
By Lemma \ref{lem:boils down to shift at close place}, if $f(b,v)$ is not continuous at $v$, then $t_{v+\min(b)}$ is not continuous at $v$. The continuity of $b$ clearly implies right-continuity of $t_{v+\min(b)}$ as a function of $v$. Moreover, for all $0\le v\le -\min(b)$, $b$ attains a left-local minimum at $t_{v+\min(b)}$. Letting $t^+=\lim\limits_{v'\uparrow v}t_{v'+\min(b)}$, then it follows that \[b(x)\ge v+\min(b) \mbox{ for all } x\in[t_{v+\min(b)}, t^+].\] This implies that if $t_{v+\min(b)}$ is not continuous at $v$, then $t^+>t_{v+\min(b)}$, so $b$ also attains a right-local minimum at $t_{v+\min(b)}$. This proves the lemma.
\end{proof}

For $\lambda>0$, we next collect a few properties of Brownian bridge $B^{br}_{\lambda}$ and first passage bridge $F^{br}_{\lambda}$:

\begin{lem}\label{lem: properties of brownian bridge}Brownian bridge $B^{br}_\lambda$ satisfies the following properties:

(a) Let $\tau_+=\inf\{t>0: B^{br}_{\lambda}(t)>0\},~ \tau_-=\inf\{t>0: B^{br}_{\lambda}(t)<0\}$, then almost surely $\tau_+=\tau_-=0$;

(b) Given two nonoverlapping closed intervals (which may share one common endpoint) in $[0, 1]$, the minima of $B^{br}_{\lambda}$ on these two intervals are almost surely different;

(c) Almost surely, every local minimum of $B^{br}_{\lambda}$ is a strict local minimum;

(d) The set of times where local minima are attained is countable.

Moreover, these four properties also hold for first passage bridge $F^{br}_\lambda$.
\end{lem}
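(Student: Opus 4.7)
The plan is to first prove properties (a)--(d) for standard Brownian motion $B$ on $[0,1]$ and then transfer them to $B^{br}_{\lambda}$ and $F^{br}_{\lambda}$ via the Radon--Nikodym formulas (\ref{eqn:brownian bridge change of measure}) and (\ref{eqn:first passage bridge change of measure}). Both formulas exhibit the law of the bridge restricted to an initial segment $[0,m]$ (respectively $[0,s]$) with $m<1$ as absolutely continuous with respect to Wiener measure with a strictly positive density. Consequently, every event of full Wiener measure that depends only on the path over $[0,m]$ is also of full measure under the bridge law, so the four properties restricted to $[0,m]$ automatically transfer, and one only needs a separate argument to reach the endpoint $1$.

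For $B$ itself, (a) is Blumenthal's $0$--$1$ law. For (b), given non-overlapping closed intervals $I_1=[a_1,b_1]$ and $I_2=[a_2,b_2]$ with $b_1\le a_2$, condition on $\mathcal{F}_{a_2}$: then $\min_{I_1}B$ is $\mathcal{F}_{a_2}$-measurable, while $\min_{I_2}B-B(a_2)=\min_{0\le s\le b_2-a_2}(B(a_2+s)-B(a_2))$ is independent of $\mathcal{F}_{a_2}$ with an absolutely continuous distribution (by the reflection principle), so the two minima differ almost surely. The shared-endpoint case $b_1=a_2$ reduces to the same calculation once one rules out the subcase that both minima equal $B(b_1)$, which is excluded by (a) applied to the shifted Brownian motion at $b_1$ (forcing $B$ to descend strictly below $B(b_1)$ immediately). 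Property (c) follows from (b): any non-strict local minimum at $t_0$ produces another minimizer $s_0\ne t_0$ in some neighborhood, hence rationals $r_1<r_2<r_3$ with $r_2$ separating $t_0$ and $s_0$ such that $\min_{[r_1,r_2]}B=\min_{[r_2,r_3]}B$, and a countable union over rational triples gives the claim. For (d), each strict local minimum is the unique minimizer on some open rational interval and distinct strict minima give distinct intervals.

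For $B^{br}_{\lambda}$, each of (a)--(d) is a countable union of events depending only on the path on $[0,m]$ for some $m<1$, so absolute continuity transfers them directly. Intervals or local-minimum neighborhoods reaching the endpoint $1$ are handled using the time-reversal identity $(-B^{br}_{\lambda}(1-t)-\lambda)_{t\in[0,1]}\overset{d}{=}B^{br}_{\lambda}$, which exchanges minima with maxima and sends neighborhoods of $1$ to neighborhoods of $0$; by the sign symmetry of Brownian motion the maximum versions of (a)--(d) follow from the already-established minimum versions. For $F^{br}_{\lambda}$, (\ref{eqn:first passage bridge change of measure}) handles the interior $[0,1)$ in the same way, and the endpoint is in fact easier: by construction $F^{br}_{\lambda}(1)=-\lambda$ and $F^{br}_{\lambda}(t)>-\lambda$ for all $t\in(0,1)$, so the minimum of $F^{br}_{\lambda}$ on any closed interval containing $1$ is $-\lambda$ attained only at $1$, making (b) automatic whenever one of the intervals contains $1$, making $t=1$ a strict boundary local minimum for (c), and adding at most the single point $1$ to the interior countable set in (d).

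The main technical subtlety is the shared-endpoint case of (b) for $B^{br}_{\lambda}$, where one needs (a), applied via the Markov property of the bridge at an interior time, to rule out the possibility that both one-sided minima coincide at the common endpoint. Once this is handled the rest of the argument is a routine chain of reductions: Blumenthal, reflection principle, countable unions over rational triples, and absolute continuity plus time-reversal to bring the endpoint $1$ into the scope of the $[0,m]$ argument.
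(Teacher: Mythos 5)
Your proposal is correct in substance and follows essentially the same route as the paper: transfer the four properties from standard Brownian motion to $B^{br}_\lambda$ and $F^{br}_\lambda$ on initial segments $[0,m]$, $m<1$, via the absolute continuity in \eqref{eqn:brownian bridge change of measure} and \eqref{eqn:first passage bridge change of measure}, then treat the endpoint $1$ using the time-reversal identity $(-B^{br}_\lambda(1-t)-\lambda)_{t\in[0,1]}\overset{d}{=}B^{br}_\lambda$ for the bridge and the fact that $F^{br}_\lambda$ attains its minimum $-\lambda$ only at $1$; the only presentational difference is that you reprove the Brownian-motion facts that the paper simply cites from M\"orters--Peres. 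Two small points. First, your endpoint reduction for (b) is stated as ``reversal sends neighborhoods of $1$ to neighborhoods of $0$,'' but for the pair $[0,b]$, $[s,1]$ the reversed pair $[1-b,1]$, $[0,1-s]$ again has an interval touching $1$, so that case is not literally covered as written; the paper closes it differently (and your tools give the same fix in one line): by (a) applied to the reversed bridge, $B^{br}_\lambda$ a.s.\ takes values strictly below $-\lambda=B^{br}_\lambda(1)$ immediately before time $1$, so the minimum over $[s,1]$ is a.s.\ attained at some $t\neq 1$ and equals the minimum over $[s,1-1/n]$ for large $n$, reducing to the interior case. Second, the parenthetical claim that the densities in \eqref{eqn:brownian bridge change of measure}--\eqref{eqn:first passage bridge change of measure} are strictly positive is not accurate for $F^{br}_\lambda$ (the density vanishes on paths dipping below $-\lambda$), but this is harmless since you only use the direction bridge-law $\ll$ Wiener measure, which is all that absolute continuity requires.
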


\begin{proof}
First note that the four properties are satisfied by a standard Brownian motion $B$ (e.g. see Theorem 2.8 and Theorem 2.11 in \cite{MortersPeres2010}). Let $C_n$ be the set of functions $f\in C[0,1]$ such that all four properties in the lemma occur up to time $1-1/n$ (i.e. the restriction of $f$ on $[0, 1-1/n]$ satisfies all four properties). Then $\p{B\in C_n}=1$ for all $n\in\mathbb{N}$. By equation (\ref{eqn:brownian bridge change of measure}) and equation (\ref{eqn:first passage bridge change of measure}) we know that the law of $B^{br}_\lambda$ and the law of $F^{br}_\lambda$ are both absolutely continuous with respect to the law of $B$ up to time $1-1/n$. Hence we must have $\p{B^{br}_\lambda\in C_n}=\p{F^{br}_\lambda\in C_n}=1$ for any $n\in\mathbb{N}$. This immediately implies that properties (a), (c) and (d) hold for $B^{br}_\lambda$ and $F^{br}_\lambda$. It also implies (b), except for the case where one of the intervals has the form $[s,1]$ and the minimum on $[s, 1]$ is reached at 1. For $F^{br}_\lambda$, by definition the global minimum $-\lambda$ is uniquely achieved at 1, hence the minimum on $[s,1]$ will not be the same as the minimum on any nonoverlapping interval. For $B^{br}_\lambda$, consider $\tilde{B}_\lambda(t)=-B^{br}_\lambda(1-t)-\lambda$, then $\tilde{B}_\lambda\overset{d}{=}B^{br}_\lambda$, so $\tilde{B}_\lambda$ almost surely takes positive values on any interval $[0,\epsilon]$ by property (a). It follows that $\min\limits_{t\in[s,1]}B^{br}_\lambda(t)$ is almost surely achieved at some $t\neq 1$. This completes the proof. 
\end{proof}

\begin{lem}\label{lem: continuity of rotation}
Let $\nu$ be $Unif[0, \lambda]-$distributed and independent of $B^{br}_{\lambda}$. Then the function $f: \mathcal{C}_0(1)\times[0, \infty)\to \mathcal{C}_0(1)$ satisfies
$\p{f \mbox{ is continuous at } (B^{br}_{\lambda}, \nu)}=1$.
\end{lem}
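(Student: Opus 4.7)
The plan is to factor the continuity of $f(b,v) = \theta_{u(b,v)}(b)$ (with $u(b,v) := \inf\{t: b(t) \le \min b + v\}$) into (i) joint continuity of the map $(b,u) \mapsto \theta_u(b)$, and (ii) continuity of $u$ at $(B^{br}_\lambda, \nu)$. Point (i) will follow immediately from Lemma~\ref{lem:boils down to shift at close place} combined with the trivial estimate $\|\theta_u(b) - \theta_u(b')\|_\infty \le 2\|b-b'\|_\infty$ and the triangle inequality, so the real work is in (ii).

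To handle (ii), I will construct an almost sure event $E$ on which two conditions hold simultaneously: (a) $\nu \in (0, -\min B^{br}_\lambda)$ strictly, so that $L_0 := \min B^{br}_\lambda + \nu$ lies strictly between $\min B^{br}_\lambda$ and $0$; and (b) $B^{br}_\lambda$ attains no local minimum at level $L_0$. For (a), one has $-\min B^{br}_\lambda \ge \lambda \ge \nu \ge 0$ automatically, and the two boundary cases $\nu = 0$ and $\nu = \lambda = -\min B^{br}_\lambda$ are both null events, since $\nu$ has a continuous distribution and is independent of $B^{br}_\lambda$. For (b), Lemma~\ref{lem: properties of brownian bridge}(d) gives that $B^{br}_\lambda$ almost surely attains only countably many local minima, so the set of local-minimum \emph{values} of $B^{br}_\lambda$ is a.s.\ countable; by independence of $\nu$ and Fubini, $\nu + \min B^{br}_\lambda$ a.s.\ avoids this countable random set of levels.

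On $E$, the continuity of $u$ at $(B^{br}_\lambda, \nu)$ will follow from the standard first-passage-time continuity argument. Writing $u_0 = u(B^{br}_\lambda, \nu)$, continuity of $B^{br}_\lambda$ together with (a) forces $u_0 \in (0,1)$, $B^{br}_\lambda(u_0) = L_0$, and $B^{br}_\lambda(t) > L_0$ strictly for $t < u_0$; condition (b) then prevents $u_0$ from also being a right-local-min, so for every $\delta > 0$ there exists $t^\star \in (u_0, u_0 + \delta)$ with $B^{br}_\lambda(t^\star) < L_0$ strictly. This produces the upper-semicontinuity estimate for perturbations $(b_n, v_n) \to (B^{br}_\lambda, \nu)$, while compactness on $[0, u_0 - \delta]$ gives a uniform strict positive lower bound of $B^{br}_\lambda - L_0$ there, yielding matching lower semicontinuity. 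Both estimates transfer to the sequence because $b \mapsto \min b$ is $1$-Lipschitz in sup-norm, so the perturbed levels $L(b_n, v_n)$ converge to $L_0$.

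The main obstacle is precisely condition (b): the argument hinges on converting the countable time-indexed set of local minima from Lemma~\ref{lem: properties of brownian bridge}(d) into a measure-zero set of \emph{levels} that the independent uniform $\nu$ must avoid, a Fubini step where both the countability input and the independence of $\nu$ and $B^{br}_\lambda$ are essential. Once (b) is in hand, everything else is routine first-passage-time continuity.
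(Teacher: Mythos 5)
Your proposal is correct and follows essentially the same route as the paper: you reduce discontinuity of $f$ to discontinuity of the first-passage time, which can only happen if $B^{br}_{\lambda}$ has a local minimum at the level $\nu+\min B^{br}_{\lambda}$, and you rule this out almost surely via the countability of local-minimum values (Lemma \ref{lem: properties of brownian bridge}(d)) together with the independence and continuous law of $\nu$ --- precisely the paper's combination of Lemma \ref{lem: local min} with the Fubini step. The only difference is presentational: you make the joint continuity in $(b,v)$ explicit (the sup-norm estimate for $\theta_u$ and the two semicontinuity bounds for the hitting time, plus the a.s.\ exclusion of the boundary cases $\nu=0$ and $\nu=-\min B^{br}_{\lambda}$), which the paper handles implicitly through Lemma \ref{lem: local min}, stated there for fixed $b$.
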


\begin{proof}
By Lemma \ref{lem: local min}, we have $$\p{f \mbox{ is not continuous at } (B^{br}_{\lambda}, \nu)}\le \p{ B^{br}_{\lambda} \mbox{ attains a local minimum at } t_{\nu+\min(B^{br}_{\lambda})}}$$

Let $M=\{u\in[0, 1]: B^{br}_\lambda \mbox{ attains local minimum at } u\}$ and let $\tilde{M}=\{B^{br}_\lambda(u) : u\in M\}$. By Lemma \ref{lem: properties of brownian bridge}, $M$ is countable, hence $\tilde{M}$ is countable.

Next note that $\p{ B^{br}_{\lambda} \mbox{ attains a local minimum at } t_{\nu+\min(B^{br}_{\lambda})}}\le \p{\nu+\min(B^{br}_\lambda)\in \tilde{M}}$. Moreover, $\nu$ is a continuous random variable, independent of $B^{br}_\lambda$, so the last probability equals zero.
\end{proof}

Now we are ready to give the proof of Theorem \ref{thm:walk convergence}.

\begin{proof}[Proof of Theorem \ref{thm:walk convergence}]
For each $\kappa\ge 1$ let $\nu_{\kappa}$ be a uniformly random element of $[\ck]-1$ independent of $\pi_\kappa$, and let $\nu$ be $Unif[0, \lambda]$ and independent of $B^{br}_{\lambda}$. By Corollary \ref{cor:dfswalkofforest}, \[f(\widetilde{W}_\kappa,\frac{\nu_{\kappa}}{\sigma_{\kappa}{\nk}^{1/2}})=f\left(\frac{W_{\pi(d(\sk))}(t\nk)}{\sigma_{\kappa}{\nk}^{1/2}}, \frac{\nu_{\kappa}}{\sigma_{\kappa}{\nk}^{1/2}}\right)\overset{d}{=}\left(\frac{S_{\mathbb{F}_\kappa}(t\nk)}{\sigma_{\kappa}{\nk}^{1/2}}\right)_{t\in[0,1]}.\] By Proposition \ref{prop:walk convergence}, we have $\widetilde{W}_\kappa\overset{d}{\to} B^{br}_{\lambda}$, and clearly we have ${\sigma_{\kappa}}^{-1}{\nk}^{-1/2}\nu_{\kappa}\overset{d}{\to}\nu$. By independence we have $(\widetilde{W}_\kappa, {\sigma_{\kappa}}^{-1}{\nk}^{-1/2}\nu_{\kappa})\overset{d}{\to}(B^{br}_{\lambda}, \nu)$. Since by Lemma \ref{lem: continuity of rotation} we have \[\p{f \mbox{ is continuous at } (B^{br}_{\lambda}, \nu)}=1,\] we can apply the mapping theorem (e.g. Theorem 2.7 in \cite{Billingsley1999}) to conclude that $$f(\widetilde{W}_\kappa, {\sigma_{\kappa}}^{-1}{\nk}^{-1/2}\nu_{\kappa})\overset{d}{\to}f(B^{br}_{\lambda}, \nu).$$ By Theorem \ref{thm: BCP 2003}, $F^{br}_{\lambda}\overset{d}{=} f(B^{br}_{\lambda}, \nu)$, hence we conclude that \[\left(\frac{S_{\mathbb{F}_\kappa}(t\nk)}{\sigma_{\kappa}{\nk}^{1/2}}\right)_{t\in[0,1]}\overset{d}{\rightarrow}F^{br}_\lambda,\] as required.
\end{proof}


Now we begin with the preparation work to prove Proposition \ref{prop:length convergence}. We define the map $h: \mathcal{C}_0(1)\rightarrow l^{\downarrow}_1$ such that for $g\in\mathcal{C}_0(1),~ h(g)$ equals to the decreasing ordering of excursion length of $g(s)-\min\limits_{0\le s' < s}g(s')$. (we append at most countably many zeros to make $h(g)$ an element of $l^{\downarrow}_1$). Define $h_k: \mathcal{C}_0(1)\rightarrow \mathbb{R}^k$ as $h_k=\pi_k\circ h$ where $\pi_k: l^{\downarrow}_1\rightarrow \mathbb{R}^k$ is the projection onto the subspace spanned by the first $k$ coordinates.
%
%
To prove Proposition \ref{prop:length convergence}, we use the following result from \cite{ChassaingLouchard2002}.
\begin{lem}\label{lem: criteria for deterministic functions}[Lemma 3.8 and Corollary 3.10 in \cite{ChassaingLouchard2002}]
Suppose $\zeta:[0, 1]\rightarrow\mathbb{R}$ is continuous. Let $E$ be the set of non-empty intervals of $I=(l,r)$ such that \[\zeta(l)=\zeta(r)=\min\limits_{s\le l}\zeta(s),\ \ \ \ \zeta(s)>\zeta(l) \ \ \mbox{for }l<s<r. \]Suppose that for all intervals $(l_1, r_1), (l_2, r_2)\in E$ with $l_1<l_2$, we have \begin{equation}\label{eqn: condition in lemma}\zeta(l_1)>\zeta(l_2).\end{equation} Suppose also that the complement of $\cup_{I\in E}I$ has Lebesgue measure 0. Fix functions $(\zeta_m, m\ge 1)$ such that $\zeta_m\rightarrow \zeta$ uniformly on $[0,1]$, and real numbers $(t_{m,i},~ m, i\ge 1)$ which satisfy the following:

\ \ (i)\ \ $0=t_{m,0}<t_{m,1}<\cdots<t_{m,k}=1$;

\ \ (ii)\ $\zeta_m(t_{m,i})=\min\limits_{u\le t_{m,i}}\zeta_m(u)$;

\ \ (iii) $\lim_m \max_i (\zeta_m(t_{m,i})-\zeta_m(t_{m, i+1}))=0$.


Then the vector consisting of decreasingly ranked elements of $\{t_{m, i}-t_{m, i-1}: 1\le i\le k\}$ (attaching zeroes if necessary to make the vector an element in $\mathbb{R}^{|E|}$) converges componentwise and in $l_1$ to the vector consisting of decreasingly ranked elements of $\{r-l: (l,r)\in E\}$.
\end{lem}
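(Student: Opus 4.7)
The plan is to set up a correspondence between the long partition intervals $[t_{m,i-1}, t_{m,i}]$ and the excursion intervals in $E$, then upgrade componentwise convergence to $\ell_1$ convergence using that both sorted-length sequences sum to $1$.

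\textbf{Lower bound.} Fix $(l,r) \in E$ of length $\ell = r - l$. The hypothesis gives $\zeta(l) = \min_{s \le l} \zeta(s)$ and $\zeta > \zeta(l)$ on $(l,r)$. Condition (\ref{eqn: condition in lemma}) forces $l$ to be a \emph{strict} running minimum of $\zeta$: indeed, if $\zeta(s_0) = \zeta(l)$ for some $s_0 < l$, then by continuity one could extract an earlier excursion interval with left-endpoint value equal to $\zeta(l)$, contradicting (\ref{eqn: condition in lemma}). Using uniform convergence $\zeta_m \to \zeta$, together with (ii) and (iii), I would show that for $m$ large there are indices $i(m)$ with $t_{m,i(m)-1} \to l$ and $t_{m,i(m)} \to r$: condition (iii) ensures that the running-minimum value of $\zeta_m$ is tracked within $o(1)$, so since $\zeta_m$ attains a value within $o(1)$ of $\zeta(l)$ near $l$ (by uniform convergence) and returns to within $o(1)$ of that value near $r$ but stays above it (with a uniform gap) in the middle, the consecutive running-minimum times of $\zeta_m$ enclosing this behavior must be near $l$ and $r$.

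\textbf{Upper bound.} Conversely, suppose a subsequence of intervals $[t_{m_k, j_k-1}, t_{m_k, j_k}]$ has lengths $\ge \delta > 0$, and after a further subsequence $t_{m_k, j_k-1} \to l^*$, $t_{m_k, j_k} \to r^*$. By (ii) and uniform convergence, $\zeta(l^*)$ and $\zeta(r^*)$ are both running-minimum values of $\zeta$, and by (iii) they coincide; meanwhile $\zeta \ge \zeta(l^*)$ on $[l^*, r^*]$ from the limit of the running-minimum property of $\zeta_m$. If $\zeta > \zeta(l^*)$ strictly throughout $(l^*, r^*)$, then $(l^*, r^*) \in E$. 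Otherwise there is some $s_0 \in (l^*, r^*)$ with $\zeta(s_0) = \zeta(l^*)$; then one finds two distinct excursion intervals of $\zeta$ inside $[l^*, r^*]$ whose left endpoints share the value $\zeta(l^*)$, directly violating (\ref{eqn: condition in lemma}). So the subsequential limit is an element of $E$.

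\textbf{Combining and upgrading.} The two directions establish that, for each fixed $j$, the $j$-th largest of $\{t_{m,i} - t_{m,i-1}\}$ converges to $e_j := $ the $j$-th largest of $\{r - l : (l,r) \in E\}$, where ties are broken consistently. The assumption that $[0,1] \setminus \bigcup_{I \in E} I$ has Lebesgue measure $0$ gives $\sum_j e_j = 1$, and the $t_{m,i}$ partition $[0,1]$ so $\sum_i (t_{m,i} - t_{m,i-1}) = 1$. Componentwise convergence of nonnegative sequences with equal finite sums upgrades to $\ell_1$ convergence by a Scheffé-type argument.

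\textbf{Main obstacle.} The delicate step is the upper bound: ruling out that a subsequential limit $[l^*, r^*]$ is a union of two or more excursion intervals of $\zeta$. This is precisely what condition (\ref{eqn: condition in lemma}) is designed to preclude, and without it the sorted lengths could genuinely fail to match. A secondary technicality is showing that \emph{every} long partition interval contributes (not just countably many), which requires a compactness argument over all subsequences.
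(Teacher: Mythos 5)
First, a point of comparison: the paper does not prove this lemma at all --- it is imported verbatim from Lemma 3.8 and Corollary 3.10 of \cite{ChassaingLouchard2002} and used as a black box --- so there is no in-paper argument to measure you against; what you have written is an outline of the standard proof (match each excursion of $\zeta$ with a partition interval of the $t_{m,i}$, show every long partition interval subsequentially limits onto an element of $E$, then upgrade ranked componentwise convergence to $\ell_1$ via equal sums and a Scheff\'e argument). That architecture is correct, the final bookkeeping (disjointness of the intervals in $E$ under (\ref{eqn: condition in lemma}), partition lengths summing to $1$, excursion lengths summing to $1$ by the measure-zero hypothesis) is fine, and your upper-bound dichotomy is essentially the right one.

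Two steps, however, would fail as you have justified them. In the lower bound, the fact that $\zeta_m$ comes back to within $o(1)$ of $\zeta(l)$ near $r$ while exceeding it by a uniform gap on the middle of $(l,r)$ does \emph{not} by itself place a partition point near $r$: nothing you say prevents the first partition time after the excursion from lying far beyond $r$. What forces it is a strict decrease of the running minimum just after $r$: for every $\delta>0$ one needs $\min_{u\le r+\delta}\zeta(u)<\zeta(r)$, so that consecutive partition points straddling $[r-\delta,r+\delta]$ would exhibit a value drop bounded away from $0$, contradicting (iii). Establishing this (and likewise that $\zeta>\zeta(l)$ strictly on $[0,l)$, which you assert from (\ref{eqn: condition in lemma}) alone) requires the measure-zero hypothesis in addition to (\ref{eqn: condition in lemma}): condition (\ref{eqn: condition in lemma}) rules out a second excursion beginning at the level $\zeta(r)$, but it does not rule out $\zeta$ having a plateau at that level (before $l$ or after $r$), nor a terminal stretch on which $\zeta$ stays strictly above the level and never returns; those cases are excluded only because such a stretch meets no interval of $E$ and would give the complement of $\bigcup_{I\in E}I$ positive measure. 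The same omission occurs in your upper bound: if $\zeta(s_0)=\zeta(l^*)$ for some $s_0\in(l^*,r^*)$, you obtain two distinct excursions at the same level only when the level set $\{s\in[l^*,r^*]:\zeta(s)=\zeta(l^*)\}$ has at least two complementary gaps; if that set contains an interval there may be only one gap (or none), and the contradiction must again come from the measure-zero hypothesis rather than from (\ref{eqn: condition in lemma}). Both repairs are routine with the stated hypotheses, so your plan is sound, but as written these are genuine gaps rather than mere technicalities.
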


\begin{lem}\label{lem: no same new min}
Let $\mathcal{E}$ be the set of excursions $\gamma$ of $ F^{br}_{\lambda}(s)-\min\limits_{0\le s'<s}  F^{br}_{\lambda}(s')$. Then almost surely for all $\gamma_1, \gamma_2\in\mathcal{E}$ with $l(\gamma_1)<l(\gamma_2)$, we have $F^{br}_{\lambda}(l(\gamma_1))> F^{br}_{\lambda}(l(\gamma_2))$.
\end{lem}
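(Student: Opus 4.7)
The plan is to argue by contradiction, invoking Lemma~\ref{lem: properties of brownian bridge}(b). I would first record the weak half of the inequality: at a left endpoint $l(\gamma)$ of an excursion of $F^{br}_\lambda(\cdot)-\min_{s'<\cdot}F^{br}_\lambda(s')$, the value $F^{br}_\lambda(l(\gamma))$ equals the running minimum of $F^{br}_\lambda$ up to that time, and since the running minimum is non-increasing we always have $F^{br}_\lambda(l(\gamma_1)) \ge F^{br}_\lambda(l(\gamma_2))$ whenever $l(\gamma_1) < l(\gamma_2)$. It therefore suffices to rule out equality.

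Suppose for contradiction that $l(\gamma_1) < l(\gamma_2)$ with $F^{br}_\lambda(l(\gamma_1)) = F^{br}_\lambda(l(\gamma_2))$. Since the running minimum is constant on each excursion interval, the excursion property gives $F^{br}_\lambda(s) > F^{br}_\lambda(l(\gamma_i))$ for $s \in (l(\gamma_i), r(\gamma_i))$ and $i=1,2$. Combining this with the equality hypothesis and the monotonicity of the running minimum shows that $F^{br}_\lambda(s) \ge F^{br}_\lambda(l(\gamma_1))$ for all $s \in [0, l(\gamma_2)]$, with the value $F^{br}_\lambda(l(\gamma_1))$ attained at both $l(\gamma_1)$ and $l(\gamma_2)$.

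I would then choose rationals $q_1 \in \mathbb{Q} \cap (l(\gamma_1), r(\gamma_1))$ and $q_2 \in \mathbb{Q} \cap (l(\gamma_2), r(\gamma_2))$; these exist by density, and disjointness of distinct excursion intervals forces $q_1 \le l(\gamma_2) < q_2$. A short case check then shows that $F^{br}_\lambda$ attains its minimum on each of the two closed intervals $[0, q_1]$ and $[q_1, q_2]$ at the common value $F^{br}_\lambda(l(\gamma_1))$, realised at $l(\gamma_1)$ and at $l(\gamma_2)$ respectively, while at the shared endpoint $q_1$ one has $F^{br}_\lambda(q_1) > F^{br}_\lambda(l(\gamma_1))$.

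To finish, Lemma~\ref{lem: properties of brownian bridge}(b) asserts that for any fixed pair of nonoverlapping closed intervals, the minima of $F^{br}_\lambda$ are almost surely distinct. Taking a countable union over pairs of rationals $0 \le q < q' \le 1$ yields a full-measure event on which the minima of $F^{br}_\lambda$ on $[0,q]$ and $[q,q']$ disagree for every such rational pair simultaneously, and on this event the construction above produces a contradiction. The main subtlety is precisely this quantifier inversion: since Lemma~(b) applies pairwise, one must draw $q_1, q_2$ from a fixed countable set, and what makes this possible is the strict positivity of $F^{br}_\lambda - F^{br}_\lambda(l(\gamma_i))$ on the open excursion intervals $(l(\gamma_i), r(\gamma_i))$, which supplies open windows of admissible rational endpoints rather than only isolated ones.
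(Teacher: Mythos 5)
Your proposal is correct and follows essentially the same route as the paper: argue by contradiction, reduce equality of $F^{br}_{\lambda}(l(\gamma_1))$ and $F^{br}_{\lambda}(l(\gamma_2))$ to equal minima of $F^{br}_{\lambda}$ on two nonoverlapping closed intervals with rational endpoints, and conclude via Lemma~\ref{lem: properties of brownian bridge}(b) together with a countable union over rational pairs. Your choice of intervals $[0,q_1]$, $[q_1,q_2]$ (versus the paper's $[a,b]$, $[b,c]$ with a third rational $a<l(\gamma_1)$) and your explicit handling of the quantifier inversion are only cosmetic differences, and your added care that the rational endpoints lie inside the open excursion intervals is exactly the point the paper leaves implicit.
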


\begin{proof}
Suppose to the contrary that for some $\gamma_1, \gamma_2\in\mathcal{E}$ with $l(\gamma_1)<l(\gamma_2)$, we have $F^{br}_{\lambda}(l(\gamma_1))\le F^{br}_{\lambda}(l(\gamma_2))$, then since $\gamma_1, \gamma_2$ are excursions of $F^{br}_{\lambda}(s)-\min\limits_{0\le s'<s} F^{br}_{\lambda}(s')$, we must in fact have $F^{br}_{\lambda}(l(\gamma_1))= F^{br}_{\lambda}(l(\gamma_2))$. In this case then we can find $a, b, c\in\mathbb{Q}$ such that $a< l(\gamma_1)<b<l(\gamma_2)<c$, and $F^{br}_{\lambda}$ achieves the same minima (at $l(\gamma_1)$ and $l(\gamma_2)$ respectively) on $[a, b]$ and $[b, c]$. This has probability zero by Lemma \ref{lem: properties of brownian bridge} (b).
\end{proof}

To prove the next lemma, we introduce the following notation. Let $(S_{1/2}(\lambda), 0\le\lambda<\infty)$ denote a stable subordinator of index 1/2, which is the increasing process with stationary independent increments such that 
\[\e{\exp{(-\theta S_{1/2}(\lambda))}}=\exp{(-\lambda\sqrt{2\theta})}, \ \ \ \ \theta,\lambda\ge 0,\]
\[\p{S_{1/2}(1)\in dx}=(2\pi)^{-1/2}x^{-3/2}\exp{(-\frac{1}{2x})}dx, \ \ \ \ x>0.\]

\begin{lem}\label{lem: positive excursion length}
Almost surely, the coordinates of $h(F^{br}_{\lambda})$ sum to 1, and are all strictly positive.
\end{lem}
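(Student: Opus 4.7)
The plan is to identify the ranked excursion-length sequence of the reflected process $R(s) := F^{br}_\lambda(s) - \min_{0 \le s' < s} F^{br}_\lambda(s')$ with the ranked jump sequence of a stable-$1/2$ subordinator on $[0, \lambda]$, and then read off both claims from standard properties of such subordinators.

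The starting observation is that for a standard Brownian motion $B$, the first passage process $\tau_y := \inf\{t : B(t) < -y\}$ is a stable-$1/2$ subordinator with L\'evy measure $c\,x^{-3/2}\,dx$ on $(0, \infty)$, and there is a natural bijection between excursions of $B - \min_{s' \le \cdot} B(s')$ and jumps of $\tau$: the jump at level $y$ of size $\Delta\tau_y = \tau_y - \tau_{y-}$ corresponds to the excursion interval $(\tau_{y-}, \tau_y)$. Since $F^{br}_\lambda$ is informally the law of $B$ conditioned on $\tau_\lambda = 1$, the excursion lengths of $R$ are informally the jumps of $(\tau_y)_{0 \le y \le \lambda}$ under this conditioning. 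The sum-to-$1$ claim then corresponds to the fact that $\tau$ is pure jump, so its jumps up to level $\lambda$ sum to $\tau_\lambda = 1$; the strict positivity and infinitude of all coordinates correspond to the fact that the L\'evy measure has infinite mass in every neighbourhood of $0$, so $\tau$ has infinitely many (strictly positive) jumps on $[0, \lambda]$.

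The main obstacle is that the conditioning event $\{\tau_\lambda = 1\}$ has probability zero, so the identification with a conditioned subordinator cannot be used naively. I would bypass this entirely via the change-of-measure formula (\ref{eqn:first passage bridge change of measure}): for every $s < 1$, the law of $F^{br}_\lambda$ restricted to $[0, s]$ is absolutely continuous with respect to the law of $B$ restricted to the event $\{\inf_{r \le s} B(r) > -\lambda\}$. Hence any a.s.\ property of $B - \min_{r \le \cdot} B(r)$ that holds on that restricted event transfers to $R$ on $[0, s]$. The two Brownian facts I would invoke, both standard, are that almost surely $B - \min B$ has (i) a zero set of Lebesgue measure zero, and (ii) infinitely many excursions on any bounded interval; both transfer to $R$ on $[0, s]$ for each $s < 1$, and taking a countable union along $s \uparrow 1$ extends them to $[0, 1)$. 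The singleton $\{1\}$ is Lebesgue null, so (i) implies that $\{R > 0\} \cap [0,1]$ has full measure and decomposes into countably many open excursion intervals with lengths summing to $1$, while (ii) implies that infinitely many such intervals are nondegenerate, so every coordinate of $h(F^{br}_\lambda)$ is strictly positive.
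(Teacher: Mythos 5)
Your argument is correct, and it takes a genuinely different route from the paper. The paper proves this lemma by an identification in law: by Proposition 5 of \cite{BCP2003}, $h(F^{br}_{\lambda})$ is distributed as the ranked excursion lengths of a Brownian bridge conditioned to have local time $\lambda$ at $0$, equivalently as the rescaled ranked jumps of a stable-$1/2$ subordinator conditioned at time $1$; the sum-to-one statement is then quoted from Lemma 10 of \cite{AldousPitman1998}, and strict positivity is deduced afterwards by a contradiction argument using $F^{br}_{\lambda}(1)=-\lambda<0$. You instead argue pathwise: you use the absolute continuity of the law of $F^{br}_{\lambda}$ on $[0,s]$, $s<1$, with respect to Wiener measure (formula (\ref{eqn:first passage bridge change of measure})) to transfer two standard facts about $B-\min B$ (Lebesgue-null zero set; infinitely many excursions in any interval), exactly the transfer mechanism the paper itself employs in Lemma \ref{lem: properties of brownian bridge}. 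This avoids the conditioned-subordinator machinery entirely and yields positivity directly, rather than as a by-product of the sum-to-one identity; what it does not give is the exact law of $h(F^{br}_{\lambda})$, which the paper's route provides but which is not needed for this lemma.

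One detail should be added to make the sum-to-one step complete. Writing $R(s)=F^{br}_{\lambda}(s)-\min_{0\le s'<s}F^{br}_{\lambda}(s')$, knowing that $\{R>0\}$ has full Lebesgue measure identifies the total length of the maximal open components of $\{R>0\}$ as $1$; but a component whose right endpoint is $1$ is an excursion in the sense of Section \ref{subsection:statement of main thm} only if $R(1)=0$, and otherwise its length would not be counted in $h(F^{br}_{\lambda})$, so the coordinates could sum to strictly less than $1$. This is easily supplied with the tools you already use: the indicator in (\ref{eqn:first passage bridge change of measure}) gives, for each fixed $s<1$, that almost surely $F^{br}_{\lambda}(t)>-\lambda$ for all $t\le s$, hence almost surely for all $t<1$ simultaneously; combined with continuity and $F^{br}_{\lambda}(1)=-\lambda$ this yields $\inf_{t<1}F^{br}_{\lambda}(t)=-\lambda$, i.e.\ $R(1)=0$, so every component of $\{R>0\}$ is a genuine excursion interval and the lengths indeed sum to $1$.
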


\begin{proof}
By Proposition 5 of \cite{BCP2003}, $h(F^{br}_{\lambda})$ has the law of the vector of ranked excursion lengths of $|B^{br}|$ conditioned to have total local time $\lambda$ at 0, which in turn has the same law as ranked excursion lengths of Brownian bridge conditioned to have total local time $\lambda$ at 0 (this vector has the same law as the random vector $Y(\lambda)$ in \cite{AldousPitman1998}, see equation (36) there). The latter is distributed as the scaled ranked jump sizes of the stable subordinator $S_{1/2}(\cdot)$ conditioned to be $\frac{1}{\lambda^2}$ at time 1 (e.g. see Theorem 4 in \cite{AldousPitman1998}). By Lemma 10 in \cite{AldousPitman1998}, the coordinates of $h(F^{br}_{\lambda})$ almost surely sum to 1. This immediately implies that the stable subordinator almost surely has infinitely many jumps, so almost surely all coordinates of $h(F^{br}_{\lambda})$ are strictly positive. Indeed, suppose to the contrary that the excursion intervals are $(l_1, r_1), \cdots, (l_k, r_k)$, where $r_i\le l_{i+1}, 1\le i\le k-1$. Then since $\sum\limits_{i=1}^k (r_i-l_i)=1$, we must in fact have $r_i=l_{i+1}, \forall 1\le i\le k-1$ and $l_1=0, r_k=1$. But this implies that $0=F^{br}_{\lambda}(l_1)=F^{br}_{\lambda}(r_1)=F^{br}_{\lambda}(l_2)=\cdots=F^{br}_{\lambda}(l_k)=F^{br}_{\lambda}(r_k)=F^{br}_{\lambda}(1)$, contradicting to the fact $F^{br}_{\lambda}(1)=-\lambda<0$.
\end{proof}

\begin{proof}[Proof of Proposition \ref{prop:length convergence}]
We first prove that for any fixed $j\ge 1$,
\begin{equation}\label{eqn: truncated tree sizes convergence} (|\mathbb{T}_{\kappa, l}|/\nk)_{1\le l\le j}\overset{d}{\rightarrow}(|\gamma_l|)_{1\le l \le j}.\end{equation}

Let $\zeta_\kappa=\left(\frac{S_{\mathbb{F}_\kappa}(t\nk)}{\sigma_\kappa{\nk}^{1/2}}\right)_{t\in[0,1]}$ and let $\zeta=\left(F^{br}_{\lambda}(t)\right)_{t\in[0,1]}$. By (\ref{eqn: walk convergence}) and by Skorokhod's representation theorem, we may work in a probability space in which $\zeta_\kappa\overset{a.s.}{\rightarrow}\zeta$. Let $E$ be the set of excursion intervals of $\zeta$. Then Lemma \ref{lem: no same new min} guarantees equation (\ref{eqn: condition in lemma}) in Lemma \ref{lem: criteria for deterministic functions} is true and Lemma \ref{lem: positive excursion length} guarantees that the complement of $\cup_{I\in E}I$ has Lebesgue measure 0, as required by Lemma \ref{lem: criteria for deterministic functions}. For each $\kappa$ let $t_{\kappa,0}=0$ and for $1\le j\le \ck$ let $t_{\kappa, j}$ be such that $\nk t_{\kappa, j}$ is the time the depth-first walk $S_{\mathbb{F}_\kappa}$ finishes visiting the $j-$th tree of $\mathbb{F}_\kappa$. Then almost surely, condition (i) of Lemma \ref{lem: criteria for deterministic functions} is clearly true and condition (iii) is also true since for each $1\le j\le \ck,~ \zeta_\kappa(t_{\kappa, j})=\zeta_\kappa(t_{\kappa, j-1})-\frac{1}{\sigma_\kappa\nk^{1/2}}$. The definition of Lukasiewicz walk guarantees that the times at which $\frac{S_{\mathbb{F}_\kappa}(t\nk)}{\sigma_\kappa{\nk}^{1/2}}$ hits a new minimum coincide with the times at which the walk finishes exploring the trees of the forest. Hence almost surely condition (ii) of Lemma \ref{lem: criteria for deterministic functions} is also satisfied. Also note that the vector consisting of decreasingly ranked elements of $\{t_{\kappa,j}-t_{\kappa,j-1}, 1\le j\le\ck \}$ is simply the scaled decreasing ordering of tree component sizes $(|\mathbb{T}_{\kappa,l}|/\nk)_{1\le l\le \ck}$. Hence by Lemma \ref{lem: criteria for deterministic functions} we know that  \[(|\mathbb{T}_{\kappa, l}|/\nk)_{1\le l\le j}\overset{a.s.}{\rightarrow}h_j(F^{br}_{\lambda})\] which immediately implies weak convergence. Lemma \ref{lem: positive excursion length} guarantees that this is true for any positive integer $j$. We also have $h_j(F^{br}_{\lambda})\overset{d}{=}(|\gamma_l|)_{1\le l\le j}$ by definition, and (\ref{eqn: truncated tree sizes convergence}) follows.

To prove (\ref{eqn: tree sizes convergence}) from (\ref{eqn: truncated tree sizes convergence}), we only need to prove that for any $\epsilon>0$, there exists $I_0\in\mathbb{N}$ such that $\limsup\limits_{\kappa\to\infty}\p{\sum\limits_{l> I_0}\frac{|\mathbb{T}_{\kappa,l}|}{\nk}>\epsilon}<\epsilon$. Since by Lemma \ref{lem: positive excursion length} we have $\sum\limits_l |\gamma_l|=1$ almost surely, in particular, $\lim\limits_{I\rightarrow \infty}\p{\sum\limits_{l>I}|\gamma_l|>\epsilon}=0$. So there exists $I_0$ such that $\p{\sum\limits_{l>I_0}|\gamma_l|>\epsilon}<\epsilon/2$. Let $A_\kappa$ be the event that $\sum\limits_{l\le I_0}\frac{|\mathbb{T}_{\kappa,l}|}{\nk}<1-\epsilon$ and $A$ be the event that $\sum\limits_{l\le I_0}|\gamma_l|<1-\epsilon$   (which has probability less than $\epsilon/2$ by our choice of $I_0$). By (\ref{eqn: truncated tree sizes convergence}), we have $|\p{A_\kappa}-\p{A}|<\epsilon/2$ for $\kappa$ large enough. Therefore
\begin{eqnarray*}
\limsup\limits_{\kappa\rightarrow\infty}\p{\sum\limits_{l>I_0}\frac{|\mathbb{T}_{\kappa,l}|}{\nk}>\epsilon} &=& \limsup\limits_{\kappa\rightarrow\infty}\p{A_\kappa}\\
&\le& \p{A}+ \limsup\limits_{\kappa\rightarrow\infty}|\p{A_\kappa}-\p{A}|\le\epsilon/2+\epsilon/2=\epsilon,
\end{eqnarray*}
as required.
\end{proof}

\section{Proof of Proposition \ref{prop: first j converge} and Proposition \ref{prop:diameter of small trees}}\label{sec:proof of prop6 and lem10}

We assume that we have the conditions of Theorem \ref{thm:forest of trees} hold. In particular, we have a probability distribution $\textbf{p}$ on $\mathbb{N}$. Recall that $\sigma=\sigma(\textbf{p}), \sigma_\kappa=\sigma(\textbf{p}_\kappa)$.
Let $\mathbf{s}_{\kappa,l}=(s^{(i)}_{\kappa,l}, i\ge 0)$ denote the degree sequence of $\mathbb{T}_{\kappa,l}$ and let $\mathbf{n}_{\kappa, l}=n(\mathbf{s}_{\kappa,l})$. Recall that $p_{\kappa}^{(i)}=s_\kappa^{(i)}/\nk$ and let $p_{\kappa,l}^{(i)}=s^{(i)}_{\kappa,l}/\mathbf{n}_{\kappa, l}$ be the empirical proportion of degree $i$ among all vertices of the $l-$th largest tree $\mathbb{T}_{\kappa, l}$. Note that $p_{\kappa}^{(i)}$ is deterministic while $p_{\kappa, l}^{(i)}$ is random. 

First, we are going to prove Proposition \ref{prop: first j converge} by using Theorem \ref{thm:Broutin-Marckert}. To do so, we will have to first show that the assumptions of Theorem \ref{thm:Broutin-Marckert} are satisfied in our setting.



\begin{prop}\label{prop:proportion and second moments convergence}
Under the assumption of Theorem \ref{thm:forest of trees}, for all $l\ge 1$, as $\kappa\rightarrow\infty$ we have

(a) $\textbf{p}_{\kappa,l}\overset{p}{\rightarrow} \textbf{p}$ coordinatewise, that is, $p^{(i)}_{\kappa,l}\overset{p}{\rightarrow} p^{(i)}$ for all $i\ge 1$.

(b) $\sigma(\textbf{p}_{\kappa,l})\overset{p}{\rightarrow}\sigma(\textbf{p})$.    
\end{prop}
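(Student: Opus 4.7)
The idea is that both parts reduce to two facts: the concentration inequality for degree proportions along cyclic segments of a uniform permutation (Proposition \ref{prop: bad_characterization} combined with Lemma \ref{lem: bad event small}), and the fact that Proposition \ref{prop:length convergence} together with Lemma \ref{lem: positive excursion length} guarantees $|\mathbb{T}_{\kappa,l}|/\nk$ is bounded away from $0$ in probability for each fixed $l$.

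For part (a), fix $i\ge 1$ and $\epsilon>0$. Applying Proposition \ref{prop: bad_characterization} with $\mathbf{s}=\mathbf{s}_\kappa$ and $n=\nk$, together with Lemma \ref{lem: bad event small}, gives
\[
\mathbb{P}\Bigl(\bigcup_{l:\,|\mathbb{T}_{\kappa,l}|>\nk^{1/4}}\{|p^{(i)}_{\kappa,l}-p^{(i)}_\kappa|>\epsilon\}\Bigr)\le \nk\cdot \nk^{-3}=\nk^{-2}\longrightarrow 0
\]
as $\kappa\to\infty$. Meanwhile, Proposition \ref{prop:length convergence} and Lemma \ref{lem: positive excursion length} yield $|\mathbb{T}_{\kappa,l}|/\nk\xrightarrow{d}|\gamma_l|$ with $|\gamma_l|>0$ almost surely, so $\mathbb{P}(|\mathbb{T}_{\kappa,l}|\le \nk^{1/4})\to 0$ for every fixed $l$. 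Combining these two facts shows $|p^{(i)}_{\kappa,l}-p^{(i)}_\kappa|\xrightarrow{p}0$, and since by hypothesis $p^{(i)}_\kappa\to p^{(i)}$, part (a) follows.

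For part (b), I would split the second moment at a truncation level $K$:
\[
\sigma^2(\mathbf{p}_{\kappa,l})=\sum_{i=1}^{K}i^2 p^{(i)}_{\kappa,l}+\sum_{i>K}i^2 p^{(i)}_{\kappa,l}.
\]
The head is a finite sum, so by part (a) it converges in probability to $\sum_{i\le K}i^2 p^{(i)}$ for any fixed $K$. For the tail, the key observation is the deterministic bound $\sum_{i>K}i^2 s^{(i)}_{\kappa,l}\le \sum_{i>K}i^2 s^{(i)}_\kappa$, hence
\[
\sum_{i>K}i^2 p^{(i)}_{\kappa,l}\le \frac{\nk}{|\mathbb{T}_{\kappa,l}|}\sum_{i>K}i^2 p^{(i)}_\kappa.
\]
The assumption $\sigma(\mathbf{p}_\kappa)\to\sigma(\mathbf{p})\in(0,\infty)$ together with coordinatewise convergence $p^{(i)}_\kappa\to p^{(i)}$ implies $\sum_{i>K}i^2 p^{(i)}_\kappa\to\sum_{i>K}i^2 p^{(i)}$, which can be made arbitrarily small by choosing $K$ large (since $\sigma^2(\mathbf{p})<\infty$). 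Moreover, Proposition \ref{prop:length convergence} and $|\gamma_l|>0$ a.s.\ ensure that $\nk/|\mathbb{T}_{\kappa,l}|$ is bounded in probability. Given $\eta>0$, I will first fix $\delta>0$ with $\mathbb{P}(|\gamma_l|<\delta)<\eta$, then choose $K$ so that $\sum_{i>K}i^2 p^{(i)}<\delta\eta$, then take $\kappa$ large so that $\sum_{i>K}i^2 p^{(i)}_\kappa<2\delta\eta$ and $\mathbb{P}(|\mathbb{T}_{\kappa,l}|<\delta\nk/2)<2\eta$. On the resulting good event the tail is at most $4\eta$, and the head converges by part (a) to $\sum_{i\le K}i^2 p^{(i)}$, which differs from $\sigma^2(\mathbf{p})$ by at most $\delta\eta$. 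Letting $\eta\downarrow 0$ concludes (b).

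The only mildly delicate point is ensuring uniformity of the tail estimate, which is why I quantify the choice of $\delta$ (to control $\nk/|\mathbb{T}_{\kappa,l}|$) before selecting the truncation level $K$. All other steps are direct applications of the already-established machinery.
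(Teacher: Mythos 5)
Your proposal is correct and follows essentially the same route as the paper: part (a) combines Lemma \ref{lem: bad event small} and Proposition \ref{prop: bad_characterization} with the fact that $|\mathbb{T}_{\kappa,l}|/\mathbf{n}_\kappa$ converges to the a.s.\ positive $|\gamma_l|$, and part (b) uses the same truncation of the second moment together with the deterministic bound $\sum_{i>K} i^2 p^{(i)}_{\kappa,l}\le \frac{\mathbf{n}_\kappa}{|\mathbb{T}_{\kappa,l}|}\sum_{i>K} i^2 p^{(i)}_\kappa$. The only differences are cosmetic (ordering of the quantifiers in the tail estimate and the exact polynomial rate in part (a)), so no changes are needed.
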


\begin{proof}
For (a), we know that by Lemma \ref{lem: bad event small} and Proposition \ref{prop: bad_characterization}, for fixed $\epsilon>0, i,l\in\mathbb{N}$ and $\kappa$ large enough, we have \begin{equation}\label{eqn:bad event small_kappa version}
\p{|p_{\kappa, l}^{(i)}-p^{(i)}_\kappa|>\epsilon}\le 1/\nk+\p{|\mathbb{T}_{\kappa,l}|\le \nk^{1/4}}.
\end{equation} 

For any $\epsilon'>0$, there exists $\delta>0$ such that $\p{|\gamma_l|<\delta}<\epsilon'/2$ and by (\ref{eqn: truncated tree sizes convergence}) we can find $\kappa_0$ such that for all $\kappa\ge\kappa_0$ we have $\p{\frac{|\mathbb{T}_{\kappa,l}|}{\nk}<\delta}\le\p{|\gamma_l|<\delta}+\epsilon'/2$ and $\nk^{-3/4}<\delta$. Hence $\p{|\mathbb{T}_{\kappa,l}|\le \nk^{1/4}}=\p{\frac{|\mathbb{T}_{\kappa,l}|}{\nk}\le \nk^{-3/4}}\le\p{\frac{|\mathbb{T}_{\kappa,l}|}{\nk}\le \delta}<\epsilon'$. Hence $\p{|\mathbb{T}_{\kappa,l}|\le \nk^{1/4}}=o(1)$ as $\kappa\rightarrow\infty$. Therefore by (\ref{eqn:bad event small_kappa version}) we know that $|p^{(i)}_{\kappa,l}-p^{(i)}_\kappa|\overset{p}{\rightarrow} 0$ as $\kappa\rightarrow \infty$, which implies (a) since by assumption of Theorem \ref{thm:forest of trees} we have $\textbf{p}_\kappa$ converges to $\textbf{p}$ coordinatewise.

Now we proceed to prove (b). Fix $l\ge 1$ and $\delta>0$, and let $\epsilon>0$ be small enough that $$\limsup\limits_{\kappa\rightarrow\infty} \p{|\mathbb{T}_{\kappa,l}|<\epsilon\nk}<\delta.$$ Such $\epsilon$ exists by (\ref{eqn: truncated tree sizes convergence}).

Then let $M$ be large enough that $\sigma^2_{\kappa, >M}:=\sum\limits_{i>M}i^2 \frac{s_\kappa^{(i)}}{\nk}<\epsilon^2$ for all $\kappa$ (such $M$ exists since under the assumption of Theorem \ref{thm:forest of trees} $\sigma^2_\kappa$ converges). And let $\sigma^2_{\kappa,l,>M}=\sum\limits_{i>M}i^2 \frac{s^{(i)}_{\kappa,l}}{\mathbf{n}_{\kappa,l}}$ similarly. Note that $$\sigma^2_{\kappa,l,>M}\le \sum\limits_{i>M}i^2 \frac{s^{(i)}_\kappa}{|\mathbb{T}_{\kappa,l}|}=\sigma^2_{\kappa, >M}\frac{\nk}{|\mathbb{T}_{\kappa,l}|},$$ so if $\sigma^2_{\kappa,l,>M}>\epsilon$ then $|\mathbb{T}_{\kappa,l}|<\epsilon \nk$. By the triangle inequality, we have $$|\sigma^2(\textbf{p}_{\kappa,l})-\sigma^2(\textbf{p}_\kappa)|\le \sum\limits_{i\le M}i^2 |p^{(i)}_{\kappa,l}-p^{(i)}_\kappa|+\sum\limits_{i>M}i^2p^{(i)}_{\kappa,l}+\sum\limits_{i>M}i^2p^{(i)}_\kappa.$$
Since $|p^{(i)}_{\kappa,l}- p^{(i)}_\kappa|\rightarrow 0$ in probability for all $i$ by part (a), and $\sum\limits_{i>M} i^2 p^{(i)}_\kappa<\epsilon^2<\epsilon$, and $\sigma(\textbf{p}_\kappa)\rightarrow\sigma(\textbf{p})$ by assumption of Theorem \ref{thm:forest of trees}, this yields that $$\limsup\limits_{\kappa\rightarrow\infty}\p{|\sigma^2(\textbf{p}_{\kappa,l})-\sigma^2(\textbf{p})|>4\epsilon}\le \limsup\limits_{\kappa\rightarrow\infty}\p{\sum\limits_{i>M}i^2 p^{(i)}_{\kappa,l}>\epsilon}\le \limsup\limits_{\kappa\rightarrow\infty}\p{|\mathbb{T}_{\kappa,l}|<\epsilon\nk}<\delta,$$ which proves part (b).
\end{proof}

\begin{lem}\label{lem: max degree ok}
Let $\Delta_{\kappa,l}$ be the largest degree of a vertex of $\mathbb{T}_{\kappa,l}$. For any fixed $l$, we have \[\frac{\Delta_{\kappa,l}}{\sqrt{|\mathbb{T}_{\kappa,l}|}}\overset{p}{\rightarrow} 0 \mbox{ as } \kappa\to\infty.\]
\end{lem}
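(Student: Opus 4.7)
The plan is to combine two facts already established in the paper: the deterministic bound $\Delta_\kappa = o(\nk^{1/2})$ from Remark \ref{rmk:degree assumption}, and the lower bound on the size of $\mathbb{T}_{\kappa,l}$ that comes from Proposition \ref{prop:length convergence} together with Lemma \ref{lem: positive excursion length}.

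First I would fix $\eta,\delta>0$. By Lemma \ref{lem: positive excursion length}, $|\gamma_l|>0$ almost surely, so I can choose $\epsilon_0>0$ small enough that $\mathbf{P}(|\gamma_l|<\epsilon_0)<\delta/2$. Proposition \ref{prop:length convergence} implies that $|\mathbb{T}_{\kappa,l}|/\nk \xrightarrow{d} |\gamma_l|$, and the set $\{x:x\le\epsilon_0\}$ is closed, so by the portmanteau theorem
\[
\limsup_{\kappa\to\infty}\mathbf{P}\!\left(\frac{|\mathbb{T}_{\kappa,l}|}{\nk}\le \epsilon_0\right)\le \mathbf{P}(|\gamma_l|\le \epsilon_0)<\delta/2.
\]
In particular, for all $\kappa$ large enough, $\mathbf{P}(|\mathbb{T}_{\kappa,l}|<\epsilon_0\nk)<\delta$.

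On the complementary event $\{|\mathbb{T}_{\kappa,l}|\ge \epsilon_0\nk\}$, I would use the trivial but crucial bound $\Delta_{\kappa,l}\le \Delta_\kappa$ (the maximum degree in any sub-forest cannot exceed the maximum degree of the whole forest) to get
\[
\frac{\Delta_{\kappa,l}}{\sqrt{|\mathbb{T}_{\kappa,l}|}}\le \frac{\Delta_\kappa}{\sqrt{\epsilon_0\,\nk}}.
\]
By Remark \ref{rmk:degree assumption}, $\Delta_\kappa=o(\nk^{1/2})$, so the right-hand side tends to $0$ as $\kappa\to\infty$, and is therefore less than $\eta$ for all $\kappa$ sufficiently large. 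Putting the two estimates together gives
\[
\limsup_{\kappa\to\infty}\mathbf{P}\!\left(\frac{\Delta_{\kappa,l}}{\sqrt{|\mathbb{T}_{\kappa,l}|}}>\eta\right)\le \delta,
\]
and since $\delta$ was arbitrary the conclusion follows.

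There is no real obstacle here; the argument is essentially a combination of an existing deterministic upper bound on $\Delta_\kappa$ with a probabilistic lower bound on $|\mathbb{T}_{\kappa,l}|$. The only point where one must be a little careful is the application of weak convergence to get a lower tail bound on $|\mathbb{T}_{\kappa,l}|/\nk$, but this is immediate from the portmanteau theorem together with the strict positivity of $|\gamma_l|$ supplied by Lemma \ref{lem: positive excursion length}.
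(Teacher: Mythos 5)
Your proposal is correct and follows essentially the same route as the paper: bound $\Delta_{\kappa,l}\le\Delta_\kappa=o(\nk^{1/2})$ (Remark \ref{rmk:degree assumption}) and combine this with the lower bound on $|\mathbb{T}_{\kappa,l}|/\nk$ obtained from its convergence in distribution to $|\gamma_l|$, which is strictly positive by Lemma \ref{lem: positive excursion length}. The only cosmetic point is the $<$ versus $\le$ issue in the portmanteau step, which is handled by choosing $\epsilon_0$ so that $\p{|\gamma_l|\le\epsilon_0}<\delta/2$ (possible since $|\gamma_l|>0$ almost surely), exactly as the paper implicitly does.
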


\begin{proof}
For any $\delta>0$, we need to prove $\lim\limits_{\kappa\to\infty}\p{\frac{\Delta_{\kappa,l}}{\sqrt{|\mathbb{T}_{\kappa,l}|}}>\delta}=0$. For any $\epsilon>0$, by Lemma \ref{lem: positive excursion length} we can choose $\epsilon'>0$ such that $\p{|\gamma_l|<\epsilon'}\le \epsilon/2$. Then choose $\kappa_0$ such that when $\kappa\ge\kappa_0$ we have \[\frac{\Delta^2_\kappa}{\nk}\cdot\frac{1}{\delta^2}<\epsilon' \mbox{ and } \p{\frac{|\mathbb{T}_{\kappa,l}|}{\nk}<\epsilon'}\le\p{|\gamma_l|<\epsilon'}+\frac{\epsilon}{2}.\] This is possible since $\Delta_\kappa=o(\nk^{1/2})$ by Remark \ref{rmk:degree assumption} and $|\mathbb{T}_{\kappa, l}|/\nk\overset{d}{\rightarrow} |\gamma_l|$ by (\ref{eqn: truncated tree sizes convergence}). Therefore \[\p{\frac{\Delta_{\kappa,l}}{\sqrt{|\mathbb{T}_{\kappa,l}|}}>\delta}\le \p{\frac{\Delta_\kappa}{\sqrt{|\mathbb{T}_{\kappa,l}|}}>\delta}=\p{\frac{|\mathbb{T}_{\kappa,l}|}{\nk}<\frac{\Delta^2_\kappa}{\nk}\cdot\frac{1}{\delta^2}}\le \p{\frac{|\mathbb{T}_{\kappa,l}|}{\nk}<\epsilon'}\le\epsilon,\] hence the claim.
\end{proof}

With Proposition \ref{prop:proportion and second moments convergence} and Lemma \ref{lem: max degree ok}, we are now ready to give the proof of Proposition \ref{prop: first j converge}.

\begin{proof}[Proof of Proposition \ref{prop: first j converge}]
Let $\mathbf{s}_{\kappa,l}$ be the random degree sequence of the $l-$th largest tree in the forest $\mathbb{F}_\kappa$. Then by Proposition \ref{prop:length convergence}, we have \[\left(\frac{n(\mathbf{s}_{\kappa,1})}{\nk},\cdots,\frac{n(\mathbf{s}_{\kappa,j})}{\nk}\right)\overset{d}\rightarrow \left(|\gamma_1|, \cdots, |\gamma_j|\right).\] By Proposition \ref{prop:proportion and second moments convergence} and Lemma \ref{lem: max degree ok}, we know we can apply Theorem \ref{thm:Broutin-Marckert} to $\mathcal{T}_{\kappa,l}$ to conclude that for each fixed $l\le j$, \[\frac{\nk^{1/2}}{n(\mathbf{s}_{\kappa,l})^{1/2}}\mathcal{T}_{\kappa,l}\overset{d}\rightarrow \mathcal{T}_{\mathbf{e}_l}\] where $(\mathbf{e}_l)_{l\le j}$ are independent copies of $\mathbf{e}$. Since the trees $(\mathcal{T}_{\kappa,l}, l\le j)$ are conditionally independent given their degree sequences, it follows that \[\left(\frac{\nk^{1/2}}{n(\mathbf{s}_{\kappa,l})^{1/2}}\mathcal{T}_{\kappa,l}, l\le j\right)\overset{d}\rightarrow\left(\mathcal{T}_{\mathbf{e}_l}, l\le j\right).\] The result follows by Brownian scaling.
\end{proof}


Finally, we give the proof of Proposition \ref{prop:diameter of small trees} based on Proposition \ref{prop:key_prop}, with the assumptions of Theorem \ref{thm:forest of trees_2}.

\begin{proof}[Proof of Proposition \ref{prop:diameter of small trees}]
By assumption we have $\sigma_\kappa\rightarrow\sigma\in(0,\infty)$ and $s^{(1)}_\kappa/|\sk|\rightarrow p^{(1)}<1$. Fix $\rho>0$ and let $\epsilon>0$ be such that $2\epsilon<\sigma^2<\frac{1}{2\epsilon}$. Then let $\beta_0=\beta_0(\rho,\epsilon)$ be as in Proposition \ref{prop:key_prop}, so that for all $n$ sufficiently large, if a degree sequence $\mathbf{s}$ satisfies $|\mathbf{s}|=n, \Delta(\mathbf{s})\le n^{\frac{1-\epsilon}{2}}, s^{(1)}\le (1-\epsilon)|\mathbf{s}|$ and $\epsilon\le \sigma^2(\mathbf{s})/n\le 1/\epsilon$, then for any $0<\beta<\beta_0$,  \[\p{\exists T\in\mathbb{F}(\textbf{s}): |T|<\beta n, h(T)>\beta^{1/8} n^{1/2}}\le \rho.\] For $\kappa$ sufficiently large, $\sk$ satisfies these conditions. Hence for any $0<\beta<\beta_0$, \begin{equation}\label{eqn: to ref in proof}
\p{\exists T\in\mathbb{F}(\sk): |T|<\beta\nk, h(T)>\beta^{1/8}\nk^{1/2}}\le \rho.
\end{equation}
Finally, taking $\beta=(a/\sigma_\kappa)^8$ in (\ref{eqn: to ref in proof}), since $\mathcal{T}_{\kappa, l}=\frac{\sigma_\kappa}{2\nk^{1/2}}\mathbb{T}_{\kappa,l}$ and for all $j>1/\beta$ we have $|\mathbb{T}_{\kappa,j}|<\beta\nk$, it follows that for all $\kappa$ sufficiently large, \begin{eqnarray*}
\p{\sup\limits_{l>j}h(\mathbb{T}_{\kappa,l})>\frac{a\nk^{1/2}}{\sigma_\kappa}}&\le& \p{\exists T\in\mathbb{F}(\sk): |T|<\beta\nk, h(T)>\beta^{1/8}\nk^{1/2}}\\
&\le& \rho.
\end{eqnarray*}
Since $\mathrm{diam}(\mathcal{T}_{\kappa,l})\le 2 h(\mathcal{T}_{\kappa,l})$, the result now follows easily. 
\end{proof}
\section{acknowledgements}
I would like to thank Louigi Addario-Berry for suggesting this project and numerous helpful discussions thereafter. This work was partially supported by NSERC CGS and I thank the institution.

\appendix
\section{proof of remark \ref{rmk:degree assumption}}
Let's restate remark \ref{rmk:degree assumption} as the following lemma.

\begin{lem}\label{lem:appendix degree assumption}
Suppose distributions $\textbf{p}_\kappa$ converges to $\textbf{p}$ coordinatewise and $\sigma(\textbf{p}_\kappa)\rightarrow \sigma(\textbf{p})\in(0,\infty)$ and $\frac{c(\sk)}{\nk^{1/2}}\rightarrow x\in(0,\infty)$, then $\mu(\textbf{p}_\kappa)\rightarrow \mu(\textbf{p})=1$ and $\Delta_\kappa/{\nk}^{1/2}\rightarrow 0$ as $\kappa\rightarrow\infty$.
\end{lem}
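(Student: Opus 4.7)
The plan is to handle the two conclusions separately, using in both cases the combination of coordinatewise convergence $\textbf{p}_\kappa\to\textbf{p}$ and convergence of second moments $\sigma^2(\textbf{p}_\kappa)\to\sigma^2(\textbf{p})\in(0,\infty)$ as a form of uniform integrability.

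First, for $\mu(\textbf{p}_\kappa)\to 1$, I would use the identity
\[
c(\sk)=\sum_{i\ge 0}(1-i)s_\kappa^{(i)}=\nk-\sum_{i\ge 0}i s_\kappa^{(i)}=\nk\bigl(1-\mu(\textbf{p}_\kappa)\bigr),
\]
so that $\mu(\textbf{p}_\kappa)=1-c(\sk)/\nk$. Since by hypothesis $c(\sk)/\nk^{1/2}\to x\in(0,\infty)$ and $\nk\to\infty$, we get $c(\sk)/\nk\to 0$, hence $\mu(\textbf{p}_\kappa)\to 1$.

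To then transfer the limit to $\mu(\textbf{p})$, I would establish uniform integrability of the first moments from the second-moment control. Specifically, for any $\epsilon>0$, pick $M$ large enough that $\sup_\kappa \sigma^2(\textbf{p}_\kappa)/M<\epsilon$ (possible since $\sigma^2(\textbf{p}_\kappa)$ is bounded); then
\[
\sum_{i>M} i\,p_\kappa^{(i)}\le \frac{1}{M}\sum_{i>M} i^2 p_\kappa^{(i)}\le\frac{\sigma^2(\textbf{p}_\kappa)}{M}<\epsilon
\]
uniformly in $\kappa$. Splitting $\mu(\textbf{p}_\kappa)=\sum_{i\le M} i p_\kappa^{(i)}+\sum_{i>M} i p_\kappa^{(i)}$, coordinatewise convergence handles the finite sum and the uniform bound controls the tail, yielding $\mu(\textbf{p})=\lim_\kappa \mu(\textbf{p}_\kappa)=1$.

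For $\Delta_\kappa/\nk^{1/2}\to 0$, I would argue by contradiction. Suppose there exist $\delta>0$ and a subsequence (still written $\kappa$) along which $\Delta_\kappa\ge\delta\nk^{1/2}$; note $\Delta_\kappa\to\infty$. Since $s_\kappa^{(\Delta_\kappa)}\ge 1$, for any fixed $M$, once $\Delta_\kappa>M$ we have
\[
\sum_{i>M} i^2 p_\kappa^{(i)}\ge \Delta_\kappa^2\cdot\frac{s_\kappa^{(\Delta_\kappa)}}{\nk}\ge \delta^2.
\]
On the other hand, since $\sigma^2(\textbf{p}_\kappa)\to\sigma^2(\textbf{p})$ and $\sum_{i\le M} i^2 p_\kappa^{(i)}\to\sum_{i\le M} i^2 p^{(i)}$ by coordinatewise convergence, subtracting gives $\sum_{i>M} i^2 p_\kappa^{(i)}\to\sum_{i>M} i^2 p^{(i)}$ for each fixed $M$. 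Combining, $\sum_{i>M} i^2 p^{(i)}\ge\delta^2$ for every $M$, contradicting $\sigma^2(\textbf{p})<\infty$ (which forces the tails to vanish as $M\to\infty$). The main (only) subtlety is the order of quantifiers in the contradiction argument, but fixing $M$ first and then letting $\kappa\to\infty$ makes this routine.
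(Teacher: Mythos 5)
Your proposal is correct and follows essentially the same route as the paper: the identity $\mu(\textbf{p}_\kappa)=1-c(\sk)/\nk$ for the first claim, second-moment tail control (uniform integrability) to pass the limit to $\mu(\textbf{p})$, and the observation that a vertex of degree at least $\delta\nk^{1/2}$ forces the tail $\sum_{i>M} i^2 p_\kappa^{(i)}\ge\delta^2$, which is incompatible with $\sigma^2(\textbf{p}_\kappa)\to\sigma^2(\textbf{p})<\infty$. The only cosmetic differences are that you argue the maximum-degree bound by contradiction along a subsequence while the paper argues it directly via $\limsup_\kappa \Delta_\kappa/\nk^{1/2}\le\epsilon^{1/2}$ for arbitrary $\epsilon$, and your tail bound uses $i\le i^2/M$ rather than $i\le i^2$; both are immaterial.
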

\begin{proof}
First, since $0\le\mu(\textbf{p})=\sum ip^{(i)}\le\sum i^2 p^{(i)}=\sigma^2(\textbf{p})<\infty$, we have $\mu(\textbf{p})\in(0,\infty)$. And we can compute the limit of $\mu(\textbf{p}_\kappa)$ explicitly: $$\mu(\textbf{p}_\kappa)=\sum ip^{(i)}_\kappa=\sum i\frac{s^{(i)}_\kappa}{\nk}=\frac{\nk-\ck}{\nk}\rightarrow 1$$ by our assumption of the magnitude of $\ck$.

Next, since $\mathbf{p}_\kappa\rightarrow \mathbf{p}$ coordinatewise, for all $M\in\mathbb{N}$ we have
\[\lim\limits_{\kappa\rightarrow\infty}|\sum\limits_{i\le M} i p_{\kappa}^{(i)}-\sum\limits_{i\le M} i p^{(i)}|=0.\] 
It follows that
\begin{eqnarray*}
\limsup\limits_{\kappa\rightarrow\infty} |\sum ip^{(i)}_\kappa-\sum ip^{(i)}| &=& \lim\limits_{M\rightarrow\infty}\limsup\limits_{\kappa\rightarrow\infty} |\sum\limits_{i\ge M} ip^{(i)}_\kappa-\sum\limits_{i\ge M} ip^{(i)}|\\
&\le & \lim\limits_{M\rightarrow\infty}\limsup\limits_{\kappa\rightarrow\infty} \left(\sum\limits_{i\ge M} ip^{(i)}_\kappa+\sum\limits_{i\ge M} ip^{(i)}\right)\\
&\le & \lim\limits_{M\rightarrow\infty}\limsup\limits_{\kappa\rightarrow\infty} \left(\sum\limits_{i\ge M} i^2p^{(i)}_\kappa+\sum\limits_{i\ge M} i^2p^{(i)}\right)\\
&=& 0,
\end{eqnarray*}
where the final equality holds since $\sigma(\mathbf{p})<\infty$ and $\sigma(\mathbf{p}_\kappa)\rightarrow\sigma(\mathbf{p})$. Hence $\mu(\mathbf{p}_\kappa)\rightarrow\mu(\mathbf{p})$.

Since $\textbf{p}_\kappa\rightarrow \textbf{p}$ coordinatewise, it follows that for any integer $N$, \[\limsup\limits_{\kappa\rightarrow\infty}|\sum\limits_{i\ge N}i^2 p^{(i)}_\kappa-\sum\limits_{i\ge N}i^2 p^{(i)}|=\limsup\limits_{\kappa\rightarrow\infty}|\sigma^2(\textbf{p}_\kappa)-\sigma^2(\textbf{p})|=0.\] Now let $\epsilon>0$ and let $N$ be large enough that $0<\sum\limits_{i\ge N}i^2p^{(i)}<\epsilon.$ Then for all $\kappa$ sufficiently large, $0<\sum
\limits_{i\ge N}i^2 p^{(i)}_\kappa<\epsilon$. But $\sum\limits_{i\ge N}i^2 p^{(i)}_\kappa\ge \epsilon\mathbbm{1}_{\Delta_\kappa\ge(\epsilon\nk)^{1/2}}$, so this implies that $\limsup\limits_{\kappa\rightarrow\infty}\frac{\Delta_\kappa}{\nk^{1/2}}\le\epsilon^{1/2}$. Since $\epsilon>0$ was arbitrary, the result follows.
\end{proof}
\section{proof of Remark \ref{rmk:GHP justification}}\label{sec:GHP justification}

The following proposition will be useful for our justification of Remark \ref{rmk:GHP justification} (see Lemma 2.4 in \cite{Le Gall2005} for a version dealing with Gromov-Hausdorff distance instead of Gromov-Hausdorff-Prokhorov distance):

\begin{prop}[Proposition 2.9 in \cite{Abraham2012}]\label{prop:distance coding function}
Let $f, g$ be two compactly supported non-negative continuous functions with $f(0)=g(0)=0$. Then $$d_{GHP}(\mathcal{T}_f,\mathcal{T}_g)\le 6||f-g||_\infty+|\sigma_f-\sigma_g|.$$
\end{prop}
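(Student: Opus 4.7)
The plan is to exhibit a concrete common metric space in which both trees embed and then bound each of the three components of $d_{GHP}$ (root distance, Hausdorff distance, Prokhorov distance) separately, using the canonical correspondence furnished by the coding.

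First, without loss of generality assume $\sigma_f \le \sigma_g$, set $\sigma = \sigma_g$, and extend $f$ by zero to $[0,\sigma]$; I would then observe that this extension changes neither $\mathcal{T}_f$ as a metric space (every $t \in (\sigma_f,\sigma]$ has $d_f^{\circ}(0,t)=0$ and so projects to $\emptyset_f$) nor the measure $\mathbf{m}_f$ (whose total mass stays $\sigma_f$, since $\sigma_f$ is defined from the original $f$). Now define the canonical correspondence
\[
R \;=\; \{(p_f(t),p_g(t)) : t \in [0,\sigma]\} \;\subset\; T_f \times T_g,
\]
which contains the pair of roots $(\emptyset_f,\emptyset_g)$ at $t=0$. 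A short computation using $|m_f(s,t)-m_g(s,t)|\le \|f-g\|_\infty$ yields the distortion estimate
\[
|d_f(p_f(s),p_f(t)) - d_g(p_g(s),p_g(t))| \;=\; |d_f^{\circ}(s,t) - d_g^{\circ}(s,t)| \;\le\; 4\|f-g\|_\infty.
\]

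Next, with $\epsilon := 2\|f-g\|_\infty$, I would build $Z = T_f \sqcup T_g$ equipped with the pseudometric $D$ that agrees with $d_f$ on $T_f$, with $d_g$ on $T_g$, and satisfies
\[
D(x,y) \;=\; \inf_{(a,b)\in R}\bigl(d_f(x,a) + \epsilon + d_g(b,y)\bigr), \qquad x\in T_f,\; y\in T_g.
\]
The choice $\epsilon = 2\|f-g\|_\infty$ is precisely half the distortion bound, which is what makes the triangle inequality hold (the verification is routine: the only nontrivial case $x,z \in T_f$, $y\in T_g$ uses the distortion bound $d_f(a,a') \le d_g(b,b') + 4\|f-g\|_\infty$ for two pairs in $R$). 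After passing to the metric quotient, $T_f$ and $T_g$ embed isometrically into $Z$.

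It remains to estimate each of the three summands in $d_{GHP}$. For the root: $(p_f(0),p_g(0)) \in R$ gives $D(\emptyset_f,\emptyset_g) \le \epsilon$. For the Hausdorff distance: every $x\in T_f$ has a partner $y\in T_g$ with $(x,y)\in R$ at $D$-distance $\epsilon$, and symmetrically, so $d_H^Z(T_f,T_g)\le \epsilon$. The Prokhorov part is the main obstacle and the source of the $|\sigma_f-\sigma_g|$ term: for a closed $A\subset Z$,
\[
\mathbf{m}_f(A) \;=\; \bigl|\{t\in[0,\sigma_f]: p_f(t)\in A\}\bigr| \;\le\; \bigl|\{t\in[0,\sigma_g]: p_g(t)\in A^\epsilon\}\bigr| \;=\; \mathbf{m}_g(A^\epsilon),
\]
since $p_f(t)\in A$ forces $p_g(t)\in A^\epsilon$; in the reverse direction the same argument applied to $t \in [0,\sigma_f]$ leaves the stray interval $(\sigma_f,\sigma_g]$ unaccounted for, contributing at most $\sigma_g-\sigma_f$. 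Hence $\mathbf{m}_g(A) \le \mathbf{m}_f(A^\epsilon) + |\sigma_f-\sigma_g|$, and $d_P^Z(\mathbf{m}_f,\mathbf{m}_g) \le \max(\epsilon, |\sigma_f-\sigma_g|) \le \epsilon + |\sigma_f-\sigma_g|$. Summing the three bounds gives
\[
d_{GHP}(\mathcal{T}_f,\mathcal{T}_g) \;\le\; \epsilon + \epsilon + \epsilon + |\sigma_f-\sigma_g| \;=\; 6\|f-g\|_\infty + |\sigma_f-\sigma_g|,
\]
as required. The delicate step is the careful bookkeeping on the Prokhorov side and the verification that $D$ is a pseudometric; everything else is a direct consequence of the correspondence and its distortion estimate.
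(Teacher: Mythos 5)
Your proof is correct. Note that the paper does not prove this proposition at all --- it is imported verbatim as Proposition 2.9 of \cite{Abraham2012} --- so there is no internal argument to compare against; your route (the canonical correspondence $\{(p_f(t),p_g(t)):t\in[0,\sigma_g]\}$ with distortion at most $4\|f-g\|_\infty$, the glued pseudometric with gap $\epsilon=2\|f-g\|_\infty$, and the Lebesgue bookkeeping on $[0,\sigma_f]$ versus $[0,\sigma_g]$ that isolates the $|\sigma_f-\sigma_g|$ defect in the Prokhorov term) is the standard one and is in the same spirit as the cited proof. The only cosmetic point is that the paper's $\epsilon$-enlargements are defined with strict inequality, while your correspondence only gives $D(p_f(t),p_g(t))\le\epsilon$; running the Hausdorff and Prokhorov estimates with any $\epsilon'>\epsilon$ and letting $\epsilon'\downarrow\epsilon$ removes this quibble without affecting the final bound $3\epsilon+|\sigma_f-\sigma_g|=6\|f-g\|_\infty+|\sigma_f-\sigma_g|$.
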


Now we prove the following result.

\begin{prop}
The GH convergence in Theorem 1 in \cite{BroutinMarckert2012} can be strengthened to GHP convergence as in Theorem \ref{thm:Broutin-Marckert}.
\end{prop}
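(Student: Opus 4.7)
The plan is to piggyback on the content of Theorem~1 of \cite{BroutinMarckert2012}, whose proof establishes the uniform convergence in distribution of the rescaled contour function $C_\kappa^*(t):=\frac{\sigma_\kappa}{2\nk^{1/2}} C_\kappa(2(\nk-1)t)$ of $\mathbb{T}_\kappa$ to the standard Brownian excursion $\mathbf{e}$ in $C[0,1]$. Here $C_\kappa$ is the (unrescaled) contour function of $\mathbb{T}_\kappa$ described in the excerpt. Via Skorokhod representation one may pass to a probability space on which this convergence holds almost surely. With uniform sup-norm convergence in hand, Proposition~\ref{prop:distance coding function} (Proposition~2.9 of \cite{Abraham2012}) yields pathwise
\[
d_{GHP}(\mathcal{T}_{C_\kappa^*},\mathcal{T}_\mathbf{e}) \le 6\|C_\kappa^*-\mathbf{e}\|_\infty + |\sigma_{C_\kappa^*}-1|\longrightarrow 0 \quad\text{a.s.,}
\]
where $\mathcal{T}_{C_\kappa^*}$ carries its canonical measure $\mathbf{m}_{C_\kappa^*}$ (the pushforward of Lebesgue measure on $[0,\sigma_{C_\kappa^*}]$ by the canonical projection).

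It thus suffices to show $d_{GHP}(\mathcal{T}_\kappa, \mathcal{T}_{C_\kappa^*})\to 0$. The natural map $\phi_\kappa:\mathbb{T}_\kappa\hookrightarrow \mathcal{T}_{C_\kappa^*}$ sending each vertex to the equivalence class of a time at which the contour visits it is isometric (up to the rescaling), because the contour-function pseudo-metric between two vertex visit-times equals the rescaled graph distance between the vertices; and since every point of $\mathcal{T}_{C_\kappa^*}$ sits on an edge of length $\delta_\kappa:=\sigma_\kappa/(2\nk^{1/2})$, the image $\phi_\kappa(\mathbb{T}_\kappa)$ is a $\delta_\kappa$-net. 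This handles the Gromov-Hausdorff and root-distance contributions to the GHP distance.

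The main (routine) step is a comparison of the uniform vertex measure $\phi_{\kappa,*}\mu_\kappa$ with the canonical measure $\mathbf{m}_{C_\kappa^*}$ on the common space $\mathcal{T}_{C_\kappa^*}$. The plan is: since the contour traverses each of the $\nk-1$ edges exactly twice for a total rescaled time $1/(\nk-1)$ per edge, $\mathbf{m}_{C_\kappa^*}$ is an equal-weight superposition over the $\nk-1$ edges of the uniform linear distribution of mass $1/(\nk-1)$ along each edge. Projecting each point on an edge to, say, the endpoint closer to the root moves mass by at most $\delta_\kappa$, producing a purely atomic measure $\nu_\kappa$ that assigns mass $1/(\nk-1)$ to each non-root vertex, so $d_P(\mathbf{m}_{C_\kappa^*},\nu_\kappa)\le \delta_\kappa$. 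Since $\phi_{\kappa,*}\mu_\kappa$ assigns mass $1/\nk$ to every vertex, a direct computation gives $\|\nu_\kappa-\phi_{\kappa,*}\mu_\kappa\|_{TV}=1/\nk$, whence $d_P(\phi_{\kappa,*}\mu_\kappa,\mathbf{m}_{C_\kappa^*}) \le \delta_\kappa+1/\nk \to 0$ by the triangle inequality. Assembling the three bounds (GH, root distance, Prokhorov) gives $d_{GHP}(\mathcal{T}_\kappa,\mathcal{T}_\mathbf{e}) \to 0$ almost surely, hence in distribution, as required.

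The main (minor) obstacle is precisely this discrete-continuous measure comparison; no genuinely new probabilistic input is needed beyond the convergence of contour functions already present in \cite{BroutinMarckert2012}. The key observation making the comparison painless is that the canonical measure on $\mathcal{T}_{C_\kappa^*}$ is essentially uniform per edge (mass $1/(\nk-1)$) and each edge is of length $\delta_\kappa\to 0$, so short-distance transport to vertices costs at most $\delta_\kappa$ in Prokhorov distance.
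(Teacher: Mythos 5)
Your proposal follows essentially the same route as the paper: contour-function convergence from \cite{BroutinMarckert2012} plus Proposition 2.9 of \cite{Abraham2012} gives GHP convergence of $\mathcal{T}_{C^*_\kappa}$ to $\mathcal{T}_{\mathbf{e}}$, and then one compares the discrete tree (embedded via vertex visit times as a $\delta_\kappa$-net) and its uniform vertex measure with the continuum tree and its normalized length measure; the paper bounds the Prokhorov distance by a direct two-sided set argument, whereas you insert an intermediate atomic measure and a total-variation bound, which is an equally valid (and slightly tidier) bookkeeping. One small slip: to get an atomic measure with mass $1/(\nk-1)$ at each non-root vertex you must project the mass of each edge to its endpoint \emph{farther} from the root (the child), not the endpoint closer to the root --- the latter would give each vertex mass proportional to its number of children; with that correction your argument is complete.
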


\begin{proof}
Let $C_\kappa$ be the contour function of $\mathbb{T}_\kappa$, define $\hat{C}_\kappa:[0, 1]\rightarrow [0,\infty)$ by letting $\hat{C}_\kappa(t)=\frac{\sigma(\textbf{p}_\kappa)}{2\nk^{1/2}}C_\kappa(2(\nk-1)t)$, then it is shown in \cite{BroutinMarckert2012} (see Theorem 3 there) that $\hat{C}_\kappa\overset{d}{\rightarrow}\mathbf{e}$ in the space $C([0,1],\mathbb{R})$, equipped with the supremum distance. By Proposition \ref{prop:distance coding function} and Skorokhod's representation theorem, it follows that $\mathcal{T}_{\hat{C}_\kappa}\overset{d}{\rightarrow}\mathcal{T}_{\mathbf{e}}$ in the GHP sense.

Next, metrically we may realize $\mathcal{T}_\kappa$ as the subspace of $\mathcal{T}_{\hat{C}_\kappa}$ consisting of the set $U$ of points whose distance from the root is an integer multiple of $\frac{\sigma(\textbf{p}_\kappa)}{2\nk^{1/2}}$. With this identification \[d_H(\mathcal{T}_\kappa, \mathcal{T}_{\hat{C}_\kappa})=\frac{1}{2}\cdot\frac{\sigma(\textbf{p}_\kappa)}{2\nk^{1/2}}.\] Moreover, the measure $\hat{\mu}_\kappa$ on $\mathcal{T}_{\hat{C}_\kappa}$ is the (normalized) length measure, and the measure $\mu_\kappa$ on $\mathcal{T}_\kappa$ is the uniform measure on its points. It follows that \[d_P(\hat{\mu}_\kappa, \mu_\kappa)\le \frac{1}{\nk}+ \frac{\sigma(\textbf{p}_\kappa)}{2\nk^{1/2}}.\] 
To see this, for each $u\in U$ which is not the root of $\mathcal{T}_\kappa$, let $e_u$ be the parent edge of $u$, which we view as a closed line segment of length $\epsilon=\frac{\sigma(\textbf{p}_\kappa)}{2\nk^{1/2}}$ in $\mathcal{T}_{\hat{C}_\kappa}$. For any non-empty set $S\subset U$, we have $\mu_\kappa(S)=|S|/\nk.$ Hence \[\hat{\mu}_\kappa(S^\epsilon)\ge \frac{|S|-1}{\nk-1}\ge\mu_\kappa(S)-\frac{1}{\nk},\] where the first inequality is because for non-root $u\in S$, we have $e_u\subset S^\epsilon$. On the other hand, let $A$ be a closed set in $\mathcal{T}_{\hat{C}_\kappa}$ and let $l=|\{e\in E(\mathcal{T}_\kappa): A\cap e\ne\emptyset\}|$. Then $A^\epsilon$ contains at least $l$ vertices of $\mathcal{T}_\kappa$ since no cycle exists, so \[\mu_\kappa(A^\epsilon)\ge \frac{l}{\nk}=\frac{l}{\nk-1}-\frac{l}{\nk(\nk-1)}\ge \hat{\mu}_\kappa(A)-\frac{1}{\nk}.\]
Hence $d_{GHP}(\mathcal{T}_\kappa,\mathcal{T}_{\hat{C}_\kappa})\overset{d}{\rightarrow} 0$. 
By the triangle inequality, it follows that $\mathcal{T}_\kappa\overset{d}{\rightarrow}\mathcal{T}_{\mathbf{e}}$ in the GHP sense.
\end{proof}

\end{document}